\newtheorem{theorem}{Theorem}
\newtheorem{definition}{Definition}
\newtheorem{lemma}{Lemma}
\begin{document}
	
\title[Nonlinear Double Degenerate Parabolic Equations ]{Evolution of Interfaces for the Nonlinear Double Degenerate Parabolic Equation of Turbulent Filtration with Absorption}
	 \thanks{Department of Mathematics, Florida Institute of Technology, Melbourne, FL 32901}

	\author[U. G. Abdulla]{Ugur G. Abdulla}
	\address{Department of Mathematics, Florida Institute of Technology, Melbourne, FL 32901}
	\email{abdulla@fit.edu}	
	
	\author[J. Du]{Jian Du}	
           \address{Department of Mathematics, Florida Institute of Technology, Melbourne, FL 32901}
	\email{jdu@fit.edu}	
		
	\author[A. Prinkey]{Adam Prinkey}

           \author[C. Ondracek]{Chloe Ondracek} 		
				
	\author[S. Parimoo]{Suneil Parimoo}

	\begin{abstract}
 We prove the short-time asymptotic formula for the interfaces and local solutions near the interfaces for the nonlinear double degenerate reaction-diffusion equation of turbulent filtration with strong absorption
\[ u_t=\Big(|(u^{m})_x|^{p-1}(u^{m})_x\Big)_x-bu^{\beta}, \, mp>1, \, \beta >0. \]
Full classification is pursued in terms of the nonlinearity parameters  $m, p,\beta$ and asymptotics of the initial function near its support. Numerical analysis using a weighted essentially nonoscillatory (WENO) scheme with interface capturing is implemented, and comparison of numerical and analytical results is presented.
\end{abstract}

	\maketitle

	\section{Introduction}
\label{sec:intro}

Consider the Cauchy problem (CP) for the nonlinear double degenerate parabolic equation: 
\begin{equation}\label{OP1}
Lu\equiv u_t-\Big(|(u^{m})_x|^{p-1}(u^{m})_x\Big)_x+bu^{\beta} = 0, \ x\in \mathbb{R}, ~0<t<T,
\end{equation}
\begin{equation}\label{IF1}
u(x,0)=u_0(x),~~x\in \mathbb{R},
\end{equation} 
where $m, p, b, \beta >0, ~mp>1,~0<T\leq +\infty,$ and $u_0$~is nonnegative and continuous. Equation \eqref{OP1} arises in turbulent polytropic filtration of a gas in porous media \cite{Barenblatt1, Est-Vazquez, Leibenson}. Under the condition $mp>1$, the PDE \eqref{OP1} posesses finite speed of propagation property. Assume that $\eta(0)=0$, where $\eta(\cdot)$ be an interface, or free boundary defined as
\[\eta(t):=\text{sup}\{x:u(x,t)>0\}.\]
Let
\begin{equation}\label{IF2}
u_0(x)\sim C(-x)_+^{\alpha},\, \text{as} \, x\rightarrow 0^{-},\, \text{for some} \, C>0,\, \alpha>0.
\end{equation}
The goal of this paper is to present full classification of the short time behavior of the interface $\eta$, and local solution near $\eta$ in terms of parameters $m,p,b, \beta, C,$ and $\alpha$. 
Our estimations will be global in time in the special case when
\begin{equation}\label{IF3}
u_0(x)=C(-x)_+^{\alpha},\, x\in \mathbb{R},
\end{equation}
and the minimal solution to the problem \eqref{OP1}, \eqref{IF3} is of self-similar form.

The initial development of interfaces and structures of local solutions near the interfaces is very well understood in the case of the reaction-diffusion equations with porous medium ($p=1$ in \eqref{OP1}) and $p$-Laplacian ($m=1$ in \eqref{OP1}) type diffusion terms.
Full classification of the evolution of interfaces and the local behaviour of solutions near the interfaces for the reaction-diffusion equations with porous medium type diffusion ($p=1$ in \eqref{OP1})  was presented in \cite{Abdulla1} for the case of slow diffusion ($m>1$), and in \cite{Abdulla3} for the fast diffusion case ($0<m<1$). Similar classification for the reaction-diffusion equations with $p$-Laplacian type diffusion ($m=1$ in \eqref{OP1}) is presented in a recent paper \cite{AbdullaJeli}. 

The organization of the paper is as follows: In \cref{sec: description of the main results} we outline the main results. For clarity of the exposure we describe further technical details of the main results in \cref{sec: details of the main results}. In \cref{sec: preliminary results} we apply nonlinear scaling techniques for some preliminary estimations which are necessary for the proof of main results. In \cref{sec: proofs of the main results.} we prove the main results. Finally, in \cref{sec: numerical solution} we confirm our analytical results from \cref{sec: description of the main results} numerically by implementing a WENO scheme. 
We provide explicit values of some of constants in \cref{sec: appendix}, and numerical graphs in \cref{sec:fig}.\\ 

	\section{Main Results}\label{sec: description of the main results} 
There are four different subcases, as shown in \cref{fig:figure 1}. The main results are the following:
\begin{figure}
\centering
\includegraphics[width = 0.7\textwidth]{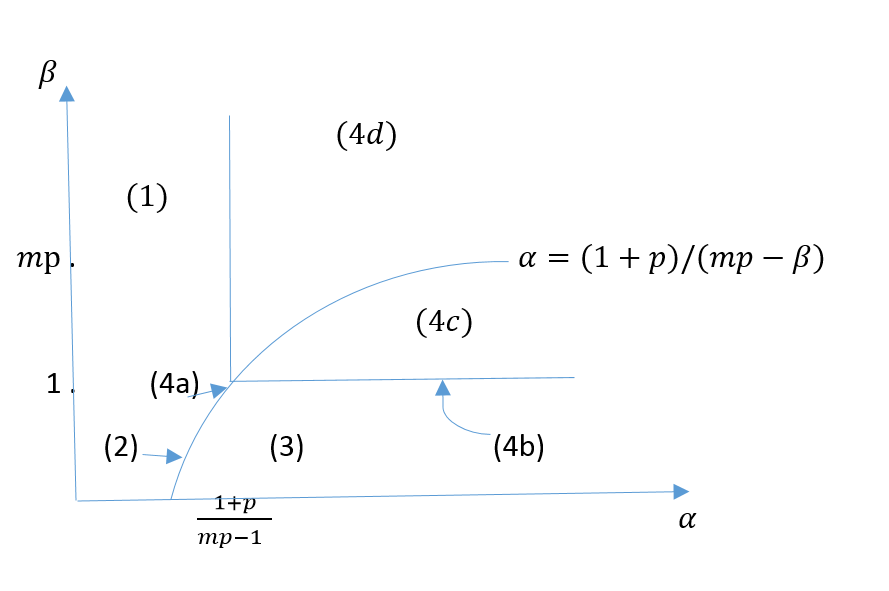}
\caption{Classification of different cases in the $(\alpha, \beta)$-parameter space for interface development in problem \eqref{OP1}-\eqref{IF3}.}
\label{fig:figure 1}
\end{figure}
\begin{theorem}\label{theorem 1}
If $\alpha < \frac{1+p}{mp- \text{min}\{1,\beta\}}$,  then the interface initially expands and
\begin{equation}\label{I1}
\eta(t)\sim \xi_* t^{1/(1+p-\alpha(mp-1))},~\text{as}~t\rightarrow 0^{+},
\end{equation}
where
\begin{equation}\label{XI1}
\xi_*=C^{\frac{mp-1}{1+p-\alpha(mp-1)}}\xi'_*,
\end{equation}
and $\xi'_*,$~is a positive number depending only on~$m$,~$p$, and $\alpha$. For arbitrary $\rho<\xi_*,$ there exists a positive number $f(\rho)$~depending on $C, m, p, $~and~$\alpha$ such that:
\begin{equation}\label{U1}
u(x,t)\sim t^{\alpha/(1+p-\alpha(mp-1))}f(\rho),~\text{as}~t\rightarrow0^{+},
\end{equation}
along the curve $x=\xi_{\rho}(t)=\rho t^{1/(1+p-\alpha(mp-1))}.$ 
\end{theorem}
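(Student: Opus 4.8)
The plan is to establish \eqref{I1}--\eqref{U1} by constructing matching upper and lower barriers and exploiting the scaling invariance of the principal part of the operator. The natural starting point is the observation that, under the rescaling $u(x,t) = C\,v(x/\lambda, t/\mu)$ with $\lambda = C^{(mp-1)/(1+p)}\mu^{1/(1+p)}$ suitably chosen (the precise choice being dictated by requiring the absorption term to become a lower-order perturbation as $t\to 0^+$), the diffusion part of \eqref{OP1} is left invariant while the term $bu^\beta$ scales with a positive power of $\mu$ under exactly the hypothesis $\alpha < \frac{1+p}{mp-\min\{1,\beta\}}$. This is where the case condition enters: it guarantees that near $t=0$ the solution behaves like the solution of the \emph{pure} double-degenerate diffusion equation $u_t = (|(u^m)_x|^{p-1}(u^m)_x)_x$ with the same initial data, for which the self-similar solution of the form $u = t^{\gamma}f(x t^{-\delta})$ with $\gamma = \alpha/(1+p-\alpha(mp-1))$, $\delta = 1/(1+p-\alpha(mp-1))$ exists and has a finite, strictly positive interface $\xi'_*$ determined by the ODE profile equation for $f$. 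I would first recall (from the preliminary estimates of \cref{sec: preliminary results}, which are invoked here) the existence, uniqueness, and basic regularity of the continuous weak (minimal) solution, together with the comparison principle, which is the essential tool throughout.

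The second step is to derive the \emph{upper} estimate on $\eta(t)$. Here I would build a supersolution to \eqref{OP1} by dropping the (favorable, i.e.\ negative) absorption contribution — since $-bu^\beta \le 0$, any supersolution of the pure diffusion equation dominating $u_0$ near $0$ is a supersolution of \eqref{OP1} — and taking it to be a slightly dilated self-similar profile $(1+\varepsilon)$-scaled in the similarity variable, so that its interface is $(\xi'_* C^{(mp-1)/(1+p-\alpha(mp-1))} + o(1))\,t^\delta$. Comparison then yields $\eta(t) \le (\xi_* + o(1))t^\delta$. The \emph{lower} estimate is the more delicate half: one must show that the absorption does not destroy the expansion, which requires a subsolution that accounts for $bu^\beta$. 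I would construct it as a self-similar profile contracted by $(1-\varepsilon)$ in the similarity variable and with a time-dependent correction absorbing the $O(\mu^{\text{positive}})$ error from the reaction term — a standard but technical perturbation argument — valid on a time interval $(0,\tau)$ with $\tau$ small; the hypothesis $\alpha<\frac{1+p}{mp-\min\{1,\beta\}}$ is exactly what makes the correction lower order. Combining the two gives \eqref{I1} with $\xi_*$ as in \eqref{XI1}. The pointwise asymptotics \eqref{U1} along $x=\rho t^\delta$ then follow from the same two-sided comparison evaluated at the moving point, with $f(\rho)$ identified as $C^{\text{(appropriate power)}}$ times the value of the limiting self-similar profile at the similarity coordinate $\rho\,C^{-(mp-1)/(1+p-\alpha(mp-1))}$; continuity and strict positivity of the profile on $(-\infty,\xi'_*)$ guarantee $f(\rho)>0$ for $\rho<\xi_*$.

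The main obstacle I anticipate is the rigorous construction of the lower barrier near the interface in the presence of the \emph{double} degeneracy: both $u=0$ and $(u^m)_x=0$ are degeneracy sets, so the profile equation $(|(f^m)'|^{p-1}(f^m)')' + \delta\,\xi f' - \gamma f = 0$ must be analyzed carefully near its right endpoint to get the exact local behavior $f(\xi)\sim \text{const}\,(\xi'_*-\xi)_+^{(1+p)/(mp-1)}$ of the profile, and this local form must be reconciled with the $(-x)_+^\alpha$ behavior of $u_0$ at the left — the matching of the ``outer'' initial asymptotics with the ``inner'' self-similar profile is what forces the precise exponent $1+p-\alpha(mp-1)$ and is the crux of the whole argument. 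A secondary technical point is making the heuristic rescaling rigorous when $u_0$ only \emph{satisfies} \eqref{IF2} asymptotically rather than exactly, which is handled by sandwiching $u_0$ between $(C\pm\varepsilon)(-x)_+^\alpha$ near $0$ and using the scaling to transfer the $\varepsilon$ into the $o(1)$ in \eqref{I1} and \eqref{U1}.
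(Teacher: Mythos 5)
Your overall strategy is on the right track, and your treatment of the upper bound coincides with the paper's: since $bu^{\beta}\ge 0$, the minimal solution of the $b=0$ problem with data $(C+\epsilon)(-x)_+^{\alpha}$ is a supersolution, and comparison plus the self-similar form of \cref{lemma 1} gives $\limsup_{t\to 0^+}\eta(t)t^{-1/(1+p-\alpha(mp-1))}\le\xi_*$. You have also correctly identified the mechanism behind the hypothesis $\alpha<(1+p)/(mp-\min\{1,\beta\})$: under the natural rescaling the absorption coefficient acquires a negative power of the scaling parameter. But the paper exploits this very differently from what you propose. In \cref{lemma 2} the rescaled functions $u_k^{\pm\epsilon}$ solve \eqref{OP2} with absorption coefficient $bk^{(\alpha(mp-\beta)-(1+p))/\alpha}\to 0$, and one passes to the limit $k\to+\infty$ (a compactness/stability argument for minimal solutions) to conclude that $u_{\pm\epsilon}$ is asymptotic, along every ray $x=\rho t^{1/(1+p-\alpha(mp-1))}$ with $\rho<\xi_*$, to the $b=0$ self-similar solution. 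This yields \eqref{U1} directly, and the lower bound $\liminf_{t\to 0^+}\eta(t)t^{-1/(1+p-\alpha(mp-1))}\ge\xi_*$ is then immediate from $f(\rho)>0$ for $\rho<\xi_*$; no subsolution touching the free boundary is ever needed.

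Your proposed lower barrier --- a contracted self-similar profile with a time-dependent correction absorbing the reaction term --- has a genuine gap in the strong-absorption range $0<\beta<1$. Near the free boundary of such a barrier $g$, one has $g\sim ct^{\gamma}(\xi_0-\xi)^{p/(mp-1)}$, so the time-derivative and diffusion terms are of order $(\xi_0-\xi)^{p/(mp-1)-1}$ while $bg^{\beta}$ is of order $(\xi_0-\xi)^{\beta p/(mp-1)}$; the absorption is subordinate only when $p(m+\beta)\ge 1+p$. When $p(m+\beta)<1+p$ (a case fully allowed under the hypothesis of \cref{theorem 1}) the term $bg^{\beta}$ dominates at the interface and the subsolution inequality fails there for any merely time-dependent multiplicative correction --- this is precisely the dichotomy $p(m+\beta)\gtrless 1+p$ that drives the case analysis in \cref{theorem 2} and \cref{lemma tw}, and it is what the scaling-limit argument of \cref{lemma 2} is designed to bypass. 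A second, related slip: the local behavior of the profile at an \emph{expanding} interface is $f(\xi)\sim\mathrm{const}\,(\xi'_*-\xi)_+^{p/(mp-1)}$ (cf.\ \eqref{E9} and the explicit solution \eqref{U7}), not $(\xi'_*-\xi)_+^{(1+p)/(mp-1)}$; the latter exponent governs the stationary/waiting-time regime \eqref{U9}. A barrier built on the $(1+p)/(mp-1)$ ansatz would not close even in the favorable case $p(m+\beta)>1+p$.
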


\begin{theorem}\label{theorem 2} 
Let $0<\beta<1, \alpha=(1+p) /(mp-\beta)$ and 
\[C_*=\Bigg[\frac{b(mp-\beta)^{1+p}}{(m(1+p))^{p}p(m+\beta)}\Bigg]^{\frac{1}{mp-\beta}}.\]
Then interface expands or shrinks according as $C>C_*$ or $C<C_*$ and
\begin{equation}\label{I3}
\eta(t)\sim\zeta_*t^{\frac{mp-\beta}{(1+p)(1-\beta)}},~\text{as}~t\rightarrow 0^{+},
\end{equation}
where $\zeta_* \lessgtr 0$ if $C \lessgtr C_*$, and for arbitrary $\rho < \zeta_*$ there exists $f_1(\rho)>0$ such that:
\begin{equation}\label{U4}
u(x,t)\sim t^{1/(1-\beta)}f_1(\rho),~\text{for}~x=\rho t^{\frac{mp-\beta}{(1+p)(1-\beta)}},~\text{as}~t\rightarrow 0^{+}.
\end{equation}
\end{theorem}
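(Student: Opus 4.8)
The plan is to exploit a scaling invariance that is available precisely at the critical exponent $\alpha=(1+p)/(mp-\beta)$, reduce the problem to an ordinary differential equation for a self-similar profile, identify an explicit stationary solution at $C=C_*$, and then read off the sign of the interface by comparison. \emph{Step 1 (self-similar reduction).} I would insert the ansatz $u(x,t)=t^{1/(1-\beta)}g(\eta)$, $\eta=xt^{-b}$ with $b=\frac{mp-\beta}{(1+p)(1-\beta)}$, into \eqref{OP1}. The exponents $a=1/(1-\beta)$ and $b$ are exactly those making the balance relations $a-1=(am-b)p-b=a\beta$ hold, so the power of $t$ factors out and $g$ must solve
\[
\bigl(|(g^{m})'|^{p-1}(g^{m})'\bigr)'+b\,\eta\,g'-\frac{g}{1-\beta}-bg^{\beta}=0 .
\]
Matching the initial trace $u(x,0^{+})=C(-x)_+^{\alpha}$ forces $g(\eta)\sim C(-\eta)^{\alpha}$ as $\eta\to-\infty$, which is consistent precisely because $a/b=\alpha$ equals the critical value. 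Moreover, the group $u(x,t)\mapsto\lambda u\bigl(\lambda^{-(mp-\beta)/(p+1)}x,\lambda^{\beta-1}t\bigr)$ maps solutions of \eqref{OP1} to solutions, and at the critical $\alpha$ it fixes the constant $C$ in \eqref{IF3}; since the minimal solution is unique and the group acts by order isomorphisms, the minimal solution of \eqref{OP1}, \eqref{IF3} is itself invariant, hence of the above self-similar form. This reduces everything to locating the free-boundary point $l(C)$ of the profile $g=g_C$: then $\eta(t)=\zeta_* t^{b}$ with $\zeta_*=l(C)$ and $f_1(\rho)=g_C(\rho)$, which is the content of \eqref{I3}--\eqref{U4} once the sign of $l(C)$ is pinned down.

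\emph{Step 2 (the critical stationary solution).} I would check by direct differentiation that for $C=C_*$ the pure power $g_{C_*}(\eta)=C_*(-\eta)_+^{\alpha}$ solves the profile equation on $\{\eta<0\}$. The arithmetic that makes this work is: $m\alpha-1=\frac{m+\beta}{mp-\beta}>0$, so the diffusion term is genuinely degenerate at the interface; $(m\alpha-1)p-1=\alpha\beta$, so the diffusion and absorption terms carry the same power of $(-\eta)$; and $b\alpha=\frac{1}{1-\beta}$, so the terms $b\eta g'$ and $-g/(1-\beta)$ cancel identically. What remains is exactly $(m\alpha)^{p}p(m\alpha-1)C_*^{mp}=bC_*^{\beta}$, which is the definition of $C_*$. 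Equivalently, $u_*(x,t)\equiv C_*(-x)_+^{\alpha}$ is a time-independent weak solution of \eqref{OP1} with a stationary interface at $x=0$; this settles the borderline $C=C_*$ ($\zeta_*=0$) and supplies the comparison barrier for the remaining cases.

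\emph{Step 3 (sign of the interface and sharp asymptotics).} Ordering the data, $C_1(-x)_+^{\alpha}\le C_2(-x)_+^{\alpha}$ for $C_1<C_2$, the comparison principle for \eqref{OP1} (\cref{sec: preliminary results}) gives $U_{C_1}\le U_{C_2}$, hence $l(C_1)\le l(C_2)$; comparing with $u_*$ yields $l(C)\le0$ for $C\le C_*$ and $l(C)\ge0$ for $C\ge C_*$. That these inequalities are \emph{strict} for $C\ne C_*$, and hence that the interface genuinely expands when $C>C_*$ and shrinks when $C<C_*$ as in \eqref{I3}, is where the core ODE work enters: one must show $C\mapsto l(C)$ is continuous and strictly increasing with $l(C_*)=0$. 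Finally, for data merely satisfying \eqref{IF2}, one squeezes: given $\varepsilon>0$, near $x=0$ one has $C_{-}(-x)_+^{\alpha}\le u_0(x)\le C_{+}(-x)_+^{\alpha}$ with $C_{\pm}\to C$ as $\varepsilon\to0$, and on a short time interval the self-similar solutions $U_{C_{\pm}}$ (suitably cut off away from $x=0$) serve as sub/super-solutions, so $\eta(t)$ lies between $l(C_{-})t^{b}$ and $l(C_{+})t^{b}$ for small $t$; letting $\varepsilon\to0$ and using continuity of $l$ and of $C\mapsto g_C$ gives $\eta(t)\sim\zeta_* t^{b}$ with $\zeta_*=l(C)$, and \eqref{U4} with $f_1(\rho)=g_C(\rho)$. (For the exact data \eqref{IF3} the self-similarity of Step 1 makes $\eta(t)=\zeta_* t^{b}$ global, not just asymptotic.)

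\emph{Main obstacle.} The technical heart is the profile ODE: proving, for every $C>0$, existence and uniqueness of a profile $g_C$ that decays like $C(-\eta)^{\alpha}$ at $-\infty$ and has a single interface point $l(C)$ near which it vanishes like a power of the distance to $l(C)$, and — above all — the strict monotonicity and continuity of $C\mapsto l(C)$ through $0$ at $C=C_*$, which is what makes the four regions of \cref{fig:figure 1} genuinely disjoint. The double degeneracy, at $g=0$ and at $(g^{m})'=0$, makes both the shooting/phase-plane analysis near the interface and the boundary-point argument underlying strict monotonicity substantially more delicate than in the porous-medium ($p=1$) or $p$-Laplacian ($m=1$) cases of \cite{Abdulla1,AbdullaJeli}.
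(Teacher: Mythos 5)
Your Steps 1 and 2 match the paper: the scaling-invariance argument forcing the minimal solution of \eqref{OP1}, \eqref{IF3} into the self-similar form \eqref{U3} is exactly \cref{lemma 3}, and your verification that $C_*(-x)_+^{\alpha}$ is stationary (the identity $C_*^{mp-\beta}(m\alpha)^{p}p(m\alpha-1)=b$) is the same computation the paper uses to define $C_*$. The problem is Step 3. Comparison with the stationary barrier only yields the weak inequalities $\zeta_*\le 0$ for $C\le C_*$ and $\zeta_*\ge 0$ for $C\ge C_*$, and you explicitly defer the strictness — i.e.\ the actual content of the theorem, that the interface genuinely expands or shrinks — to an unproved claim that $C\mapsto l(C)$ is continuous and \emph{strictly} increasing through $0$. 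That is not a technicality one can wave at: strict monotonicity of the free-boundary point of a degenerate profile ODE does not follow from soft ordering arguments (ordered solutions can share an interface), and no shooting or phase-plane analysis is actually carried out. As written, the proposal proves $\zeta_*\gtreqless 0$ but not $\zeta_*\gtrless 0$, nor the two-sided asymptotics \eqref{I3}, \eqref{U4}.

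The paper closes this gap by entirely different, quantitative means. For $C>C_*$ it proves $f_1(0)=A_1>0$ directly: when $p(m+\beta)<1+p$ by comparison from below with the explicit subsolution $C_1(t-x)_+^{(1+p)/(mp-\beta)}$, $C_*<C_1<C$, and when $p(m+\beta)>1+p$ by comparison with a finite travelling-wave solution constructed in \cref{lemma tw}; it then sandwiches $u$ between explicit profiles $C_it^{1/(1-\beta)}(\zeta_i-\zeta)_+^{\mu}$ to get $0<\zeta_1\le\zeta_*\le\zeta_2$ as in \eqref{E1}, \eqref{E2}. For $C<C_*$ it builds explicit reaction-dominated supersolutions $[C^{1-\beta}(-x)_+^{\alpha(1-\beta)}-b(1-\beta)(1-\gamma)t]_+^{1/(1-\beta)}$ (and, when $p(m+\beta)<1+p$, the sharper barriers \eqref{E4}) whose supports lie strictly left of $x=0$ for $t>0$, forcing $\zeta_*\le\zeta_2<0$. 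No continuity or monotonicity of $C\mapsto\zeta_*(C)$ is ever needed; the sign comes from barriers with explicit constants. If you want to complete your argument along your own lines you would have to supply the profile-ODE analysis you flag as the ``main obstacle''; the shorter route is to replace Step 3 by the paper's sub/supersolution constructions, in particular the travelling-wave lemma, which is the one ingredient your outline is missing entirely.
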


\begin{theorem}\label{theorem 3} 
If $0<\beta<1, \text{and} \, \alpha >(1+p)/(mp-\beta)$, then the interface shrinks and
\begin{equation}\label{I4}
\eta(t)\sim - \ell_*t^{1/\alpha(1-\beta)},~ \text{as}~t\rightarrow 0^{+},
\end{equation}
where~$\ell_*=C^{-1/\alpha}(b(1-\beta))^{1/\alpha(1-\beta)}. $For arbitrary $\ell>\ell_*$ we have:
\begin{equation}\label{U5}
u(x,t)\sim\Big [C^{1-\beta}(-x)_+^{\alpha(1-\beta)}-b(1-\beta)t\Big]^{1/(1-\beta)},\, \text{as} \,t\rightarrow 0^{+},
\end{equation}
along the curve $x=\eta_l(t)=-lt^{1/\alpha(1-\beta)}.$ 
\end{theorem}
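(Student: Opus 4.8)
\medskip
\noindent\textit{Proof strategy for \cref{theorem 3}.}
The role of the hypothesis $\alpha>(1+p)/(mp-\beta)$ is to make $u_0$ so flat at $x=0$ that, near the interface, the degenerate diffusion term in \eqref{OP1} is of strictly lower order in $t$ than the absorption term $bu^{\beta}$; one therefore expects $u$ near the interface to behave like the solution of the pure absorption equation $W_t=-bW^{\beta}$, $W(\cdot,0)=u_0$, namely $W=\big[u_0^{1-\beta}-b(1-\beta)t\big]_+^{1/(1-\beta)}$. With $u_0\sim C(-x)_+^{\alpha}$ this is exactly the right-hand side of \eqref{U5}, and its positivity set is $\{x<-\ell_*t^{1/\alpha(1-\beta)}\}$, which already explains \eqref{I4} heuristically. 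To make this rigorous I would first reduce to the power datum: by \eqref{IF2}, for a given $\varepsilon>0$ one has $(C-\varepsilon)(-x)_+^{\alpha}\le u_0(x)\le(C+\varepsilon)(-x)_+^{\alpha}$ on a fixed left neighbourhood $(-d,0)$ of $0$; after modifying $u_0$ outside $(-d,0)$, the comparison principle and finite speed of propagation (which prevents the values of $u_0$ away from $0$ from influencing $u$ near $x=0$ on a short time interval) reduce matters to $u_0=(C\pm\varepsilon)(-x)_+^{\alpha}$, i.e.\ the setting of \eqref{IF3}.

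It is convenient to introduce the self-similar variables $y=(-x)\,t^{-1/\alpha(1-\beta)}$ and $g(y,t)=t^{-1/(1-\beta)}u(x,t)$, in which \eqref{OP1} reads
\[
t\,g_t=t^{\theta}\,\partial_y\!\Big(|(g^{m})_y|^{p-1}(g^{m})_y\Big)-\frac{g}{1-\beta}+\frac{y\,g_y}{\alpha(1-\beta)}-b\,g^{\beta},\qquad \theta:=\frac{\alpha(mp-\beta)-(1+p)}{\alpha(1-\beta)}>0,
\]
so that as $t\to0^{+}$ it degenerates to the first-order profile equation $\frac{g}{1-\beta}-\frac{y\,g_y}{\alpha(1-\beta)}+b\,g^{\beta}=0$, solved by $g_0(y)=\big[C^{1-\beta}y^{\alpha(1-\beta)}-b(1-\beta)\big]_+^{1/(1-\beta)}$ with free boundary $y=\ell_*$. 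Equivalently, back in the original variables I would take, for small $\delta'>0$, the functions
\[
v_{\delta'}^{\pm}(x,t)=\Big[(C\pm\varepsilon)^{1-\beta}(-x)_+^{\alpha(1-\beta)}-b(1-\beta)(1\mp\delta')\,t\Big]_+^{1/(1-\beta)},
\]
for which, where $v_{\delta'}^{\pm}>0$, a direct computation gives $L v_{\delta'}^{\pm}=\pm\,b\delta'(v_{\delta'}^{\pm})^{\beta}-\big(|((v_{\delta'}^{\pm})^{m})_x|^{p-1}((v_{\delta'}^{\pm})^{m})_x\big)_x$. The content of $\theta>0$ is that in a fixed neighbourhood $(-d,0)$ of $x=0$ the diffusion term here is dominated by $b\delta'(v_{\delta'}^{\pm})^{\beta}$, so that $L v_{\delta'}^{-}\le0\le L v_{\delta'}^{+}$ there.

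I would then compare $u$ with $v_{\delta'}^{\pm}$ on $\{x>-d\}\times\{0<t<\tau\}$. At $t=0$ one has $v_{\delta'}^{-}(\cdot,0)\le u_0\le v_{\delta'}^{+}(\cdot,0)$; on the lateral boundary $x=-d$ one has $v_{\delta'}^{\pm}(-d,t)\to(C\pm\varepsilon)d^{\alpha}$ and $u(-d,t)\to u_0(-d)=Cd^{\alpha}$ as $t\to0^{+}$, so $v_{\delta'}^{-}\le u\le v_{\delta'}^{+}$ on $\{x=-d\}$ for $t$ small; the comparison principle then yields $v_{\delta'}^{-}\le u\le v_{\delta'}^{+}$ on $\{x>-d\}\times\{0<t<\tau\}$. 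Since the free boundaries of $v_{\delta'}^{\pm}$ are $-(C\pm\varepsilon)^{-1/\alpha}\big(b(1-\beta)(1\mp\delta')\big)^{1/\alpha(1-\beta)}\,t^{1/\alpha(1-\beta)}$, letting $\delta',\varepsilon\to0^{+}$ gives \eqref{I4} and, as $\ell_*>0$, the shrinking of the interface. Evaluating $v_{\delta'}^{\pm}$ along $x=\eta_{\ell}(t)=-\ell t^{1/\alpha(1-\beta)}$ with $\ell>\ell_*$, dividing by $t^{1/(1-\beta)}$, and passing to the limit $t\to0^{+}$ and then $\delta',\varepsilon\to0^{+}$ gives \eqref{U5}.

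The step I expect to be the main obstacle is verifying $L v_{\delta'}^{-}\le0$ and $L v_{\delta'}^{+}\ge0$ \emph{up to the free boundaries} of $v_{\delta'}^{\pm}$. There $(v_{\delta'}^{\pm})^{1-\beta}$ vanishes linearly, so $(v_{\delta'}^{\pm})^{m}$ vanishes like $(\mathrm{dist})^{m/(1-\beta)}$ and the diffusion term behaves like $(\mathrm{dist})^{p(m/(1-\beta)-1)-1}$, while $b\delta'(v_{\delta'}^{\pm})^{\beta}$ behaves like $(\mathrm{dist})^{\beta/(1-\beta)}$; comparing the two exponents one sees that if $m+\beta>1+\tfrac{1}{p}$ both barriers satisfy the required inequalities all the way to their free boundaries and the argument is complete, whereas if $m+\beta\le1+\tfrac{1}{p}$ the pure-absorption profile is no longer an admissible barrier for $v_{\delta'}^{+}$ (when $m+\beta\ge1$) or for $v_{\delta'}^{-}$ (when $m+\beta<1$) in a one-sided neighbourhood of the free boundary. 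In that case one must replace the offending barrier near the free boundary by one carrying the genuine degenerate-diffusion local exponent $p/(mp-1)$ coming from the balance $u_t\sim\big(|(u^{m})_x|^{p-1}(u^{m})_x\big)_x$, and glue it continuously to the outer profile; this regime-dependent splicing, together with the bookkeeping of the localization radius $d$ and time $\tau$, is the technical heart of the argument.
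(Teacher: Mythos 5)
Your barriers $v^{\pm}_{\delta'}$ are exactly the functions $g_{\pm\epsilon}$ in the paper's proof of \cref{theorem 3}, your dichotomy $m+\beta\gtrless 1+1/p$ (i.e.\ $p(m+\beta)\gtrless 1+p$) is the paper's, and in the regime $p(m+\beta)\geq 1+p$ your argument is complete and coincides with the paper's: there one shows the diffusion remainder $S$ in $Lg_{\pm\epsilon}=bg_{\pm\epsilon}^{\beta}\{\pm\epsilon+S\}$ satisfies $|S|<\epsilon/2$ on all of $M_1$ after shrinking $|x_{\epsilon}|$, and both \eqref{I4} and \eqref{U5} fall out of the two-sided comparison. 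Your finer observation that for $m+\beta<1$ it is the \emph{lower} pure-absorption barrier that fails near its free boundary is correct and sharper than the paper's one-line treatment of that side.

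The genuine gap is in the regime $p(m+\beta)<1+p$, which you rightly call the technical heart but leave as a sketch, and where your plan as stated risks circularity. The paper does not splice a $p/(mp-1)$-profile onto the failing barrier. It first proves \eqref{U5}, in \emph{all} parameter regimes, independently of any barrier: \cref{lemma 4} rescales $u^{\pm\epsilon}_k(x,t)=ku_{\pm\epsilon}(k^{-1/\alpha}x,k^{\beta-1}t)$, notes that since $\alpha(mp-\beta)>1+p$ the diffusion term acquires the vanishing factor $k^{(1+p-\alpha(mp-\beta))/\alpha}$, and uses uniform bounds plus uniform H\"older continuity and Arzel\`a--Ascoli (as in Lemma~3.4 of \cite{Abdulla1}) to pass to the pure reaction ODE, yielding \eqref{U17} and hence \eqref{U5}. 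Only then is the upper interface bound attacked, with the single-formula barrier $g=C_6\big(-\zeta_5 t^{1/\alpha(1-\beta)}-x\big)_+^{\alpha}$ (exponent $\alpha$, not $p/(mp-1)$) on $\{\eta_{\ell}(t)<x<x_0\}$; the already-established \eqref{U5} supplies the lateral boundary inequality \eqref{E26} on $x=\eta_{\ell}(t)$ that closes this comparison. If \eqref{U5} is itself to be extracted from your barriers, you cannot also invoke it to anchor the repaired upper barrier; and your alternative of gluing an inner profile to the outer one across an interior curve requires verifying the flux-continuity (or correct-sign flux jump) hypothesis of \cref{CT} across the splice, which you do not address. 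Either carry out the blow-up compactness argument of \cref{lemma 4} as a separate first step, or make the spliced supersolution rigorous; as written, \eqref{U5} and the upper half of \eqref{I4} remain unproved when $p(m+\beta)<1+p$.
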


\begin{theorem}\label{theorem 4}
If $\alpha \geq (1+p)/(mp-1)$ and $\beta \geq 1$, then interface initially remains stationary.
\end{theorem}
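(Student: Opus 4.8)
The assertion is that there exists $\delta>0$ with $\eta(t)\equiv 0$ on $[0,\delta]$; equivalently, on that interval the solution must neither expand to the right ($u(x,t)=0$ for $x\ge 0$) nor retreat from the origin ($u(\cdot,t)>0$ on some left neighbourhood of $0$). The retreat half is the easier one. Since $\beta\ge 1$, the absorption ODE $\dot g=-C_1g^{\beta}$ has no finite-time extinction for any $C_1>0$, so I would place beneath $u_0$ a subsolution $\underline u(x,t)=g(t)\phi(x)$ with $\phi$ a fixed compactly supported profile (e.g.\ a Barenblatt-type profile) supported in a small interval $(-\delta_0,0)$, $\phi\le u_0/g(0)$ there, and $C_1$ large enough that $L\underline u\le 0$; because $g(t)>0$ for all $t$, the comparison principle gives $u\ge\underline u>0$ near $0^{-}$ for every $t\in[0,\delta]$, hence $\eta(t)\ge 0$. (Alternatively, non-retreat of $\eta$ for $\beta\ge 1$ is a consequence of the lower a priori estimates of \cref{sec: preliminary results}.) The genuine content is therefore the waiting-time bound $\eta(t)\le 0$.

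For that I would use the supersolution
\[
\bar u(x,t)=A(t)\,(-x)_+^{\alpha}.
\]
Since $\alpha\ge(1+p)/(mp-1)$ forces $m\alpha>1$, the flux $(\bar u^{m})_x=-m\alpha A^{m}(-x)^{m\alpha-1}$ stays bounded up to $x=0^{-}$ and a direct computation gives, for $x<0$,
\[
L\bar u=A'(-x)^{\alpha}-(m\alpha)^{p}(m\alpha-1)p\,A^{mp}(-x)^{(m\alpha-1)p-1}+bA^{\beta}(-x)^{\alpha\beta}.
\]
The crucial identity is $(m\alpha-1)p-1-\alpha=\alpha(mp-1)-(1+p)\ge 0$, which holds \emph{exactly} under the hypothesis on $\alpha$: it says the diffusion term carries a power of $(-x)$ no smaller than $\alpha$, hence is dominated by $A'(-x)^{\alpha}$ near the interface, while $\beta\ge 1$ gives $\alpha\beta\ge\alpha$ so the nonnegative absorption term may be discarded. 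Consequently, on $\{-\delta_0\le x\le 0\}$ with $\delta_0\le 1$,
\[
L\bar u\ \ge\ \big[A'-c_0A^{mp}\big](-x)^{\alpha},\qquad c_0:=(m\alpha)^{p}(m\alpha-1)p.
\]
I would then let $A$ solve $A'=c_0A^{mp}$, $A(0)=A_0$, which (because $mp>1$) is positive, increasing and finite on a maximal interval $[0,T_0)$ with $T_0=\big(c_0(mp-1)A_0^{mp-1}\big)^{-1}>0$, blowing up only at $T_0$; fixing any $\delta<T_0$ we get $L\bar u\ge 0$. Moreover $\bar u$ is a weak supersolution across $x=0$ as well, since both $\bar u$ and its flux $|(\bar u^{m})_x|^{p-1}(\bar u^{m})_x=-(m\alpha)^{p}A^{mp}(-x)_+^{(m\alpha-1)p}$ vanish there.

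To close the argument I would choose $\delta_0>0$ small so that $u_0(x)\le 2C(-x)^{\alpha}$ on $(-\delta_0,0)$ (legitimate by \eqref{IF2}), take $A_0\ge 2C$ so that $\bar u(\cdot,0)\ge u_0$ on $[-\delta_0,\infty)$, and enlarge $A_0$ further if necessary so that $A_0\delta_0^{\alpha}\ge M_1$, where $M_1$ is an upper bound for $u$ on $[-\delta_0,0]\times[0,\delta]$. Such a local bound is available either from \cref{sec: preliminary results}, or directly from the finite speed of propagation (finite domain of dependence) combined with comparison against a constant supersolution. Then $\bar u\ge u$ on the parabolic boundary $\{t=0\}\cup\{x=-\delta_0\}$ of $[-\delta_0,\infty)\times[0,\delta]$, and the comparison principle yields $u\le\bar u$ there; in particular $u(x,t)\le\bar u(x,t)=0$ for $x\ge 0$, so $\eta(t)\le 0$ on $[0,\delta]$. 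Combined with $\eta(t)\ge 0$ this gives $\eta(t)\equiv 0$.

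The supersolution computation is essentially forced once one spots the exponent identity, so the main obstacle I anticipate is the bookkeeping: selecting a \emph{single} pair $(\delta_0,\delta)$ that at once makes $\bar u(\cdot,0)\ge u_0$, makes the lateral comparison at $x=-\delta_0$ valid, and keeps $A(t)$ finite on $[0,\delta]$ — which is precisely where the local a priori bound $M_1$ and the finite-speed-of-propagation input enter. A secondary technical point is verifying that $\bar u$ and, on the retreat side, $\underline u$ are admissible weak super/sub-solutions across the free boundary $x=0$ (respectively across $\partial(\mathrm{supp}\,\underline u)$) in the sense used in the paper; this is routine, as the relevant fluxes are continuous there.
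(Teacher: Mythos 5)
Your waiting-time half is correct and is, in substance, the paper's own argument: the upper barriers in cases (4a)--(4d) are exactly of the form $A(t)(-x)_+^{\alpha}$ with $A(t)=(C+\epsilon)(1-\gamma_{\epsilon}t)^{1/(1-mp)}$, i.e.\ $A'=c A^{mp}$, and the mechanism is precisely your exponent identity $(m\alpha-1)p-1-\alpha=\alpha(mp-1)-(1+p)\ge 0$ together with discarding the nonnegative absorption term. Your bookkeeping with $A_0\ge\max\{2C,\,M_1\delta_0^{-\alpha}\}$ and $\delta<T_0$ is not circular and closes correctly, and the flux of $\bar u$ vanishes at $x=0$ since $(m\alpha-1)p>0$, so $\bar u$ is admissible in the sense of the comparison lemma.

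The non-retreat half as written has a gap. For $\underline u=g(t)\phi(x)$ with a compactly supported bump $\phi$, the term $-g^{mp}\big(|(\phi^m)'|^{p-1}(\phi^m)'\big)'$ is \emph{not} signed: near the maximum of $\phi$ the flux divergence is negative, so this term is positive and of order $g^{mp}$, not $g^{\beta}$. The ODE $\dot g=-C_1g^{\beta}$ therefore does not suffice to make $L\underline u\le 0$; when $\beta>mp$ (which is allowed in case (4d)) the required inequality $C_1 g^{\beta}\phi\ge g^{mp}\lvert D\rvert$ fails as $g$ decreases, no matter how large $C_1$ is. The fix is either to take $\dot g=-C_1(g^{\beta}+g^{mp})$ (still no finite-time extinction, since both exponents exceed $1$), or — cleaner, and what the paper does — to use the monomial barrier $\underline u=(C-\epsilon)(-x)_+^{\alpha}A(t)$ on $\{x>x_{\epsilon}\}$ with a lateral comparison at $x=x_{\epsilon}$: for this profile the flux divergence equals $(m\alpha)^p(m\alpha-1)p\,A^{mp}(-x)^{(m\alpha-1)p-1}\ge 0$ everywhere on $x<0$, so it only helps the subsolution inequality, and one is reduced to $A'\le -b\,\delta_0^{\alpha(\beta-1)}A^{\beta}$, which has no extinction for $\beta\ge 1$. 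With that repair your argument matches the paper's (which itself defers to the porous-medium case of \cite{Abdulla1}).
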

\section{Further Details of the Main Results}\label{sec: details of the main results} 
{\it Further details of \cref{theorem 1}}. 
$f$ is a shape function of the self-similar solution to the problem \eqref{OP1}, \eqref{IF3} with $b=0$:

\begin{equation}\label{U8}
u(x,t)=t^{\frac{\alpha}{1+p-\alpha(mp-1)}}f(\xi), \, \xi=xt^{-\frac{1}{1+p-\alpha(mp-1)}}.
\end{equation}
In fact, $f$ is a solution of the following nonlinear ODE problem in $\mathbb{R}$:
\begin{equation}\label{selfsimilarODE1} 
\big(|(f^{m}(\xi))'|^{p-1}(f^{m}(\xi))'\big)'+(1+p-\alpha (mp-1))^{-1}(\xi f'(\xi)-\alpha f(\xi))=0,
\end{equation}
\begin{equation}\label{selfsimilarODE1bc}
f(\xi)\sim C(-\xi)^\alpha, \text{ as } \xi \downarrow -\infty, ~f(+\infty)= 0.
\end{equation}
Moreover, $\exists \, \xi_*>0$ such that: $f(\xi)\equiv 0$ for $\xi \geq \xi_*$; $f(\xi)>0$ for $\xi<\xi_*$. Dependence on $C$ is given by the following relation:
\begin{subequations}\label{F1}
\begin{align}
f(\rho)=C^{1+p/(1+p-\alpha(mp-1))}f_0\Big(C^{(mp-1)/(\alpha(mp-1)-(1+p)} \rho\Big),\label{a1}\\
f_0(\rho)=w(\rho,1), \quad \xi'_*=\sup\{\rho:f_0(\rho)>0\}>0, \label{a2}
\end{align}
\end{subequations}
where $w$ is a minimal solution of the CP \eqref{OP1}, \eqref{IF3} with $b=0,\, C=1.$ Lower and upper estimations for $f$ are given in \eqref{E9}. We also have that: \\
\begin{equation}\label{XI2}
\xi'_*=A_0^{\frac{mp-1}{1+p}}\Bigg[\frac{(mp)^{p}(1+p-\alpha(mp-1))}{(mp-1)^{p}}\Bigg]^{\frac{1}{1+p}}\xi''_*,
\end{equation}
where $A_0=w(0,1)$ and $\xi''_*$~is some number belonging to the segment $[\xi_1,\xi_2]$ (see ~\cref{sec: appendix}).
In the particular case $\alpha=p(mp-1)^{-1}$ and $mp>1+p-p(\text{min}\{1,\beta\})$, the explicit solution of \eqref{OP1}, \eqref{IF3} with $b=0$ is given by \eqref{U7} and
\begin{equation}\label{XI5}
\xi_1=\xi_2 = 1,~\xi'_*=(mp)^{p}(mp-1)^{-p},~f_0(x)=\big(\xi'_*-x\big)_+^{p/(mp-1)}
\end{equation}

{\it Further details of \cref{theorem 2}}.
If $p(m+\beta)=1+p$, the solution to \eqref{OP1}, \eqref{IF3} is
\begin{equation}\label{U2}
u(x,t)=C(\zeta_*t-x)_+^{\frac{1+p}{mp-\beta}}, \, \zeta_*=b(1-\beta)C^{\beta-1}((C/C_*)^{mp-\beta}-1).
\end{equation}
Let $p(m+\beta)\neq1+p.$~If~$C=C_*$,~then~$u_0$ is a stationary solution to \eqref{OP1}, \eqref{IF3}. If~$C\neq C_*,$ then the minimal solution to \eqref{OP1}, \eqref{IF3} is of the self similar form: 
\begin{equation}\label{U3}
u(x,t)=t^{1/(1-\beta)}f_1(\zeta), \, \zeta=xt^{-\frac{mp-\beta}{(1+p)(1-\beta)}},
\end{equation}
\begin{equation}\label{I2}
\eta(t) =\zeta_* t^{\frac{mp-\beta}{(1+p)(1-\beta)}}, \, 0\leq t<+\infty,
\end{equation}
where $f_1(\zeta)$ solves the following nonlinear ODE problem:
\begin{equation}\label{selfsimilarNODE3}
\big(|(f_1^{m})'|^{p-1}(f_1^{m})'\big)'+\frac{mp-\beta}{(1+p)(1-\beta)}\zeta f_1'-\frac{1}{1-\beta}f_1-bf^{\beta}_1=0,~\zeta\in\mathbb{R},
\end{equation}
\begin{equation}\label{selfsimilarNODE3bc}
f_1(\zeta)\sim C(-\zeta)^{(1+p)/(mp-\beta)}, \text{ as } \zeta 
\downarrow -\infty, \text{ and } f_1(\zeta) \rightarrow 0, \text{ as } \zeta \uparrow +\infty.
\end{equation}
Moreover, $\exists \zeta_*$ such that $f(\zeta)\equiv 0$ for $\zeta \geq \zeta_*$; $f(\zeta)>0$ for $\zeta<\zeta_*$. If $C>C_*$~then the interface expands, $f_1(0)=A_1>0 $~(see \eqref{lemma 3}), and:
\begin{equation}\label{E1}
C_1t^{\frac{1}{1-\beta}}\Big(\zeta_1-\zeta\Big)_+^{\mu}\leq u\leq C_2t^{\frac{1}{1-\beta}}\Big(\zeta_2-\zeta\Big)_+^{\mu}, \, 0\leq x<+\infty, \, 0<t<+\infty,
\end{equation}
where
\begin{equation*}
\begin{cases}
\mu=p(mp-1)^{-1}, & \text{if}~ p(m+\beta)>1+p, \\
\mu=(1+p)(mp-\beta)^{-1}, & \text{if}~ p(m+\beta)<1+p,
\end{cases}
\end{equation*}
which implies:
\begin{equation}\label{E2}
\zeta_1\leq \zeta_*\leq \zeta_{2}.
\end{equation}
If $0<C<C_*$, then the interface shrinks. If $p(m+\beta)>1+p$ then: 
\begin{gather}
[C^{1-\beta}(-x)_+^{\frac{(1+p)(1-\beta)}{mp-\beta}}-b(1-\beta)t]_+^{\frac{1}{1-\beta}}
\leq u \leq\nonumber\\
[C^{1-\beta}(-x)_+^{\frac{(1+p)(1-\beta)}{mp-\beta}}-b(1-\beta)(1-(C/C_*)^{mp-\beta})t]_+^{\frac{1}{1-\beta}},
 \, x\in \mathbb{R},\, t\geq 0,\label{E3}
\end{gather}
which also implies \eqref{E2}, where we replace $\zeta_1$ (respectively, $\zeta_2$) with respective negative values given in~\cref{sec: appendix}.
However, if  $p(m+\beta)<1+p,$ then:
\begin{equation}\label{E4}
C_*\Big(-\zeta_3t^{\frac{mp-\beta}{(1+p)(1-\beta)}}-x\Big)_+^{\frac{1+p}{mp-\beta}}\leq u \leq C_3\Big(-\zeta_4t^{\frac{mp-\beta}{(1+p)(1-\beta)}}-x\Big)_+^{\frac{1+p}{mp-\beta}},\, x\in \mathbb{R},\, t>0,
\end{equation}
where the left-hand side is valid for $x\geq -\ell_0t^{\frac{mp-\beta}{(1+p)(1-\beta)}},$ while the right-hand side is valid for $x\geq-\ell_1t^{\frac{mp-\beta}{(1+p)(1-\beta)}}$. From \eqref{E4}, \eqref{E2} follows if we replace $\zeta_1$ and $\zeta_2$ with $-\zeta_3$ and $-\zeta_4$, respectively.

{\it Further Details of \cref{theorem 4}}. There are four different subcases (see \cref{fig:figure 1}).

(4a) If $\beta=1, \alpha =(1+p)/(mp-1)$, the unique minimal solution to \eqref{OP1}, \eqref{IF3} is
\begin{equation}\label{U6}
u_C=C(-x)_+^{(1+p)/(mp-1)}e^{-bt}\big[1-(C/\bar C)^{mp-1}b^{-1}(1-e^{-b(mp-1)t})\big]^{1/(1-mp)}
\end{equation}
where
\begin{align}
T=+\infty,\quad \text{if}~~b\geq (C/\bar C)^{mp-1},\nonumber\\
T=(b(1-mp))^{-1}\text{ln}[1-b(\bar C/C)^{mp-1}],\quad \text{if}~-\infty<b<(C/\bar C)^{mp-1},\nonumber\
\end{align}

(4b) Let $\beta=1~ \text{and}~ \alpha>(1+p)/(mp-1). ~$Then $\forall \epsilon>0 \ \exists~x_{\epsilon}<0$ and $\delta_{\epsilon}>0$ such that:
\begin{gather} 
(C-\epsilon)(-x)_+^{\alpha}e^{-bt}\leq u 
\leq (C+\epsilon)(-x)_+^{\alpha}e^{-bt}\nonumber\\\big[1-\epsilon (b(mp-1))^{-1}\big(1-e^{-b(mp-1)t}\big)\big]^{1/(1-mp)}, 
~x>x_{\epsilon},~~0\leq t\leq \delta_{\epsilon}. \label{E5}
\end{gather}

(4c) Let $1<\beta<mp ~\text{and}~ \alpha\geq (1+p)/(mp-\beta)$. Then $\forall \epsilon>0 \ \exists~x_{\epsilon}<0$ and $\delta_{\epsilon}>0$ such that:
\begin{equation}\label{E6}
g_{-\epsilon}(x,t) \leq u(x,t) \leq g_{\epsilon}(x,t), ~x\geq x_{\epsilon},~0\leq t \leq \delta_{\epsilon}
\end{equation}
where
\[g_{\epsilon}(x,t)=\begin{cases}
[(C+\epsilon)^{1-\beta}|x|^{\alpha(1-\beta)}+b(\beta-1)(1-\epsilon-\kappa_{\epsilon})t]^{1/(1-\beta)}, &  \text{if}~ x_{\epsilon}\leq x <0,\\
0, & \text{if}~ x\geq 0,
\end{cases}\]
where $\kappa_{\epsilon} = 0$, if $\alpha > (1+p)/(mp-\beta)$;~$\kappa_\epsilon=((C+\epsilon)/C_*)^{mp-\beta}$, if~$\alpha = (1+p)/(mp-\beta)$.

(4d) Let either $1<\beta<mp,~(1+p)/(mp-1)\leq \alpha<(1+p)/(mp-\beta),$ or $\beta \geq mp,~\alpha \geq (1+p)/(mp-1).$ If $\alpha=(1+p)/(mp-1)$ then for $\forall \epsilon>0~\exists~x_{\epsilon}<0$ and $\delta_{\epsilon}>0$ such that:
\begin{equation}\label{E7}
(C-\epsilon)(-x)_+^{\frac{1+p}{mp-1}}(1-\gamma_{-\epsilon}t)^{\frac{1}{1-mp}} \leq u 
\leq (C+\epsilon)(-x)_+^{\frac{1+p}{mp-1}}(1-\gamma_{\epsilon}t)^{\frac{1}{1-mp}}
\end{equation}
for $x>x_{\epsilon},~0\leq t\leq \delta_{\epsilon}$ (see \cref{sec: appendix} for $\gamma_{\epsilon}$).
However, if $\alpha > (1+p)/(mp-1)$, then for arbitrary sufficiently small $\epsilon>0$, there exist $x_{\epsilon}<0$ and $\delta_{\epsilon}>0$ such that:
\begin{equation}\label{E8}
(C-\epsilon)(-x)_+^{\alpha} \leq u \leq (C+\epsilon)(-x)_+^{\alpha}(1-{\epsilon}t)^{1/(1-mp)},~x>x_{\epsilon},~0\leq t\leq \delta_{\epsilon}.
\end{equation}

{\it Results for the case $b=0$}.  

(1) If $\alpha = p/(mp-1)$ the minimal solution to the problem \eqref{OP1}, \eqref{IF3} is
\begin{equation}\label{U7}
u(x,t)=C(\xi_*t-x)_+^{p/(mp-1)}, \, \xi_*=C^{mp-1}\Big(\frac{mp}{mp-1}\Big)^{p}.
\end{equation}
If $0<\alpha <(1+p)/(mp-1),$ then the minimal solution to \eqref{OP1}, \eqref{IF3} has the self-similar form \eqref{U8} and
\begin{equation}\label{I5}
\eta(t)=\xi_* t^{\frac{1}{1+p-\alpha(mp-1)}}, ~~~0\leq t<+\infty,
\end{equation}
where $\xi_*$ and $f$ solve \eqref{selfsimilarODE1}-\eqref{selfsimilarODE1bc}.  We have the estimation
\begin{equation}\label{E9}
C_4t^{\frac{\alpha}{1+p-\alpha(mp-1)}}(\xi_3-\xi)_+^{\frac{p}{mp-1}}\leq u\leq C_5t^{\frac{\alpha}{1+p-\alpha(mp-1)}}(\xi_4-\xi)_+^{\frac{p}{mp-1}},
~x\geq 0,~t\geq 0,
\end{equation}
(see \cref{sec: appendix}). If $\alpha=p/(mp-1)$, then $\xi_3=\xi_4=\xi_*$ and both lower and upper estimations in \eqref{E9} coincide with the solution \eqref{U7}. 

(2) If $\alpha =(1+p)/(mp-1)$, then interface initially remains stationary. Explicit solution to \eqref{OP1}, \eqref{IF3} is
\begin{equation}\label{U9}
u_C(x,t)=C(-x)_+^{(1+p)/(mp-1)}[{\lambda}(t_* -t)(1-mp)]^{1/(1-mp)}, \, x\in \mathbb{R}, \, 0\leq t<t_*,
\end{equation}
where
\[t_*=1/\lambda(1-mp), \text{with} ~\lambda = -C^{mp-1}\frac{p(m+1)(m(1+p))^{p}}{(mp-1)^{1+p}}.\]

(3) If $\alpha >(1+p)/(mp-1)$, then interface again remain stationary, and for $\forall \epsilon>0~\exists~x_{\epsilon}<0$ and $\delta_{\epsilon}>0$ such that: 
\begin{equation}\label{E10}
(C-\epsilon)(-x)_+^{\alpha}\leq u \leq (C+\epsilon)(-x)_+^{\alpha}(1-\epsilon t)^{1/(1-mp)},~ x_{\epsilon}\leq x, ~0\leq t\leq \delta_{\epsilon}.
\end{equation}

	\section{Preliminary Results}\label{sec: preliminary results} 
The prelude of the mathematical theory of the nonlinear degenerate parabolic equations is the papers
 \cite{zeldovich,Barenblatt1} (see also \cite{Barenblatt2}), where instantaneous point source type particular solutions were constructed and analyzed. The property of finite speed of propagation and the existence of compactly supported nonclassical solutions and interfaces became a motivating force of the general theory.
Mathematical theory of nonlinear degenerate parabolic equations began with the paper \cite{Oleinik} on the porous medium equation (\eqref{OP1} with $p=1$). Currently there is a well established general theory of the nonlinear degenerate parabolic equations (see \cite{Vazquez2,Dibe-Sv,Abdulla6,Abdulla7,Abdulla8,shmarev}. Boundary value problems for \eqref{OP1} been have been investigated in \cite{Kalashnikov3,Est-Vazquez,Tsutsumi,Ishige, Ivanov1, Ivanov2}.

\begin{definition}[Strong Solution]\label{def: weak soln} (\cite{Est-Vazquez,Tsutsumi}) Let $u_0\in L^1(\mathbb{R})$ and nonnegative. 
A measurable nonnegative function $u(x,t)$ defined in $\mathbb{R} \times (0,T)$ is a strong solution of \eqref{OP1}, \eqref{IF1} if 
\begin{equation}\label{strongsolution}\begin{cases}
u\in C([0,T); L^1(\mathbb{R})\cap L^\infty([\delta,T)\times \mathbb{R}), \ \forall \delta>0,\\
u^m(t,\cdot)\in W^{1,\infty}(\mathbb{R}), \ a.e. \ 0<t<T,\\
u_t, (|(u^m)_x|^{p-1}(u^m)_x)_x \in L^1_{loc}(0,T; L^1(\mathbb{R})),
\end{cases}
\end{equation}
and $u$ satisfies  \eqref{OP1}, \eqref{IF1} for a.e. $0<t<T, x\in \mathbb{R}$.
\end{definition}
Existence, uniqueness and comparison theorems for the strong solution of the CP  \eqref{OP1}, \eqref{IF1} was proved in \cite{Est-Vazquez} for the case $b=0$, and in \cite{Tsutsumi} for $b> 0$. In \cite{Est-Vazquez} it is proved that the strong solution of \eqref{OP1}, $b=0$, is locally H\"{o}lder continuous. Local H\"{o}lder continuity of the locally bounded weak solutions (accordingly, strong solutions) of the general second order multidimensional nonlinear degenerate parabolic equations with double degenerate diffusion term is proved in \cite{Ivanov1,Ivanov2}. The following is the standard comparison result, which is widely used throughout the paper: 
\begin{lemma}\label{CT}
Let $g$ be a non-negative amd continuous function in $\overline{Q}$, where:
\[Q = \{(x,t): \eta_0(t) < x < +\infty, \, 0 < t  < T \leq +\infty \}\]
$g = g(x,t)$ is in $C^{2,1}_{x,t}$ in $Q$ outside a finite number of curves: $x = \eta_j(t)$, which divide $Q$ into a finite number of subdomains: $Q^j$, where $\eta_j \in C[0,T]$; for arbitrary $\delta > 0$ and finite $\Delta \in (\delta, T]$ the function $\eta_j$ is absolutely continuous in $[\delta, \Delta]$. Let $g$ satisfy the inequality:
\[ Lg\equiv g_t-\Big(|(g^{m})_x|^{p-1}(g^{m})_x\Big)_x+bg^{\beta} \geq 0, (\leq 0),\]
at the points of $Q$ where $g \in C^{2,1}_{x,t}$. Assume also that the function: $|(g^{m})_x|^{p-1}(g^{m})_x$ is continuous in $Q$ and $g \in L^{\infty}(Q \cap (t \leq T_1))$ for any finite $T_1 \in (0,T]$. If in addition we have that:
\[g(\eta_0(t),t) \geq  (\leq) \, u(\eta_0(t),t), \,\, g(x,0) \geq (\leq) \, u(x,0), \]
then
\[g \geq (\leq) \, u, \, \, \text{in} \, \, \overline{Q}\]
\end{lemma}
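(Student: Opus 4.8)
\emph{Proof strategy.} We treat the supersolution case $Lg\ge 0$ and prove $g\ge u$ on $\overline Q$; the subsolution case is symmetric (interchange the roles of $u$ and $g$ throughout). The plan is to show that $(u-g)_+\equiv 0$ by the test‑function method used in uniqueness proofs for nonlinear degenerate parabolic equations (cf.\ \cite{Oleinik,Est-Vazquez,Tsutsumi}), the engine being the monotonicity of $s\mapsto|s|^{p-1}s$ and of $s\mapsto s^{\beta}$ on $[0,+\infty)$.

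Subtracting the identity $Lu=0$ from $Lg\ge0$ yields, in the distributional sense on $Q$,
\[(u-g)_t-\big[\,(|(u^m)_x|^{p-1}(u^m)_x)_x-(|(g^m)_x|^{p-1}(g^m)_x)_x\,\big]+b(u^\beta-g^\beta)\le 0.\]
The hypothesis that $|(g^m)_x|^{p-1}(g^m)_x$ is continuous across the curves $x=\eta_j(t)$ is essential here: since $y\mapsto\mathrm{sgn}(y)|y|^{1/p}$ is a homeomorphism of $\mathbb R$, it forces $(g^m)_x$ itself to be continuous, so there are no singular (jump) contributions and the pointwise inequality ``off the curves'' upgrades to the distributional one. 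Fix $t\in(0,T)$. We would test the displayed inequality against $\varphi_{\epsilon,R}:=\sigma_\epsilon(u^m-g^m)\,\zeta_R(x)$ and integrate over $Q_t:=Q\cap\{0<\tau<t\}$, where $\sigma_\epsilon\in C^\infty(\mathbb R)$ is nondecreasing with $\sigma_\epsilon\equiv0$ on $(-\infty,0]$ and $\sigma_\epsilon\equiv1$ on $[\epsilon,+\infty)$, and $\zeta_R$ is a smooth spatial cutoff with $\zeta_R\equiv1$ for $x\le R$, $\zeta_R\equiv0$ for $x\ge2R$, $|\zeta_R'|\le c/R$ (we cut off only near $+\infty$; near the left boundary of $Q$ we exploit the given boundary data). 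The decisive point is that the test function is built from $u^m-g^m$, not from $u-g$: after integrating the diffusion term by parts in $x$ — the boundary term at $x=\eta_0(\tau)$ vanishing because $u\le g$ there forces $\sigma_\epsilon(u^m-g^m)=0$, the one at $x=2R$ vanishing by the cutoff — the leading contribution is
\[\int_{Q_t}\sigma_\epsilon'(u^m-g^m)\,\big[\,|(u^m)_x|^{p-1}(u^m)_x-|(g^m)_x|^{p-1}(g^m)_x\,\big]\,\big[(u^m)_x-(g^m)_x\big]\,\zeta_R\ \ge\ 0\]
by monotonicity of $s\mapsto|s|^{p-1}s$, plus a remainder $J_{\epsilon,R}$ supported in $\{R\le x\le2R\}$.

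Now let $\epsilon\downarrow0$. Because $s\mapsto s^{m}$ is strictly increasing on $[0,+\infty)$, $\mathrm{sgn}^+(u^m-g^m)=\mathrm{sgn}^+(u-g)$, so $\sigma_\epsilon(u^m-g^m)\to\mathbf{1}_{\{u>g\}}$ a.e.; by dominated convergence (using $u_t\in L^1_{\mathrm{loc}}$ and the local smoothness of $g$) the time term tends to $\int_{Q_t}\partial_\tau(u-g)_+\,\zeta_R\,dx\,d\tau=\int(u-g)_+(x,t)\,\zeta_R\,dx$, the initial contribution vanishing since $u_0\le g(\cdot,0)$ and the contribution along $x=\eta_0(\tau)$ vanishing since $u\le g$ there. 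The absorption term tends to $b\int_{Q_t}(u^\beta-g^\beta)\mathbf{1}_{\{u>g\}}=b\int_{Q_t}(u^\beta-g^\beta)_+\ge0$ by monotonicity of $s\mapsto s^{\beta}$, hence carries the favourable sign. Discarding the nonnegative diffusion term we obtain $\int(u-g)_+(x,t)\,\zeta_R\,dx\le-J_R$, where $J_R=\lim_{\epsilon\to0}J_{\epsilon,R}$; a routine cutoff estimate, using $|\zeta_R'|\le c/R$ together with the integrability in \cref{def: weak soln} and the bound $g\in L^\infty(Q\cap\{t\le T_1\})$, shows $J_R\to0$ as $R\to\infty$, while $\int(u-g)_+(x,t)\zeta_R\,dx\uparrow\int(u-g)_+(x,t)\,dx$. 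Hence $\int(u-g)_+(x,t)\,dx\le0$ for a.e.\ $t\in(0,T)$, and continuity of $u$ and $g$ forces $(u-g)_+\equiv0$ on $\overline Q$, i.e.\ $u\le g$.

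The step I expect to be the main obstacle is handling the mismatch between the variable $u-g$ appearing in the time derivative and the variable $u^m-g^m$ natural to the parabolic operator: this is what dictates testing with $\sigma_\epsilon(u^m-g^m)$, and one must then pass to the limit $\epsilon\to0$ in the time term through dominated convergence and the pointwise identity $\mathrm{sgn}^+(u^m-g^m)=\mathrm{sgn}^+(u-g)$, rather than by a single integration by parts in $\tau$. A secondary technicality is legitimizing the integration by parts (and the vanishing of the lateral boundary term) with only the regularity at hand — $u$ a strong solution in the sense of \cref{def: weak soln}, $g$ piecewise $C^{2,1}$ with continuous flux — which is exactly what the hypotheses are tailored to provide, if necessary after first regularizing $u$ and passing to the limit.
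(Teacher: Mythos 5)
The paper never actually proves \cref{CT}: it is introduced as ``the standard comparison result'' and used as a black box, the underlying comparison machinery being attributed to \cite{Est-Vazquez,Tsutsumi} and to the analogous lemmas in \cite{Abdulla1,Abdulla3}. So there is no in-paper argument to measure yours against; on its own terms, your $L^1$ test-function scheme --- testing against $\sigma_\epsilon(u^m-g^m)\zeta_R$, exploiting monotonicity of $s\mapsto|s|^{p-1}s$ and of $s\mapsto s^\beta$, and using continuity of the flux of $g$ to rule out singular contributions along the curves $x=\eta_j(t)$ --- is the standard route and is structurally sound. In particular you correctly identify the role of the flux-continuity hypothesis, and you correctly use only the sign, not Lipschitz continuity, of $u^\beta-g^\beta$, which matters because $0<\beta<1$ is allowed.

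The step I do not accept as written is the claim that ``a routine cutoff estimate'' gives $J_R\to0$. Writing $F_v:=|(v^m)_x|^{p-1}(v^m)_x$, the remainder is $J_{\epsilon,R}=\int\sigma_\epsilon(u^m-g^m)\big[F_u-F_g\big]\zeta_R'$, and the bound $|\zeta_R'|\le c/R$ integrated over an interval of length $\sim R$ yields only $O(1)\cdot\sup|F_u-F_g|$, not $o(1)$. For $u$ this is repairable: \cref{def: weak soln} gives $(F_u)_x\in L^1(\mathbb{R})$ for a.e.\ $t$ together with $u^m(t,\cdot)\in W^{1,\infty}$ and $u(t,\cdot)\in L^1$, which forces $F_u(x,t)\to0$ as $x\to+\infty$. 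But for $g$ the hypotheses supply only $g\in L^\infty$ and continuity of $F_g$; nothing prevents $F_g$ from staying bounded away from zero on $\{u>g\}\cap\{x\ge R\}$, in which case $J_R\not\to0$ and the argument stalls. You need an additional ingredient: either observe that in every application in the paper the comparison region is bounded in $x$ or $u(\cdot,t)$ is compactly supported (finite speed of propagation, $mp>1$), so that $\{u>g\}\cap\{x\ge R\}=\emptyset$ for large $R$ and the remainder vanishes identically, or impose/verify decay of $(g^m)_x$ at $x=+\infty$. A second, smaller caveat: the lemma is invoked for minimal solutions with $u_0=C(-x)_+^\alpha\notin L^1(\mathbb{R})$, so the global $L^1$ regularity of \cref{def: weak soln} that your time-term and tail arguments lean on is not automatically available; one should localize the whole computation to $Q$ (or argue on approximating solutions and pass to the limit). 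Neither objection touches the core monotonicity mechanism, but both must be dealt with before the proof is complete at the stated level of generality.
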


Suppose that $u_0\in L^1_{loc}(\mathbb{R})$, and may have unbounded growth as $|x|\rightarrow +\infty$. It is well known that in this case some restriction must be imposed on the growth rate for existence, uniqueness of the solution to the CP \eqref{OP1}, \eqref{IF1}. For the particular cases of the equation \eqref{OP1} with $b=0$ this question was settled down in \cite{BCP,HerreroPierre} for the porous medium equation ($p=1$) with slow ($m>1$) and fast ($0<m<1$) diffusion; and in \cite{DibeHerrero1,DibeHerrero2} for the $p$-Laplacian equation ($m=1$) with slow ($p>1$) and fast ($0<p<1$) diffusion; The case of reaction-diffusion equation $m>1,p=1,b > 0$ is analyzed in \cite{Kalashnikov4,KPV,Abdullaev1}. Surprisingly, only a partial result is available for the double-degenerate PDE \eqref{OP1}. The sharp sufficient condition for the existence of the solution to the CP for \eqref{OP1}, $b=0$  
is established in \cite{Ishige}. In particular, it follows from \cite{Ishige} that the CP \eqref{OP1},\eqref{IF3} has a solution
if and only if $\alpha\leq(1+p)/(mp-1)$. Moreover, solution is global ($T=+\infty$) if $\alpha<(1+p)/(mp-1)$ and only local in time if $\alpha=(1+p)/(mp-1)$. Uniqueness of the solution is an open problem. For our purposes it is satisfactory to employ the notion of the minimal solution.
\begin{definition}[Minimal Solution]\label{def: minimal soln} Let $u_0\in L_{loc}^1(\mathbb{R})$ and nonnegative. Nonnegative weak solution of the CP  \eqref{OP1}, \eqref{IF1} is called a {\it minimal solution} if
\begin{equation}\label{minimalsolution}
0\leq u(x,t)\leq v(x,t),
\end{equation}
for any nonnegative weak solution $v$ of the same problem  \eqref{OP1}, \eqref{IF1}.
\end{definition}
Note that the minimal solution is unique by definition. The following standard comparison result is true in the class of minimal solutions:
\begin{lemma}\label{CT2} Let $u$ and $v$ be minimal solutions of the CP  \eqref{OP1}, \eqref{IF1}. If
\begin{equation*}
u(x,0)\geq (\leq) \ v(x,0), \ x\in \mathbb{R},
\end{equation*}
then
\begin{equation*}
u(x,t)\geq (\leq) \ v(x,t), \ (x,t)\in \mathbb{R}\times (0,T).
\end{equation*}
\end{lemma}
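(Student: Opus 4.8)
The plan is to reduce \cref{CT2} to the classical comparison principle for strong solutions, by realizing each minimal solution as the monotone increasing limit of strong solutions with truncated, compactly supported initial data. It suffices to treat the case $u(x,0)\ge v(x,0)$; the reverse inequality follows by interchanging $u$ and $v$.

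First I would fix the approximating sequences. Given nonnegative $u_0\in L^1_{loc}(\mathbb{R})$, set $u_{0,n}(x)=\min\{u_0(x),n\}\,\chi_{[-n,n]}(x)$, so that each $u_{0,n}\in L^1(\mathbb{R})\cap L^\infty(\mathbb{R})$ has compact support, $0\le u_{0,n}\le u_{0,n+1}$, and $u_{0,n}\uparrow u_0$ a.e. By the existence, uniqueness and comparison theory of \cite{Est-Vazquez} ($b=0$) and \cite{Tsutsumi} ($b>0$), the CP \eqref{OP1}, \eqref{IF1} with datum $u_{0,n}$ has a unique strong solution $u_n$, which, since $mp>1$, has the finite speed of propagation and hence compact support in $x$ on each interval $[0,T_1]$, $T_1<T$. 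The comparison principle gives monotonicity $u_n\le u_{n+1}$, so $\tilde u:=\lim_n u_n$ is well defined; combining the interior H\"older estimates of \cite{Est-Vazquez,Ivanov1,Ivanov2} with the $L^1$/$L^\infty_{loc}$ bounds, one passes to the limit in the weak formulation and checks that $\tilde u$ is a nonnegative weak solution of \eqref{OP1}, \eqref{IF1} attaining $u_0$ in $L^1_{loc}$. Minimality then follows because, for any nonnegative weak solution $w$ of the same problem, $w(\cdot,0)\ge u_{0,n}$ and $u_n$ vanishes outside a compact set, so a comparison argument on a half-space lying to the right of $\mathrm{supp}\,u_n(\cdot,t)$ (cf.\ \cref{CT}) yields $w\ge u_n$, hence $w\ge\tilde u$; by uniqueness of the minimal solution, $\tilde u=u$. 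The same construction yields $v=\lim_n v_n$ with $v_n$ the strong solution of datum $v_{0,n}$.

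Finally, since $u_0\ge v_0$ pointwise we have $u_{0,n}\ge v_{0,n}$ for all $n$; the comparison theorem for strong solutions of \cite{Est-Vazquez,Tsutsumi} then gives $u_n\ge v_n$ in $\mathbb{R}\times(0,T)$, and letting $n\to\infty$ gives $u\ge v$. I expect the main obstacle to be the middle step — proving that the increasing limit $\tilde u$ is a bona fide weak solution which is dominated by every weak solution of the same Cauchy problem. This rests on the interior regularity theory of \cite{Ivanov1,Ivanov2} to secure compactness for the limit passage, and on the finite-speed-of-propagation control of the supports of the $u_n$ on compact time intervals, which is precisely what renders the solution-to-solution comparison on the unbounded domain legitimate in spite of the possibly unbounded growth of $u_0$ at infinity.
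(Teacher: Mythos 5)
The paper does not actually prove \cref{CT2}: it is stated as ``the following standard comparison result is true in the class of minimal solutions,'' with the underlying existence, uniqueness and comparison machinery imported from \cite{Est-Vazquez,Tsutsumi,Ishige}, so there is no in-paper argument to match yours against. Your proposal supplies the canonical proof that the authors leave implicit, and its architecture is sound: realize each minimal solution as the monotone limit of strong solutions with truncated, compactly supported data; transfer the ordering $u_{0,n}\ge v_{0,n}$ through the comparison theorem for strong solutions of \cite{Est-Vazquez,Tsutsumi}; and pass to the limit. The two places where the real work lives are exactly the ones you flag. First, the identification $\tilde u=u$ requires both that the monotone limit $\tilde u$ is a genuine weak solution attaining the data (this is where the H\"older estimates of \cite{Ivanov1,Ivanov2} and local $L^1$/$L^\infty$ bounds enter) and that every weak solution $w$ of the same problem dominates each $u_n$; for the latter your half-space comparison is the right device, since $u_n(\cdot,t)$ vanishes near the lateral boundary by finite speed of propagation, but note that \cref{CT} as stated in the paper assumes the comparison function is bounded on bounded time slabs, so you are comparing the bounded, compactly supported $u_n$ (as the ``$g$'') against $w$ restricted to a half-space where the problem is well posed, rather than invoking a global comparison for possibly unbounded solutions. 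Second, your construction tacitly asserts that the minimal solution exists (i.e.\ that the class of weak solutions is nonempty and the infimum is attained by $\tilde u$); in the paper this is exactly what \cite{Ishige} and \cite{Tsutsumi} are cited for, and under the growth restriction $\alpha\le(1+p)/(mp-1)$ used throughout it holds. With those caveats made explicit, your argument is correct and is, in effect, the missing proof.
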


If the function $u(x,t)$ is a minimal solution to CP \eqref{OP1}, \eqref{IF3} with $b=0$, then the function:
\[\bar u(x,t)=\exp(-bt)u(x,(b(1-mp))^{-1}(\exp(-b(mp-1))t-1)),\] 
is a minimal solution to \eqref{OP1} with $ b\neq 0~\text{and}~ \beta=1$. Hence, from the above mentioned result it follows that the unique minimal solution to CP \eqref{OP1}, \eqref{IF3} with $mp>1, b>0, ~\beta =1, \text{and}~ \alpha=(1+p)/(mp-1)$, is the function $\bar u_C(x,t)$ from \eqref{U6}.

In the following lemmas  we establish some preliminary estimations of the solution to the CP.

\begin{lemma}\label{lemma 1}
 If $b=0, 0 < \alpha < (1+p)/(mp-1)$, then the minimal solution $u$ of the CP \eqref{OP1}, \eqref{IF3} has a self-similar form \eqref{U8}, where the self-similarity function $f$ satisfies \eqref{F1}. If $u_0$ satisfies \eqref{IF2}, then the solution to CP \eqref{OP1}, \eqref{IF1} satisfies \eqref{I1}-\eqref{U1}.
\end{lemma}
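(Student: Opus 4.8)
The plan is to treat the pure-power datum \eqref{IF3} first, extracting the self-similar structure and the relations \eqref{F1} by scaling, and then pass to a general datum satisfying \eqref{IF2} by a rescaling (zoom-in) argument built on \cref{CT}. \emph{Step 1 (pure-power datum, self-similarity).} With $b=0$, equation \eqref{OP1} is invariant under the one-parameter group $T_\lambda u(x,t)=\lambda^{-\mu}u(\lambda^\nu x,\lambda t)$, where $\mu=\alpha/(1+p-\alpha(mp-1))$ and $\nu=1/(1+p-\alpha(mp-1))$: a direct computation of the derivatives shows $T_\lambda$ sends weak solutions to weak solutions, and since $\mu=\alpha\nu$ it fixes the datum $C(-x)_+^{\alpha}$. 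Because $T_\lambda$ is order preserving and invertible (inverse $T_{1/\lambda}$) on the set of weak solutions with this datum, it fixes its minimal element, i.e.\ $T_\lambda u=u$ for every $\lambda>0$; taking $\lambda=1/t$ gives the self-similar representation \eqref{U8} with $f(\xi):=u(\xi,1)$. Inserting \eqref{U8} into \eqref{OP1} with $b=0$ produces the ODE \eqref{selfsimilarODE1}, while \eqref{selfsimilarODE1bc} follows by matching the leading term of $u_0$ as $x\to0^-$ together with finite speed of propagation (valid since $mp>1$); the latter also shows the right interface is $\eta(t)=\xi_*t^{\nu}$ with $\xi_*=\sup\{\xi:f(\xi)>0\}$ and $f\equiv0$ beyond $\xi_*$.

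\emph{Step 2 (dependence on $C$).} Let $w$ be the minimal solution with $C=1$. One checks that $v(x,t):=C\,w(x,C^{mp-1}t)$ solves \eqref{OP1} with $b=0$ and has datum $C(-x)_+^{\alpha}$; since the correspondence $w\mapsto v$ is an order-preserving bijection on solution sets, $u=v$ by uniqueness of the minimal solution. Writing $w(x,t)=t^{\mu}f_0(xt^{-\nu})$ with $f_0(\rho)=w(\rho,1)$ and simplifying the exponents via the identity $1+(mp-1)\mu=(1+p)\nu$ yields \eqref{F1} and \eqref{XI1}. The positivity $\xi'_*>0$ — i.e.\ that the interface genuinely expands, which is where the hypothesis $\alpha<(1+p)/(mp-1)$ enters — will follow from a self-similar subsolution with positive interface, equivalently from the lower bound in \eqref{E9} at $C=1$, $t=1$. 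The refined identity \eqref{XI2} comes from one further scaling of the ODE \eqref{selfsimilarODE1}, expressing $\xi'_*$ through $A_0=f_0(0)$ and a universal number $\xi''_*$ trapped between the constants of \eqref{E9}; the explicit entries \eqref{XI5} and, in the borderline case $\alpha=p/(mp-1)$ (so $\nu=1$), the travelling-wave solution \eqref{U7} are verified by direct substitution into \eqref{OP1}.

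\emph{Step 3 (general datum).} Fix $\epsilon>0$ and, using \eqref{IF2} and $\eta(0)=0$, choose $\delta_\epsilon>0$ with $(C-\epsilon)(-x)_+^{\alpha}\le u_0(x)\le(C+\epsilon)(-x)_+^{\alpha}$ on $[-\delta_\epsilon,0]$ and $u_0\equiv0$ on $[0,\infty)$. Apply the rescaling $u_k:=T_ku$; its datum $k^{-\mu}u_0(k^\nu x)$ converges to $C(-x)_+^{\alpha}$ locally uniformly as $k\downarrow0$. Using \cref{CT} I would squeeze $u_k$, on a fixed strip $\{x\ge-L,\ 0\le t\le1\}$, between the pure-power self-similar solutions with amplitudes $C\mp\epsilon'$ — the comparison of initial data being immediate for $k$ small, the lateral comparison on $x=-L$ being arranged by a coupled choice of $L$ and $\delta_\epsilon$ — and then let $k\to0$ and $\epsilon'\to0$. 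By Step 2 this forces $u_k(\cdot,1)\to f$ locally uniformly and $k^{-\nu}\eta(k)\to\xi_*$, and undoing the scaling (with $t=k$, $x=\rho t^{\nu}$) yields \eqref{U1} along $x=\xi_\rho(t)$ and \eqref{I1} for $\eta$.

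\emph{Main obstacle.} Because uniqueness of weak solutions is open, the whole argument must be carried out for minimal solutions via \cref{CT}, so the barriers in Step 3 have to be genuine super/sub-solutions of the fully nonlinear operator $L$ up to the moving interface and must dominate (be dominated by) $u$ on the lateral boundary $x=-L$ for all small $t$; making this self-consistent forces the delicate coupled choice of $L(\epsilon)$ and $\delta_\epsilon$, and relies on the sharp interface profile $(\xi_*-\xi)_+^{p/(mp-1)}$ from \eqref{E9} (non-degeneracy of the limit near its interface) to promote convergence of solutions to convergence of interfaces. Pinning down $\xi'_*>0$ together with the constant in \eqref{XI2} is the other point requiring real quantitative input on the ODE \eqref{selfsimilarODE1} near $\xi'_*$, beyond its mere solvability.
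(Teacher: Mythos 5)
Your proposal is correct and follows essentially the same route as the paper: scaling invariance plus minimality to force the self-similar form \eqref{U8}, the amplitude rescaling $w\mapsto C\,w(x,C^{mp-1}t)$ (together with $1+(mp-1)\mu=(1+p)\nu$) for \eqref{F1}--\eqref{XI1}, and a comparison-principle sandwich between the exactly self-similar solutions with data $(C\pm\epsilon)(-x)_+^{\alpha}$ for \eqref{I1}--\eqref{U1}. The only cosmetic difference is that your Step 3 passes through a zoom-in limit $k\downarrow 0$, whereas the paper, since $b=0$ makes the barriers exactly self-similar for all $t$, evaluates them directly along $x=\xi_\rho(t)$ on the fixed strip $\{x\ge x_\epsilon,\ 0\le t\le\delta\}$ and lets $t\to 0^+$, then $\epsilon\to 0^+$.
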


\begin{lemma}\label{lemma 2}
If $0<\alpha<\frac{1+p}{mp-\min(1, \beta)}$, then solution to the CP \eqref{OP1}-\eqref{IF2} satisfies \eqref{U1}.
\end{lemma}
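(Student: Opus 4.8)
The plan is to reduce the general case of an initial function satisfying the asymptotic condition \eqref{IF2} to the exactly homogeneous case \eqref{IF3} already treated in \cref{lemma 1} (when $b=0$) and in the analysis underlying \cref{theorem 2} (when $0<\beta<1$), by trapping $u$ between two minimal solutions with pure power-law data and letting the perturbation parameter tend to zero. First I would fix a small $\epsilon>0$. By \eqref{IF2}, there exists $x_\epsilon<0$ such that $(C-\epsilon)(-x)_+^\alpha \le u_0(x) \le (C+\epsilon)(-x)_+^\alpha$ for $x_\epsilon \le x$. On the strip $x \ge \eta_0(t)$ for a suitably chosen left boundary curve $\eta_0$, I would apply the comparison principle (\cref{CT}, or \cref{CT2} in the class of minimal solutions) against the minimal solutions $u^{\pm}_\epsilon$ of the CP \eqref{OP1}, \eqref{IF3} with $C$ replaced by $C\pm\epsilon$; the boundary values along $\eta_0$ and the initial values are ordered by construction, so $u^{-}_\epsilon \le u \le u^{+}_\epsilon$ near the interface. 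The technical point here, exactly as in the case distinctions (4b), (4c), (4d) and case (3) of the $b=0$ results, is to show that the region where the ordering of initial data holds (namely $x \ge x_\epsilon$) can be converted, after the short time $\delta_\epsilon$, into a region containing a full neighborhood of the interface of $u^{\pm}_\epsilon$; this uses the finite speed of propagation ($mp>1$) and the fact that the interface of $u^{\pm}_\epsilon$ moves like $t^{1/(1+p-\alpha(mp-1))}$ (or $t^{(mp-\beta)/((1+p)(1-\beta))}$ in the $\beta<1$ subrange), which is $o(1)$, so for $t$ small the relevant part of the support stays to the right of $x_\epsilon$.

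Next, since $C\pm\epsilon$ enters the self-similar profile only through the explicit scaling relation \eqref{F1} (respectively through the $C$-dependence of $f_1$ and $\zeta_*$ in \cref{theorem 2}), the asymptotics \eqref{U1} for $u^{\pm}_\epsilon$ read, along the curve $x=\rho\,t^{1/(1+p-\alpha(mp-1))}$,
\[
u^{\pm}_\epsilon(x,t)\sim t^{\alpha/(1+p-\alpha(mp-1))}\,(C\pm\epsilon)^{1+p/(1+p-\alpha(mp-1))}\,f_0\!\Big((C\pm\epsilon)^{(mp-1)/(\alpha(mp-1)-(1+p))}\rho\Big),\quad t\to 0^+,
\]
and the right-hand side is continuous in $\epsilon$ at $\epsilon=0$, with limit $t^{\alpha/(1+p-\alpha(mp-1))}f(\rho)$. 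Combining this with the squeeze $u^{-}_\epsilon \le u \le u^{+}_\epsilon$ gives, for each fixed $\epsilon$,
\[
(1-o(1))\,u^{-}_\epsilon(x,t) \le u(x,t) \le (1+o(1))\,u^{+}_\epsilon(x,t),\qquad t\to 0^+,
\]
hence $\limsup_{t\to 0^+} u(x,t)\,t^{-\alpha/(1+p-\alpha(mp-1))} \le f_{+\epsilon}(\rho)$ and the analogous $\liminf \ge f_{-\epsilon}(\rho)$, where $f_{\pm\epsilon}$ is the profile with constant $C\pm\epsilon$. Letting $\epsilon\downarrow 0$ and using continuity of $f_{\pm\epsilon}(\rho)$ in $\epsilon$ then pins the limit to $f(\rho)$, which is precisely \eqref{U1}. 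The case $0<\beta<1$ with $\alpha<(1+p)/(mp-\beta)$ is handled identically, with $f$ replaced by the relevant profile and the scaling exponent adjusted; note that when $\beta<1$ and $\alpha<(1+p)/(mp-1)$ the absorption term is lower-order near $t=0$ and the leading asymptotics coincide with the $b=0$ profile, so \cref{lemma 1} applies verbatim to $u^{\pm}_\epsilon$.

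The main obstacle I expect is not the scaling bookkeeping but the construction of the comparison domain near the interface: one must choose the auxiliary left-boundary curve $\eta_0(t)$ (and the time $\delta_\epsilon$) so that simultaneously (i) $\eta_0(t)\ge x_\epsilon$ for $0\le t\le\delta_\epsilon$, so that the initial/boundary ordering from \eqref{IF2} is available there, and (ii) the support of $u^{\pm}_\epsilon$ near its interface lies entirely in $\{x\ge\eta_0(t)\}$, so that the asymptotic formula \eqref{U1} for $u^{\pm}_\epsilon$ is actually being compared with $u$ on the curve $x=\rho t^{1/(1+p-\alpha(mp-1))}$. This is where one invokes finite speed of propagation for $mp>1$ together with the sublinear (indeed, vanishing as $t\to 0^+$) growth of the interface position; the argument is parallel to the boundary-layer constructions already used in the proofs of Theorems \ref{theorem 1}–\ref{theorem 4}, so I would reuse those barrier functions rather than build new ones. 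Once this localization is in place, the remainder is the routine squeeze-and-continuity argument sketched above.
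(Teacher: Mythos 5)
Your outer skeleton (trap $u_0$ between $(C\pm\epsilon)(-x)_+^{\alpha}$, compare with the minimal solutions $u_{\pm\epsilon}$ of the power-law problem, squeeze, then let $\epsilon\downarrow 0$ using the explicit $C$-dependence \eqref{F1}) is exactly the paper's first and last steps. But the central step of the lemma is missing. The functions $u_{\pm\epsilon}$ solve the \emph{full} equation \eqref{OP1} with $b>0$, and for general $\alpha$ that problem is \emph{not} self-similar: the absorption term breaks the scaling invariance. So you cannot assert that ``\cref{lemma 1} applies verbatim to $u_{\pm\epsilon}$'' or that these solutions obey the self-similar asymptotics with profile $f(\cdot\,;C\pm\epsilon)$ --- that is precisely the statement Lemma~\ref{lemma 2} exists to prove, and in your write-up it is assumed rather than derived. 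Saying ``the absorption term is lower-order near $t=0$'' names the phenomenon but is not an argument.

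The paper closes this gap with a nonlinear scaling argument: the rescaled family $u_k^{\pm\epsilon}(x,t)=k\,u_{\pm\epsilon}\bigl(k^{-1/\alpha}x,\ k^{(\alpha(mp-1)-(1+p))/\alpha}t\bigr)$ solves the same equation with absorption coefficient $b\,k^{(\alpha(mp-\beta)-(1+p))/\alpha}$ (problem \eqref{OP2}). The hypothesis $0<\alpha<(1+p)/(mp-\min(1,\beta))$ is exactly what forces the exponent $(\alpha(mp-\beta)-(1+p))/\alpha$ to be negative, so as $k\to+\infty$ the absorption coefficient vanishes and $u_k^{\pm\epsilon}$ converges to the minimal solution $v_{\pm\epsilon}$ of the $b=0$ problem, which \emph{is} self-similar by \cref{lemma 1}. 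Undoing the change of variables $\tau=k^{(\alpha(mp-1)-(1+p))/\alpha}t$ converts this large-$k$ limit into the short-time asymptotics \eqref{U16} for $u_{\pm\epsilon}$, after which your squeeze-and-continuity argument goes through. Without this step (or some substitute, e.g.\ explicit sub/supersolution barriers quantifying the effect of $bu^{\beta}$ over $[0,t]$), the proof is incomplete. A secondary point: your closing remark about the $0<\beta<1$ case needing ``the scaling exponent adjusted'' conflates the range of Lemma~\ref{lemma 2} with the borderline case $\alpha=(1+p)/(mp-\beta)$ of \cref{theorem 2}; throughout Lemma~\ref{lemma 2} the exponent and profile are always those of the $b=0$ problem. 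By contrast, the localization issue you flag as the main obstacle (choosing the comparison domain near the interface) is handled in the paper by the routine continuity argument \eqref{E11}--\eqref{E13} and is not where the difficulty lies.
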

In the next lemma we analyze special class of finite travelling wave solutions. By a finite travelling-wave solution with velocity $0 \neq k \in \mathbb{R}$ we mean a solution $u(x,t) = \phi((kt-x))$, where $\phi(y) \geq 0$, $\phi \not\equiv 0$, and $\phi(y) = 0$ for $y \leq y_0$ for some $y_0 \in \mathbb{R}$. 
\begin{lemma}\label{lemma tw}
Let $0<\beta <1, p(m+\beta) > 1+p$. PDE \eqref{OP1} admits a finite travelling-wave solution, $u(x,t) = \phi((kt-x))$, with $\phi(y) = 0$ for $y\leq 0$; $\phi(y)>0$, for $y>0$, and:
\begin{align}\label{LIM1TW}
\lim_{y \to +\infty} y^{-\frac{1+p}{mp-\beta}}\phi(y) = C_*. 
\end{align}
\end{lemma}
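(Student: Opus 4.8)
The plan is to reduce the construction to a scalar first‑order ODE in a phase plane, solve it near the degenerate interface point by a fixed‑point argument, continue it globally, and then read off the behaviour at $+\infty$. Substituting the travelling‑wave ansatz $u(x,t)=\phi(y)$, $y=kt-x$, into \eqref{OP1} and using that $\phi$ will be nondecreasing (so $(\phi^m)'\ge0$) gives the ODE
\[
\bigl(((\phi^m)')^p\bigr)'=k\phi'+b\phi^\beta,\qquad y>0,\qquad \phi(0)=0,\ \ \phi>0\ \text{on}\ (0,\infty).
\]
Take any $k>0$ (for $k<0$ a nonnegative flux vanishing at the interface is impossible in this range, since then $((\phi^m)')^p\sim k\phi<0$ near $y=0$). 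Introduce the flux variable $\psi:=((\phi^m)')^p$; from $\psi=m^p\phi^{p(m-1)}(\phi')^p$ one has $\phi'=\tfrac1m\phi^{1-m}\psi^{1/p}$, and eliminating $y$ turns the ODE into the scalar equation
\[
\frac{d\psi}{d\phi}=k+bm\,\phi^{\,m+\beta-1}\,\psi^{-1/p},\qquad \phi>0,\qquad \psi(0)=0 .
\]

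Next I would produce a unique local solution near $\phi=0$. In integral form, $\psi(\phi)=k\phi+bm\int_0^\phi s^{m+\beta-1}\psi(s)^{-1/p}\,ds$, and this is a contraction on a small ball around $\psi=k\phi$ precisely because the hypothesis $p(m+\beta)>1+p$, i.e. $m+\beta-1>1/p$, makes both $\int_0^\phi s^{m+\beta-1}(ks)^{-1/p}\,ds$ and the Lipschitz integral $\int_0^\phi s^{m+\beta-1}(ks)^{-1/p-1}\,ds$ converge at $s=0$. Hence $\psi(\phi)=k\phi\,(1+o(1))$ as $\phi\downarrow0$, and recovering $\phi(y)$ from $y=\int_0^\phi \tfrac{m s^{m-1}}{\psi(s)^{1/p}}\,ds$ (which converges at $0$ because $mp>1$) yields $\phi(y)\sim a_0\,y^{p/(mp-1)}$ as $y\downarrow0^+$, with $a_0=\bigl(k/(ms_0)^p\bigr)^{1/(mp-1)}$, $s_0=p/(mp-1)$. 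In particular $\phi(0^+)=(\phi^m)'(0^+)=0$ and the flux $\psi$ is continuous across $y=0$, so after setting $\phi\equiv0$ for $y\le0$ the function $u(x,t)=\phi(kt-x)$ is a legitimate weak solution of \eqref{OP1} across the free boundary $x=kt$; this is the same local structure as for the $b=0$ travelling wave, which is exactly what $p(m+\beta)>1+p$ buys us.

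For $\phi,\psi>0$ the right‑hand side is smooth, $\psi'\ge k>0$ keeps $\psi$ positive and increasing, and $k\phi\le\psi(\phi)\le k\phi+\tfrac{bmk^{-1/p}}{m+\beta-1/p}\phi^{\,m+\beta-1/p}$ rules out blow‑up, so $\psi$, and hence $\phi$ (increasing to $+\infty$), is defined on all of $(0,\infty)$; this also justifies the monotonicity used at the start. For the behaviour at $+\infty$, put $\Psi:=\psi^{(p+1)/p}$, so that $\Psi'=\tfrac{p+1}{p}\bigl(k\Psi^{1/(p+1)}+bm\phi^{m+\beta-1}\bigr)$. Since $p(m+\beta)>1+p$ forces $(m+\beta)/(p+1)<m+\beta-1$, once one has the two‑sided a priori bound $c\,\phi^{m+\beta}\le\Psi\le M\,\phi^{m+\beta}$ for large $\phi$ (lower bound by discarding the $k$‑term, upper bound by comparison with the ODE supersolution $M\phi^{m+\beta}$), the $k$‑term becomes strictly lower order and a bootstrap gives $\Psi(\phi)=\tfrac{bm}{m+\beta}\phi^{m+\beta}(1+o(1))$, i.e. $\psi(\phi)=\bigl(\tfrac{(p+1)bm}{p(m+\beta)}\bigr)^{p/(p+1)}\phi^{\,p(m+\beta)/(p+1)}(1+o(1))$. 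Inserting this in $(\phi^m)'=\psi^{1/p}$ and integrating gives $\phi(y)\sim A\,y^{(1+p)/(mp-\beta)}$, and $A=C_*$ is forced because the leading balance at $+\infty$ is the stationary equation $(((\phi^m)')^p)'\sim b\phi^\beta$, whose power solution is exactly $C_*\,y^{(1+p)/(mp-\beta)}$; equivalently $A^{mp-\beta}=b\big/\bigl((mq)^p\,p(mq-1)\bigr)$ with $q=(1+p)/(mp-\beta)$ and $mq-1=(m+\beta)/(mp-\beta)$, which is the definition of $C_*$. This gives \eqref{LIM1TW}.

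The main obstacle is the last step: turning the ``subcritical drift'' heuristic at $+\infty$ into a rigorous statement, i.e. showing the trajectory $\psi(\phi)$ is genuinely attracted to the $C_*\,y^{(1+p)/(mp-\beta)}$ profile and does not settle on a different leading coefficient. The two‑sided polynomial bound on $\Psi$ is what closes the bootstrap, and obtaining it cleanly for all admissible exponents---in particular handling $p>1$ and $0<p<1$ separately in the supersolution comparison---is the delicate point. The local fixed‑point step at the degenerate point $\phi=0$ is the other technical ingredient, but there $p(m+\beta)>1+p$ is, by design, exactly the integrability threshold, so that step is essentially routine.
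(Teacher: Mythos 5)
Your proposal is correct and follows the same skeleton as the paper's proof: travelling-wave ansatz, reduction to the first-order phase-plane ODE $\frac{d\psi}{d\phi}=k+bm\,\phi^{m+\beta-1}\psi^{-1/p}$, $\psi(0)=0$ (this is exactly \eqref{TWIVP2}), global continuation, determination of the growth of $\psi$ at infinity, and integration of $(\phi^m)'=\psi^{1/p}$ as in \eqref{TW2} to recover $\phi$ and the constant $C_*$; your final constant computation matches $C_*^{mp-\beta}=b(mp-\beta)^{1+p}/((m(1+p))^p p(m+\beta))$. The one step you treat by a genuinely different argument is the key asymptotic \eqref{TWS1}: you establish it via explicit power-law barriers $c\,\phi^{m+\beta}\le\Psi\le M\phi^{m+\beta}$ for $\Psi=\psi^{(p+1)/p}$ followed by a bootstrap, whereas the paper rescales $Y_l(X)=lY(l^{\gamma}X)$ with $\gamma=-(1+p)/(p(m+\beta))$, extracts a locally uniformly convergent subsequence by Arzel\`a--Ascoli and Cantor diagonalization, and identifies the limit with the unique explicit solution of the limiting ODE obtained by dropping the $k$-term. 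Your route is more elementary and self-contained, and the delicacy you flag at the end is not really there: the supersolution inequality for $M\phi^{m+\beta}$ reduces to $(m+\beta)/(p+1)<m+\beta-1$, which is precisely $p(m+\beta)>1+p$ and holds uniformly in $p$, so no case split between $p>1$ and $p<1$ is needed. The paper's scaling-plus-compactness argument, by contrast, requires no guessed barriers and is the template reused in its Lemmas 2--4. Your contraction-mapping analysis at the degenerate point $\phi=0$ supplies detail that the paper only imports from the cited literature for the cases $p=1$ and $m=1$.
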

\begin{lemma}\label{lemma 3}
If $0<\beta<1, \alpha=(1+p)/(mp-\beta)$, then the minimal solution $u$ to the CP \eqref{OP1}, \eqref{IF3} has a self-similar form \eqref{U3}, where the self-similarity function $f$ satisfies \eqref{selfsimilarNODE3},\eqref{selfsimilarNODE3bc}. If $C>C_*$, then $f_1(0)=A_1>0$, where $A_1$ depends on $m, p, \beta, C$ and $b$. If $u_0$ satisfies \eqref{IF2} with $\alpha=(1+p)/(mp-\beta) \text{and} ~C>C_*$, then the solution to CP \eqref{OP1}, \eqref{IF1} satisfies: 
\begin{equation}\label{U10}
u(0,t)\sim A_1 t^{1/(1-\beta)}, ~\text{as}~ t\rightarrow 0^+
\end{equation}
\end{lemma}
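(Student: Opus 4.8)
The plan is to reduce \eqref{OP1}, \eqref{IF3} to a self-similar ODE by scaling, to extract positivity of the shape function at the origin from a travelling-wave barrier, and finally to transfer the conclusion to the general datum \eqref{IF2} by localization and sandwiching.

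\emph{Step 1: self-similar reduction.} First I would observe that in the critical range $\alpha=(1+p)/(mp-\beta)$ the rescaling $\mathcal T_\lambda u(x,t):=\lambda^{\frac{1}{1-\beta}}u\big(\lambda^{-\frac{mp-\beta}{(1+p)(1-\beta)}}x,\lambda^{-1}t\big)$ sends weak solutions of \eqref{OP1}, \eqref{IF3} to weak solutions of the same problem: the three exponents are tuned so that $u_t$, $(|(u^m)_x|^{p-1}(u^m)_x)_x$ and $bu^\beta$ acquire a common factor and the power datum \eqref{IF3} is left fixed. Since $\mathcal T_\lambda$ is an order-preserving bijection of the set of weak solutions, it carries the minimal solution to the minimal solution, so uniqueness forces $\mathcal T_\lambda u\equiv u$ for all $\lambda>0$; taking $\lambda=t$ gives the self-similar form \eqref{U3} with $f_1(\zeta):=u(\zeta,1)$, and substituting \eqref{U3} into \eqref{OP1} produces \eqref{selfsimilarNODE3}. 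Finite speed of propagation ($mp>1$, $\operatorname{supp}u_0=(-\infty,0]$) will give $\zeta_*:=\eta(1)<+\infty$ with $f_1>0$ on $(-\infty,\zeta_*)$ and $f_1\equiv0$ on $[\zeta_*,+\infty)$, hence $f_1(+\infty)=0$, while $f_1(\zeta)\sim C(-\zeta)^{\alpha}$ as $\zeta\to-\infty$ follows from $u(x,t)\to u_0(x)=C(-x)^{\alpha}$ as $t\to0^+$ for fixed $x<0$, read in the similarity variable $\zeta=xt^{-(mp-\beta)/((1+p)(1-\beta))}$ --- the exponents of \eqref{U3} being arranged so that $t^{1/(1-\beta)}(-\zeta)^{\alpha}=(-x)^{\alpha}$. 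This part runs exactly as in \cref{lemma 1}.

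\emph{Step 2: positivity at the origin for $C>C_*$, the main step.} Here I would need to show $\zeta_*>0$, equivalently $u(0,t)>0$ for some (hence every) $t>0$. Comparing via \cref{CT2} with the stationary solution $C_*(-x)_+^{\alpha}$ --- legitimate since $u_0\ge C_*(-x)_+^{\alpha}$ --- gives $f_1(\zeta)\ge C_*(-\zeta)_+^{\alpha}$, which already excludes $\zeta_*<0$, so everything reduces to ruling out $\zeta_*=0$. That the interface strictly expands is a genuinely nonlinear fact, invisible to power-type comparison (it degenerates to equality at $C=C_*$), and this is where \cref{lemma tw} enters. In the case $p(m+\beta)>1+p$, the finite travelling wave $\phi(kt-x)$ has $k>0$, $\phi(y)\sim C_*y^{\alpha}$ as $y\to+\infty$ and $\phi(y)\sim cy^{p/(mp-1)}$ near its front with $p/(mp-1)<\alpha$; consequently, since $C>C_*$, a translate $w(x,t):=\phi(kt-x+a)$ with $|a|$ large satisfies $w(\cdot,0)\le u_0$ on $\mathbb R$, and scaling \eqref{OP1} produces a one-parameter family of such waves, all with the same asymptotic constant $C_*$, so the crossing time $|a|/k$ can be made as small as one likes. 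I would then apply \cref{CT} on $\{x>x_0,\ 0<t<T\}$ with $x_0<a$ fixed and $T$ slightly above the crossing time, controlling the lateral trace of $w$ over the short interval $[0,T]$ by $u(x_0,t)\to u_0(x_0)=C(-x_0)^{\alpha}$ (where $\phi$ only attains values near $C_*(-x_0)^{\alpha}<C(-x_0)^{\alpha}$), to conclude $w\le u$ on this region and hence $u(0,T)\ge w(0,T)>0$. For $p(m+\beta)=1+p$ the explicit solution \eqref{U2} already has $\zeta_*>0$ and $f_1(0)=C\zeta_*^{(1+p)/(mp-\beta)}>0$, and for $p(m+\beta)<1+p$ the same barrier argument works with a travelling wave whose front carries the exponent $\alpha$ (coefficient $C_*$) and whose growth at $+\infty$ is slower than $(-x)^{\alpha}$, so it again fits under $u_0$ and its positive-speed front forces $\eta(t)>0$. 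In all cases $A_1:=f_1(0)$ is determined by $m,p,\beta,C,b$, and continuous dependence of the minimal solution on $C$ makes $C\mapsto A_1$ continuous.

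\emph{Step 3: general data and the main obstacle.} For $u_0$ satisfying \eqref{IF2} with $\alpha=(1+p)/(mp-\beta)$ and $C>C_*$, I would fix $\epsilon\in(0,C-C_*)$ and $x_\epsilon<0$ with $(C-\epsilon)(-x)_+^{\alpha}\le u_0\le(C+\epsilon)(-x)_+^{\alpha}$ on $(x_\epsilon,0]$, truncate $u_0$ at $x_\epsilon$ and use finite speed of propagation to localize the effect of the far field, so that for $0<t\le\delta_\epsilon$ the solution near $x=0$ is squeezed between the self-similar solutions of \eqref{OP1}, \eqref{IF3} with constants $C-\epsilon$ and $C+\epsilon$ from Steps 1--2. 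Evaluating \eqref{U3} for those at $x=0$ yields the bounds $A_1(C-\epsilon)\,t^{1/(1-\beta)}$ and $A_1(C+\epsilon)\,t^{1/(1-\beta)}$, and letting $\epsilon\to0$ with continuity of $A_1$ in $C$ gives \eqref{U10}; this localization-and-sandwich scheme is the one behind \cref{lemma 1,lemma 2}. The hard part of the whole proof is the exclusion of $\zeta_*=0$ in Step 2: strict expansion of the interface for $C>C_*$ cannot be obtained from comparison with explicit power barriers (it saturates at $C=C_*$) and must instead be squeezed out of a precisely placed travelling-wave subsolution that simultaneously stays below the unbounded datum $C(-x)_+^{\alpha}$ and admits a lateral comparison lasting long enough for its front to overtake the origin --- which is exactly why \cref{lemma tw} and its scaling family are needed.
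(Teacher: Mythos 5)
Your Steps 1 and 3 and the subcase $p(m+\beta)>1+p$ of Step 2 are essentially the paper's argument (scaling invariance of the minimal solution, the translated travelling wave of \cref{lemma tw} as a lower barrier, and the rescaling/sandwich limit for general data). The genuine gap is in your treatment of the subcase $p(m+\beta)<1+p$. First, \cref{lemma tw} only constructs the finite travelling wave when $p(m+\beta)>1+p$, so you cannot simply invoke ``the same barrier.'' Second, if one does build the genuine finite travelling wave in the regime $p(m+\beta)<1+p$, its front indeed carries the exponent $\alpha$ with coefficient $C_*$ (there the diffusion--absorption balance holds near $X=0$ in \eqref{TWIVP2}), but its far-field behaviour is governed by the balance $Y\sim kX$, giving $\phi(y)\sim c\,y^{p/(mp-1)}$ with $p/(mp-1)>\alpha$ precisely because $p(mp-\beta)-(1+p)(mp-1)=1+p-p(m+\beta)>0$. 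So the wave grows \emph{faster} than $(-x)^{\alpha}$ at infinity, the opposite of what you assert, and it cannot be placed globally below $u_0=C(-x)_+^{\alpha}$; a global comparison as in your Step 2 therefore fails. If instead you meant the exact power profile $C_*(kt-x)_+^{\alpha}$, the computation of $Lg$ shows the bracket reduces to the strictly positive term $C_*^{1-\beta}\tfrac{1+p}{b(mp-\beta)}(t-x)_+^{(1+p-p(m+\beta))/(mp-\beta)}$, i.e.\ $g$ is a supersolution near the front, not a subsolution, so it gives no lower bound.

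Moreover, your blanket claim that ``strict expansion cannot be obtained from comparison with explicit power barriers'' is false exactly in this regime, and the paper's proof exploits that: take $g(x,t)=C_1(t-x)_+^{(1+p)/(mp-\beta)}$ with $C_1\in(C_*,C)$ strictly in between. Then
\begin{equation*}
Lg=bC_1^{\beta}(t-x)_+^{\frac{\beta(1+p)}{mp-\beta}}\Big[1-(C_1/C_*)^{mp-\beta}+C_1^{1-\beta}\tfrac{1+p}{b(mp-\beta)}(t-x)_+^{\frac{1+p-p(m+\beta)}{mp-\beta}}\Big],
\end{equation*}
and since $1+p-p(m+\beta)>0$ the positive correction vanishes as $t-x\to0^+$ while the deficit $1-(C_1/C_*)^{mp-\beta}<0$ persists, so $Lg\le0$ on $Q=\{x_1\le x<t,\ 0<t\le\delta\}$ for suitable $x_1<0$, $\delta>0$. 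Because $C_1<C$, $g$ lies below $u$ on the parabolic boundary of $Q$ (the lateral trace at $x=x_1$ is controlled for short time by continuity), and \cref{CT} gives $u(0,t_0)\ge C_1t_0^{\alpha}>0$. Note this is a \emph{local} comparison on a bounded slab, which is also how you could salvage a travelling-wave barrier despite its excessive far-field growth; but the explicit power subsolution is both simpler and already sufficient. The remaining ingredients of your proposal (the borderline $p(m+\beta)=1+p$ via \eqref{U2}, the stationary barrier $C_*(-x)_+^{\alpha}$ ruling out $\zeta_*<0$) are fine.
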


\begin{lemma}\label{lemma 4}
If $0<\beta<1, \text{and}~ \alpha>(1+p)/(mp-\beta)$, then the solution $u$ to the CP \eqref{OP1}-\eqref{IF2} satisfies \eqref{U5}. 
\end{lemma}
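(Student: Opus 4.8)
The plan is to prove \eqref{U5} by trapping $u$ between two barriers built from the solution of the pure absorption ODE $w_t=-bw^{\beta}$, whose solution with initial value $a>0$ is $w(t)=\big[a^{1-\beta}-b(1-\beta)t\big]_+^{1/(1-\beta)}$. First, using \eqref{IF2}, fix for each small $\epsilon>0$ a number $x_{\epsilon}\in(-\infty,0)$ with $(C-\epsilon)(-x)_+^{\alpha}\le u_0(x)\le(C+\epsilon)(-x)_+^{\alpha}$ for $x_{\epsilon}\le x\le 0$, and work by comparison via \cref{CT} on the half-strip $Q_{\epsilon}=\{\,x_{\epsilon}<x<+\infty,\ 0<t<\tau_{\epsilon}\,\}$. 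Then the problem reduces to (i) checking the parabolic boundary data on $\{t=0\}$ (the displayed inequality for $u_0$) and on $\{x=x_{\epsilon}\}$ (which follows from continuity of $u$ together with the choice of $x_{\epsilon}$ and $\tau_{\epsilon}$, or, if a uniform bound is wanted, from comparing $u$ from above with the $b=0$ solution having the same data), and (ii) verifying a one-sided differential inequality for each barrier.

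For the upper barrier take $\bar u_{\epsilon}(x,t)=\big[(C+\epsilon)^{1-\beta}(-x)_+^{\alpha(1-\beta)}-b(1-\beta)(1-\epsilon)t\big]_+^{1/(1-\beta)}$ and for the lower barrier $\underline u_{\epsilon}(x,t)=\big[(C-\epsilon)^{1-\beta}(-x)_+^{\alpha(1-\beta)}-b(1-\beta)t\big]_+^{1/(1-\beta)}$. Differentiating in $t$ gives $\bar u_{\epsilon,t}+b\bar u_{\epsilon}^{\beta}=b\epsilon\,\bar u_{\epsilon}^{\beta}$ and $\underline u_{\epsilon,t}+b\underline u_{\epsilon}^{\beta}=0$, whence $L\bar u_{\epsilon}=b\epsilon\,\bar u_{\epsilon}^{\beta}-\big(|(\bar u_{\epsilon}^{m})_x|^{p-1}(\bar u_{\epsilon}^{m})_x\big)_x$ and $L\underline u_{\epsilon}=-\big(|(\underline u_{\epsilon}^{m})_x|^{p-1}(\underline u_{\epsilon}^{m})_x\big)_x$. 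The crux is a scaling count: along any curve $x=-\ell t^{1/(\alpha(1-\beta))}$ with $\ell>\ell_{*}$ both barriers scale like $t^{1/(1-\beta)}$, so the absorption-type terms scale like $t^{\beta/(1-\beta)}$, whereas the diffusion term $\big(|(v^{m})_x|^{p-1}(v^{m})_x\big)_x$ scales like $t^{\,pm/(1-\beta)-(p+1)/(\alpha(1-\beta))}$; the hypothesis $\alpha>(1+p)/(mp-\beta)$ is precisely the strict inequality $pm/(1-\beta)-(p+1)/(\alpha(1-\beta))>\beta/(1-\beta)$, so the diffusion term is of higher order in $t$ there, and more generally on every region of the form $\{x\ge -\ell' t^{1/(\alpha(1-\beta))}\}$ with $\ell'>\ell_{*}$. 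On such a region one computes $\big(|(\bar u_{\epsilon}^{m})_x|^{p-1}(\bar u_{\epsilon}^{m})_x\big)_x$ explicitly and checks that for $t\le\tau_{\epsilon}$ small it is dominated by the slack $b\epsilon\,\bar u_{\epsilon}^{\beta}$, so $L\bar u_{\epsilon}\ge0$, and that $\big(|(\underline u_{\epsilon}^{m})_x|^{p-1}(\underline u_{\epsilon}^{m})_x\big)_x\ge 0$ there, so $L\underline u_{\epsilon}\le 0$. Then \cref{CT} yields $\underline u_{\epsilon}\le u\le\bar u_{\epsilon}$ on $Q_{\epsilon}$; evaluating along $x=-\ell t^{1/(\alpha(1-\beta))}$ and sending $\epsilon\to0$ gives \eqref{U5}.

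The main obstacle is that \cref{CT} requires the differential inequality on all of $Q_{\epsilon}$, hence also in a neighbourhood of each barrier's own interface $x=-\ell_{*,\epsilon}t^{1/(\alpha(1-\beta))}$, where the absorption term degenerates like a power of the distance $d$ to the interface while the diffusion term behaves like $d^{\,p(m+\beta-1)/(1-\beta)-1}$, so that the ratio diffusion/absorption $\sim d^{\,(p(m+\beta)-(1+p))/(1-\beta)}$; its behaviour as $d\to0$, and the sign of the diffusion term there (governed by the sign of $m+\beta-1$), decide whether the naive barrier is admissible. When $p(m+\beta)\ge1+p$ both naive barriers are admissible up to their interfaces. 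When $p(m+\beta)<1+p$, exactly one of them (the upper barrier if $m+\beta>1$, the lower barrier if $m+\beta\le1$, with the borderline case $m+\beta=1$ also requiring attention because $|(g^{m})_x|^{p-1}(g^{m})_x$ then fails to be continuous at the interface) has a diffusion term that overpowers the absorption term near its interface and must be modified in a thin layer there, by replacing the profile $(\,\cdot\,)_+^{1/(1-\beta)}$ with a flatter power-type profile carrying the free-boundary exponent $(1+p)/(mp-\beta)$ (as in \eqref{E4}, cf. the finite travelling wave of \cref{lemma tw}), glued to the ODE profile so that $|(g^{m})_x|^{p-1}(g^{m})_x$ stays continuous and $Lg$ keeps the required sign, and without altering $g$ along the curves $x=-\ell t^{1/(\alpha(1-\beta))}$, $\ell>\ell_{*}$, that we ultimately evaluate on. Carrying out this gluing, together with the choices of $\tau_{\epsilon}$ and $x_{\epsilon}$ and the a priori control of $u$ on $x=x_{\epsilon}$, is the delicate part; the remainder is the routine scaling bookkeeping sketched above.
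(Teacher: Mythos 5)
Your proposal takes a genuinely different route from the paper. The paper proves \cref{lemma 4} by nonlinear rescaling and compactness: for $u_k^{\pm\epsilon}(x,t)=ku_{\pm\epsilon}(k^{-1/\alpha}x,k^{\beta-1}t)$ the hypothesis $\alpha>(1+p)/(mp-\beta)$ makes the coefficient $k^{(1+p-\alpha(mp-\beta))/\alpha}$ in front of the diffusion term vanish as $k\to+\infty$, and a uniform barrier $g(x,t)=(C+1)(1+x^2)^{\alpha/2}e^{t}$ together with uniform H\"older continuity (as in Lemma 3.4 of \cite{Abdulla1}) yields subsequential convergence to solutions of the pure reaction equation $v_t=-bv^{\beta}$, whose explicit solution is exactly the right-hand side of \eqref{U5}; evaluating along $x=\eta_{\ell}(\tau)$ gives \eqref{U17} and hence \eqref{U5}. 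You instead build explicit sub/supersolutions from the reaction ODE profile and invoke \cref{CT}. Your scaling count is correct: $\alpha>(1+p)/(mp-\beta)$ is precisely the condition under which the diffusion term is of higher order in $t$ than $bu^{\beta}$ along $x=-\ell t^{1/(\alpha(1-\beta))}$, and your diagnosis of the obstruction at the barrier's own free boundary (ratio $\sim d^{(p(m+\beta)-(1+p))/(1-\beta)}$, sign governed by $m+\beta-1$) is also correct. In the regime $p(m+\beta)\geq 1+p$ your argument is in substance the one the paper runs in the proof of \cref{theorem 3}, where the barriers $g_{\pm\epsilon}$ appear verbatim, so there it goes through; note that $p(m+\beta)\geq 1+p$ forces $m+\beta>1$ and the hypothesis forces $\alpha m>1$, which is what makes your claim that the diffusion term of the lower barrier is nonnegative actually hold.

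The genuine gap is the case $p(m+\beta)<1+p$. The glued barrier you describe there --- ODE profile away from the front, a $(\cdot)_+^{(1+p)/(mp-\beta)}$ profile in a thin layer, with $|(g^{m})_x|^{p-1}(g^{m})_x$ continuous across the matching curve and $Lg$ of one sign on both sides --- is asserted, not constructed, and it is not a routine step. It is telling that when the paper itself needs a one-sided barrier in this regime (the sharp upper estimate in the proof of \cref{theorem 3}), it does not push the barrier through the interface layer: it restricts the comparison to $\{x>\eta_{\ell}(t)\}$ and supplies the lateral data on $x=\eta_{\ell}(t)$ from \eqref{U5} itself (see \eqref{E26}). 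That device is unavailable to you, since \eqref{U5} is what you are proving; borrowing it would be circular. Completing your route in this sub-case therefore requires carrying out the matching explicitly --- checking that the layer profile with exponent $(1+p)/(mp-\beta)$ and front moving like $t^{1/(\alpha(1-\beta))}$ (rather than $t^{(mp-\beta)/((1+p)(1-\beta))}$ as in \eqref{E4}) keeps the sign of $Lg$, that the flux is continuous at the matching curve, and that the layer does not alter the values along the test curves as $t\to0^{+}$ --- none of which is done. The paper's compactness argument treats all sub-cases uniformly and avoids the interface layer altogether, which is the substantive advantage of its approach over yours.
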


\begin{proof}[Proof of \cref{lemma 1}] 
Let $u$ be a unique minimal solution of the problem  \eqref{OP1}, \eqref{IF3}. If we consider a function:
\begin{equation}\label{U11}
u_k(x,t)=ku\big(k^{-1/\alpha}x,k^{(\alpha(mp-1)-(1+p))/\alpha}t\big),~ k>0,
\end{equation}
it may easily be checked that this satisfies \eqref{OP1}, \eqref{IF3}. Since $u$ is a minimal solution we have: 
\begin{equation}\label{U12}
u(x,t)\leq ku\big(k^{-1/\alpha}x,k^{(\alpha(mp-1)-(1+p))/\alpha}t\big),~k>0.
\end{equation}
By changing the variable in \eqref{U12} as
\begin{equation}\label{changeofvar}
y= k^{-1/\alpha}x, \ \tau=k^{(\alpha(mp-1)-(1+p))/\alpha}t,
\end{equation}
we derive \eqref{U12} with $k$ replaced with $k^{-1}$. Since $k>0$ is arbitrary, \eqref{U12} follows with "=". 
If we choose $k=t^{\alpha/(1+p-\alpha(mp-1))},$ the latter implies \eqref{U8} with $f(\xi)=u(\xi,1)$, where $f$ is a nonnegative and continuous solution of \eqref{selfsimilarODE1},\eqref{selfsimilarODE1bc}. By \cite{Barenblatt1}, PDE
\eqref{OP1} has a finite speed of propagation property, and minimal solution of \eqref{OP1}, \eqref{IF3} has an expanding interface. Therefore, the upper bound $\xi_*$ of the support of $f$ is positive and finite;  $f$ is positive and smooth for $\xi<\xi_*$ and $f=0$ for $\xi\geq \xi_*.$ Thus, \eqref{I5} is valid. Proof of \eqref{XI1} and \eqref{F1} coincide with the proof given in Lemma 3.2 of \cite{Abdulla1}.

Now suppose that $u_0$ satisfies \eqref{IF2}. Then for arbitrary sufficiently small $\epsilon>0$, there exists an $x_{\epsilon}<0$ such that:
\begin{equation}\label{E11}
(C-\epsilon)(-x)_+^{\alpha}\leq u_0(x) \leq (C+\epsilon)(-x)_+^{\alpha},~x\geq x_{\epsilon}.
\end{equation}
Let $u_{\epsilon}(x,t)$ (respectively, $u_{-\epsilon}(x,t)$) be a minimal solution to the CP \eqref{OP1}, \eqref{IF1} with initial data $(C+\epsilon)(-x)_+^{\alpha}$ (respectively, $(C-\epsilon)(-x)_+^{\alpha}$). Since the solution to the CP \eqref{OP1}, \eqref{IF1} is continuous, there exists a number $\delta=\delta(\epsilon)>0$ such that:
\begin{equation}\label{E12}
u_{\epsilon}(x_{\epsilon},t)\geq u(x_{\epsilon},t),~u_{-\epsilon}(x_{\epsilon},t)\leq u(x_{\epsilon},t),~\text{for}~0\leq t \leq \delta.
\end{equation}
From \eqref{E11}, \eqref{E12}, and by applying the comparison result, \eqref{CT}, it follows that:
\begin{equation}\label{E13}
u_{-\epsilon}\leq u \leq u_{\epsilon},~\text{for}~x\geq x_{\epsilon},~0\leq t\leq \delta. 
\end{equation}
We have:
\begin{equation}\label{U14}
u_{\pm \epsilon}(\xi_{\rho}(t),t)=t^{\alpha/(1+p-\alpha(mp-1))}f(\rho;C\pm \epsilon), ~\forall  \rho < \xi_*, ~t\geq 0.
\end{equation}
(Furthermore, we denote the right-hand side of \eqref{a1} by $f(\rho, C)$). Now taking $x=\xi_{\rho}(t)$ in \eqref{E13}, after multiplying by $t^{-\alpha/(1+p-\alpha(mp-1))}$~and passing to the limit, first as $t\rightarrow 0^+$ and then as $\epsilon\rightarrow 0^+$,
we can easily derive \eqref{U1}. Similarly, from \eqref{E13}, \eqref{U14} and \eqref{I5}, \eqref{I1} easily follows. The lemma is proved.
\end{proof}

\begin{proof}[Proof of \cref{lemma 2}] 
As in the proof of \cref{lemma 1}, \eqref{E11} and \eqref{E12} follow from \eqref{IF2}. From \cite{Ishige} and \eqref{CT2} it follows that the existence, uniqueness, and comparison result for the minimal solution of the CP \eqref{OP1}, \eqref{IF1} with $u_0=(C\pm \epsilon)(-x)_+^{\alpha}  \text{ and } T=+\infty$ hold. As before, from \eqref{E11} and \eqref{E12}, \eqref{E13} follows. For arbitrary $k>0$, the function
\begin{equation}\label{U15}
u_k^{\pm \epsilon}(x,t)=ku_{\pm \epsilon}\Big(k^{-1/\alpha}x, k^{(\alpha(mp-1)-(1+p))/\alpha}t\Big), \, k>0,
\end{equation}
is a minimal solution of the following problem:
\begin{subequations}\label{OP2}
\begin{align}
u_t-\Big(|(u^{m})_x|^{p-1}(u^{m})_x\Big)_x+bk^{(\alpha(mp-\beta)-(1+p))/\alpha}u^{\beta}=0,\, x\in \mathbb{R},\, t>0, \label{c1}\\
u(x,0)=(C\pm \epsilon)(-x)_+^{\alpha}, \, x\in \mathbb{R}. \label{IF4}
\end{align}
\end{subequations}
Since $\alpha(mp-\beta)-(1+p)<0,$ it follows that:
\begin{equation}\label{LIM1}
\underset{k\rightarrow+\infty}\lim u_k^{\pm \epsilon}(x,t)=v_{\pm\epsilon}(x,t),~x\in \mathbb{R},~t\geq 0,
\end{equation}
where $v_{\pm \epsilon}$ is a minimal solution to CP \eqref{OP1}, \eqref{IF1} with $b=0,~u_0=(C\pm \epsilon)(-x)_+^{\alpha}, \text{and}~T=+\infty$.~Hence,~$v_{\pm \epsilon}$ satisfies \eqref{U14}. Taking $x=\xi_{\rho}(t),$ where $\rho<\xi_*$ is fixed, from \eqref{LIM1} it follows that for arbitrary $t>0$
\begin{equation}\label{LIM2}
\underset{k\rightarrow+\infty}\lim ku_{\pm \epsilon}\Big(k^{-1/\alpha}\xi_{\rho}(t), k^{(\alpha(mp-1)-(1+p))/\alpha}t\Big)=t^{\alpha(1+p-\alpha(mp-1))}f(\rho;C\pm \epsilon).
\end{equation}
Letting $\tau= k^{(\alpha(mp-1)-(1+p))/\alpha}t,$ then \eqref{LIM2} implies:
\begin{equation}\label{U16}
u_{\pm \epsilon}(\xi_{\rho}(\tau),\tau)\sim \tau^{\alpha/(1+p-\alpha(mp-1))}f(\rho;C\pm \epsilon),~\text{as}~\tau\rightarrow 0^{+}.
\end{equation}
As before, \eqref{U1} easily follows from \eqref{E13} and \eqref{U16}. The lemma is proved.
\end{proof}
\begin{proof}[Proof of \cref{lemma tw}] 
Plugging $u(x,t)=\phi((kt-x))$ into \eqref{OP1} and choosing $y_0 = 0$ we have the following intial value problem for $\phi$:
\begin{equation}\label{TWIVP1}\begin{cases}
\big(|(\phi^{m})'|^{p-1}(\phi^{m})'\big)'- k\phi'-b\phi^{\beta} = 0, ~0\leq y < +\infty \\
\phi(0) = (\phi^m)'(0) = 0,
\end{cases}\end{equation}
Proof of the existence and uniqueness of the solution to \eqref{TWIVP1}, which is monotonically increasing with asymptotic formula \eqref{LIM1TW} is known in particular cases $p=1$ \cite{HerreroVazquez1} and $m=1$ \cite{Mu1}. The standard proof based on phase plane analysis applies with minor modifications. By introducing new variables: 
\begin{equation}\label{phaseplane}
X = \phi, \ Y = ((\phi^m)')^{p},
\end{equation} 
we have the following problem ODE problem in phase plane:
\begin{equation}\label{TWIVP2}\begin{cases}
\frac{dY}{dX} = k + bmX^{m+\beta-1}Y^{-\frac{1}{p}}, \\
Y(0) = 0.
\end{cases}\end{equation}
Since $m+\beta>1$, similar proof as in \cite{Mu1} implies the existence and uniqueness of the global increasing solution of \eqref{TWIVP2}.  Next, we employ a scaling argument to prove: 
\begin{equation}\label{TWS1}
Y(X) \sim \Bigg[\frac{bm(1+p)}{p(m+\beta)}\Bigg]^{\frac{p}{1+p}}X^{\frac{p(m+\beta)}{1+p}}, \text { as } X \rightarrow +\infty.
\end{equation}
Rescaled function:
\begin{equation}
Y_l(X) = lY(l^{\gamma}X), \ \gamma = -\frac{1+p}{p(m+\beta)}, \ l>0,
\end{equation}
solves the problem:
\begin{equation}\label{Scale2}
\frac{dY_l}{dX} = kl^{\frac{p(m+\beta)-(1+p)}{p(m+\beta)}}+bmX^{m+\beta-1}Y_l^{-\frac{1}{p}}, \ Y_l(0)=0.
\end{equation}
As in \eqref{TWIVP2}, there exists a unique global solution $Y_l$ of \eqref{Scale2}. It can be easily shown that the sequences $\{Y_l\}$ and $\{dY_l/dX\}$ are bounded in every fixed compact subset $\mathbb{R}^+$ uniformly for $l\in (0,1]$. By choosing the expanding sequence of compact subsets of $\mathbb{R}^+$, and by applying Arzela-Ascoli theorem and Cantor's diagonalization, it follows that there is a sub-sequence $\{Y_{l'}\}$ which converges as $l'\downarrow 0$ in $\mathbb{R}^+$, and the convergence is uniform on compact subsets of $\mathbb{R}^+$. Since the limit function is a unique solution of the problem \eqref{Scale2} with $l=0$, we have       
\begin{equation}\label{Scale3}
\lim_{l \downarrow 0} Y_l(X) =  \Bigg[\frac{bm(1+p)}{p(m+\beta)}\Bigg]^{\frac{p}{1+p}}X^{\frac{p(m+\beta)}{1+p}}.
\end{equation}
By changing the variable $Z = l^{\gamma}X$, from \eqref{Scale3}, \eqref{TWS1} follows.  

Let $Y$ be a solution of the problem \eqref{TWIVP2}. Note that the problem:
\begin{equation}\label{TW2}
\frac{d\phi^m}{dy}=Y^{\frac{1}{p}}(\phi(y)), \ \phi(0)=0,
\end{equation}
has a unique maximal solution in $(-\infty, M)$, such that $\phi \equiv 0$ for $y\leq 0$, and $\phi>0$ for $0<y<M$. Moreover, whether $M <$ or $= +\infty$, we have $\phi(y) \to +\infty$ as $y\to M-$. From \eqref{TW2} it follows that:
\begin{equation}\label{TWINT1}
m \displaystyle \int_{0}^{\phi(y)}X^{m-1}Y^{-\frac{1}{p}}(X)dX = y, 0<y<M.
\end{equation}
Passing to the limit as $y\to M^-$, from \eqref{TWINT1}, \eqref{TWS1} it follows that $M=+\infty$, and accordingly $\phi$ is a unique global solution of \eqref{TW2}. Equivalently, this implies that $\phi$ is a solution of \eqref{TWIVP1}. By integrating \eqref{TW2}, and by using \eqref{TWS1}, asymptotic formula \eqref{LIM1TW} easily follows.
\end{proof}

\begin{proof}[Proof of \cref{lemma 3}] 
Let $u$ be a unique minimal solution of the problem  \eqref{OP1}, \eqref{IF3} \cite{Ishige,Tsutsumi}. Rescaled function:
\begin{equation}\label{NU11}
u_k(x,t)=ku(k^{-1/\alpha}x,k^{\beta - 1}t),~ k>0,
\end{equation}
satisfies \eqref{OP1}, \eqref{IF3}, and therefore:
\begin{equation}\label{NU12}
u(x,t)\leq ku(k^{-1/\alpha}x,k^{\beta -1}t),~k>0.
\end{equation}
As in the proof of \cref{lemma 1}, it follows that \eqref{NU12} is true with equality sign.
If we choose $k=t^{1/(1-\beta)},$ then \eqref{NU12} implies \eqref{U3} with $f_1(\zeta)=u(\zeta,1)$, where $f_1$ is a nonnegative and continuous solution of \eqref{selfsimilarNODE3}, \eqref{selfsimilarNODE3bc}. By \cite{Barenblatt1}, PDE
\eqref{OP1} has a finite speed of propagation property, and minimal solution of \eqref{OP1}, \eqref{IF3} has a finite interface. Therefore, upper bound $\xi_*$ of the support of $f$ is finite;  $f$ is positive and smooth for $\xi<\xi_*$ and $f=0$ for $\xi\geq \xi_*.$ Now we prove that if $C>C_*$, then $f_1(0)=A_1>0$. We divide the proof into two cases:

\textbf{Case 1}: $p(m+\beta) < 1+ p$ \\
It is enough to show that $\exists$ $t_0 > 0$ such that $u(0,t_0) > 0$. Let $g(x,t) = C_1(t-x)^\frac{1+p}{mp-\beta}_+$, $C_1 \in (C_*,C)$. 
\begin{equation}\nonumber\\
Lg = b C_1^\beta (t-x)^\frac{\beta (1+p)}{mp - \beta}_+ \left [ 1 - \left( \frac{C_1}{C_*} \right)^{mp - \beta} + C_1^{1-\beta} \frac{1+p}{b(mp - \beta)} (t-x)^\frac{1+p-p(m+\beta)}{mp - \beta}_+\right ]
\end{equation}
Since $C_1<C$, we can choose $x_1<0$ and $\delta>0$ such that:
\[ Lg \leq 0, \, \text{in} \ Q:=\{(x,t): x_1\leq x <t, 0<t\leq \delta\}, \]
\[ g(x,0)\leq u(x,0),~x_1\leq x; \ g(x_1,t)\leq u(x_1,t),~0\leq t \leq \delta. \]
Comparison \eqref{CT} implies:
\[ 0 < g(x,t) \leq u(x,t), \forall~ x_1 \leq x < t,~ 0 \leq t \leq \delta. \]
In particular, we have: $u(0,t_0) > 0, \forall~ 0 < t_0 \leq \delta$, which implies that $f_1(0) = A_1 >0$.

\textbf{Case 2}: $p(m+\beta) > 1+ p$ \\
We apply \cref{lemma tw} with the forward traveling wave ($k > 0$). By \eqref{LIM1TW} for some $M > 0$ we have:
\begin{equation}\label{forwardwave}
\phi(y) < Cy^{\frac{1+p}{mp-\beta}}, \text{ for } y > M.
\end{equation}
Let us choose:
\begin{equation}\label{forwardwave1}
K = \max\{\phi(y): 0 \leq y \leq M\}, \ \xi = \max\bigg\{M;\Big(\frac{K}{C}\Big)^{\frac{mp-\beta}{1+p}}\bigg\},
\end{equation}
and consider a family of traveling-wave solutions to \eqref{OP1} of the form: $g(x,t) = \phi((kt-x-\xi))$.  From \eqref{forwardwave},\eqref{forwardwave1} it follows that:
\begin{equation}\label{forwardwave3}
\phi((-x-\xi)) \leq C(-x)^{\frac{1+p}{mp-\beta}}_+, \text{ for any } x \in \mathbb{R}.
\end{equation}
From the comparison theorem it follows that $g \leq u$ for any $x \in \mathbb{R}, ~t \geq 0$. By choosing $t_0 > 0$ such that $k > \xi t^{-1}_0 > 0$, we ensure that:
\begin{equation}\label{A1positive}
0 < g(0, t_0) = \phi((kt_0 - \xi)) \leq u(0, t_0) = t_0^{\frac{1}{1-\beta}}f_1(0),
\end{equation} 
which proves that $f_1(0) > 0$.
To prove the asymptotic formula \eqref{U10} we proceed as we did in the proof in \cref{lemma 2}. As before, \eqref{E11}-  \eqref{E13} follow from \eqref{IF2}, where $v_{\pm \epsilon}$ is a solution of the problem:
\begin{align}
v_t-\Big(|(v^{m})_x|^{p-1}(v^{m})_x\Big)_x+bv^{\beta}=0,~ |x|<|x_{\epsilon}|,~0<t\leq \delta, \\
v(x_{\epsilon},t)=(C\pm \epsilon)(-x_{\epsilon})^{\alpha},~ v(-x_{\epsilon},t)=u(-x_{\epsilon},t),  0\leq t\leq \delta, \\
v(x,0)=(C\pm \epsilon)(-x)_+^{\alpha},~ |x|\leq |x_{\epsilon}|. 
\end{align}
Rescaled function:
\[u_k^{\pm \epsilon}(x,t)=ku_{\pm \epsilon}\Big(k^{-\frac{1}{\alpha}}x,k^{\beta-1}t\Big),~k>0,\]
satisfies the Dirichlet problem:
\begin{subequations}
\begin{align}
v_t=\Big(|(v^{m})_x|^{p-1}(v^{m})_x\Big)_x-bv^{\beta}, ~\text{in}~ E^k_{\epsilon}=\Big\{|x|<k^{\frac{1}{\alpha}}|x_{\epsilon}|, ~0<t\leq k^{1-\beta}\delta\Big\}, \\
v(k^{\frac{1}{\alpha}}x_{\epsilon},t)=k(C\pm \epsilon)(-x_{\epsilon})^{\alpha},~v(-k^{\frac{1}{\alpha}}x_{\epsilon},t)=ku(-x_{\epsilon},k^{\beta-1}t),~0\leq t\leq k^{1-\beta}\delta, \\
v(x,0)=(C\pm \epsilon)(-x)_+^{\alpha},~|x|\leq k^{\frac{1}{\alpha}}|x_{\epsilon}|.
\end{align}
\end{subequations}
As before in the proof of \cref{lemma 2} we have:
\begin{equation}\label{NLIM3}
\underset{k\rightarrow+\infty}\lim u_{k}^{\pm \epsilon}(x,t)=v_{\pm \epsilon}(x,t),~(x,t)\in P := \big\{(x,t):x\in \mathbb{R},~ 0<t\leq t_0\big\},
\end{equation}
thus,
\begin{equation}\label{NU2}
v_{\pm \epsilon}(x,t) = t^\frac{1}{1-\beta}f_1(\rho; C \pm \epsilon), \forall \rho < \zeta_*, t \geq 0
\end{equation}
Taking $x = \eta_\rho (t) = \rho t^\frac{mp-\beta}{(1+p)(1-\beta)}$ and $\tau =k^{\beta-1}t$ it follows from \eqref{NLIM3} that:
\begin{equation}\label{NU17}
u_{\pm \epsilon}(\eta_{\rho}(\tau),\tau)\sim \tau^\frac{1}{1-\beta}f_1(\rho; C\pm \epsilon),~\text{as}~\tau\rightarrow 0^{+}.
\end{equation}
From \eqref{E13} and \eqref{NU17}, since $\epsilon>0$ is arbitrary and $f_1(0) = A_1 > 0$, the desired asymptotic formula \eqref{U10} follows. The lemma is proved. 
\end{proof}


\begin{proof}[Proof of \cref{lemma 4}] 
As before, \eqref{E11}-\eqref{E13} follow from \eqref{IF2}, where $v_{\pm \epsilon}$ is a solution of the problem:
\begin{align}
v_t-\Big(|(v^{m})_x|^{p-1}(v^{m})_x\Big)_x+bv^{\beta}=0,~ |x|<|x_{\epsilon}|,~0<t\leq \delta, \\
v(x_{\epsilon},t)=(C\pm \epsilon)(-x_{\epsilon})^{\alpha},~ v(-x_{\epsilon},t)=u(-x_{\epsilon},t),  0\leq t\leq \delta, \\
v(x,0)=(C\pm \epsilon)(-x)_+^{\alpha},~ |x|\leq |x_{\epsilon}|. 
\end{align}
Rescaled function:
\[u_k^{\pm \epsilon}(x,t)=ku_{\pm \epsilon}\Big(k^{-\frac{1}{\alpha}}x,k^{\beta-1}t\Big),~k>0,\]
satisfies the Dirichlet problem:
\begin{subequations}
\begin{align}
v_t=k^\frac{1+p-\alpha(mp-\beta)}{\alpha}\Big(|(v^{m})_x|^{p-1}(v^{m})_x\Big)_x-bu^{\beta}, ~\text{in}~ E^k_{\epsilon}, \\
v(k^{\frac{1}{\alpha}}x_{\epsilon},t)=k(C\pm \epsilon)(-x_{\epsilon})^{\alpha},~v(-k^{\frac{1}{\alpha}}x_{\epsilon},t)=ku(-x_{\epsilon},k^{\beta-1}t),~0\leq t\leq k^{1-\beta}\delta, \\
v(x,0)=(C\pm \epsilon)(-x)_+^{\alpha},~|x|\leq k^{\frac{1}{\alpha}}|x_{\epsilon}|, 
\end{align}
\end{subequations}
where \[E^k_{\epsilon}:=\Big\{|x|<k^{\frac{1}{\alpha}}|x_{\epsilon}|, ~0<t\leq k^{1-\beta}\delta\Big\}.\]
The next step consists in proving the convergence of the sequence $\{u_k^{\pm \epsilon}\}$ as $k\rightarrow +\infty$. 
This step is identical with the proof given in the similar Lemma 3.4 from \cite{Abdulla1}. For any fixed $t_0>0$, the function $g(x,t)=(C+1)(1+x^2)^{\alpha/2}e^{t}$ is a uniform upper bound for the sequence $\{u_k^{\pm \epsilon}\}$ in
$E_{0\epsilon}^{k}=E_{\epsilon}^k\cap P$, where$P= \{(x,t): 0<t\leq t_0\}.$ The sequences $\{u_k^{\pm \epsilon}\}$ are uniformly H\"older continuous on an arbitrary compact subset of $P$ \cite{Est-Vazquez,Ivanov1}. As in the proof of the Lemma 3.4 of \cite{Abdulla1} it is proved that some subsequences $\{u_{k'}^{\pm \epsilon}\}$ converge to solutions of the reaction equation. This imply that
\begin{equation}\label{U17}
u_{\pm \epsilon}(\eta_{\ell}(\tau),\tau)\sim \tau^{\frac{1}{1-\beta}}\Big[(C\pm \epsilon)^{1-\beta}\ell^{\alpha(1-\beta)}-b(1-\beta)\Big]^{\frac{1}{1-\beta}},~\text{as}~\tau\rightarrow 0^{+}.
\end{equation}
From \eqref{E13} and \eqref{U17}, since $\epsilon>0$ is arbitrary, the desired formula \eqref{U5} follows. The lemma is proved.
\end{proof}
	\section{Proofs of the Main Results}\label{sec: proofs of the main results.} \begin{proof}[Proof of \cref{theorem 1}] 
From \cref{lemma 2} and \eqref{U1} it follows
\begin{equation}\label{LIM4}
\underset{t\rightarrow 0^+}\lim \text{inf}~\eta(t)t^{1/(\alpha(mp-1)-(1+p))}\geq \xi_*.
\end{equation}
For $\forall~\epsilon>0$, let $u_{\epsilon}$ be a minimal solution of the CP \eqref{OP1}, \eqref{IF3} with $b=0$ and with $C$ replaced by $C+\epsilon$. The second inequality of \eqref{E11} and the first inequality of \eqref{E12} follow from \eqref{IF2}. Since $u_{\epsilon}$ is a supersolution of \eqref{OP1}, from \eqref{E11}, \eqref{E12}, and a comparison principle, the second inequality of \eqref{E13} follows. By \cref{lemma 1} we have:
\[\eta(t)\leq(C+\epsilon)^\frac{mp-1}{1+p-\alpha(mp-1)}\xi'_*t^{1/(1+p-\alpha(mp-1))},~0\leq t\leq\delta,\]
and hence:
\begin{equation}\label{LIM5}
\underset{t\rightarrow0^+}\lim\text{sup}~\eta(t)t^{1/(\alpha(mp-1)-(1+p))}\leq\xi_*.
\end{equation}
From \eqref{LIM4} and \eqref{LIM5}, \eqref{I1} follows. 
\end{proof}

\begin{proof}[Proof of \cref{theorem 2}]
Assume that $u_0$ is defined by \eqref{IF3} and $p(m+\beta)\neq 1+p.$ The self-similar form \eqref{U3} follows from \cref{lemma 3}. Let $C>C_*$.
For a function:
\begin{equation}\label{G1}
g(x,t) =t^{1/(1-\beta)}f_1(\zeta),~\zeta=xt^{-\frac{mp-\beta}{(1+p)(1-\beta)}}.
\end{equation}
we have
\begin{equation}\label{g1}Lg=t^{\frac{\beta}{1-\beta}}\mathcal{L}^0f_1,
\end{equation}
where the operator $\mathcal{L}^0$ is defined by \eqref{selfsimilarNODE3}.
By choosing
\[f_1(\zeta)=C_0(\zeta_0-\zeta)_+^{\gamma_0},~~0<\zeta<+\infty,\]
with $C_0,~\zeta_0>0$ and $\gamma_0=(1+p)/(mp-\beta)$ we have
\begin{equation}\label{OP8}
\mathcal{L}^0f_1=bC_0^{\beta}(\zeta_0-\zeta)_+^{\frac{\beta(1+p)}{mp-\beta}}\Bigg\{1-(C_0/C_*)^{mp-\beta}+ \frac{C_0^{1-\beta}}{b(1-\beta)}\zeta_0(\zeta_0-\zeta)_+^{\frac{1+p-p(m+\beta)}{mp-\beta}}\Bigg\}.
\end{equation}
For an upper estimation we choose $C_0=C_2 ~\text{and}~ \zeta_0=\zeta_2$ (see the appendix, \cref{sec: appendix}). If $p(m+\beta)>1+p$, we have
\[\mathcal{L}^0f_1\geq bC_2^{\beta}(\zeta_2-\zeta)_+^{\frac{\beta(1+p)}{mp-\beta}}\Bigg\{1-(C_2/C_*)^{mp-\beta}+ \frac{C_2^{1-\beta}}{b(1-\beta)}\zeta_2^{\frac{(1+p)(1-\beta)}{mp-\beta}}\Bigg\}=0,~ \text{for} ~0\leq \zeta \leq \zeta_2,\]
while if $p(m+\beta)<1+p$, we have:
\[\mathcal{L}^0f_1\geq bC_2^{\beta}(\zeta_2-\zeta)_+^{\frac{\beta(1+p)}{mp-\beta}}\Big\{1-(C_2/C_*)^{mp-\beta}\Big\}=0,~ \text{for} ~0\leq \zeta \leq \zeta_2.\]
By \eqref{g1} we have
\begin{subequations}\label{E18}
\begin{align}
Lg\geq 0,~ \text{for} ~0<x<\zeta_2 t^{\frac{mp-\beta}{(1+p)(1-\beta)}},~0<t<+\infty, \label{h1}\\
Lg= 0,~ \text{for} ~x>\zeta_2 t^{\frac{mp-\beta}{(1+p)(1-\beta)}},~0<t<+\infty. \label{h2}
\end{align}
\end{subequations}
\eqref{CT} implies that $g$ is a supersolution of \eqref{OP1} in $\{(x,t): x>0, ~t>0\}$. Since
\begin{subequations}\label{IF7}
\begin{align}
g(x,0)=u(x,0)=0, ~ \text{for} ~0\leq x<+\infty, \label{i1}\\
g(0,t)=u(0,t), ~\text{for} ~0\leq x<+\infty, \label{i2}
\end{align}
\end{subequations}
the right-hand side of \eqref{E1} follows.
If $p(m+\beta)<1+p$, to prove the lower estimation we choose $C_0=C_1,~ \zeta_0=\zeta_1, \text{and}~\gamma_0=(1+p)/(mp-\beta).$ From \eqref{OP8} and \eqref{g1} we have
\[\mathcal{L}^0f_1\leq bC_1^{\beta}(\zeta_1-\zeta)^{\frac{\beta(1+p)}{mp-\beta}}\Bigg\{1-(C_1/C_*)^{mp-\beta}+\frac{C_1^{1-\beta}}{b(1-\beta)}\zeta_1^{\frac{(1+p)(1-\beta)}{mp-\beta}}\Bigg\}=0,~\text{for}~0\leq\zeta \leq \zeta_1,\]
\begin{subequations}\label{E19}
\begin{align}
Lg\leq 0,~ \text{for} ~0<x<\zeta_1 t^{\frac{mp-\beta}{(1+p)(1-\beta)}},~0<t<+\infty,\label{j1}\\
Lg= 0,~ \text{for} ~x>\zeta_1 t^{\frac{mp-\beta}{(1+p)(1-\beta)}},~0<t<+\infty. \label{j2}
\end{align}
\end{subequations}
As before from \eqref{IF7} and \eqref{CT}, the left-hand side of \eqref{E1} follows.
If $p(m+\beta)>1+p$, then to prove the lower estimation we choose $C_0=C_1, \zeta_0=\zeta_1$ and $\gamma_0=p/(mp-1).$ We have
\begin{gather} 
\mathcal{L}^0f_1\leq C_1(1-\beta)^{-1}(\zeta_1-\zeta)^{\frac{1+p-mp}{mp-1}}\times\nonumber\\
\times\Bigg\{\zeta_1-C_1^{mp-1}\frac{(1-\beta)p(mp)^{p}}{(mp-1)^{1+p}}+b(1-\beta)C_1^{\beta -1}\zeta_1^{ \frac{p (m+\beta)-(1+p)}{mp-1}}\Bigg\}=0,~0<\zeta<\zeta_1, \nonumber
\end{gather} 
which again implies \eqref{E19}. From \eqref{CT}, the left-hand side of \eqref{E1} follows.

Let $p(m+\beta)>1+p$ and $0<C<C_*.$ For $\gamma\in [0,1)$ consider a function
\[g(x,t)=\bigg[C^{1-\beta}(-x)_+^{\frac{(1+p)(1-\beta)}{mp-\beta}}-b(1-\beta)(1-\gamma)t\bigg]_+^{\frac{1}{1-\beta}},~x\in\mathbb{R},~t>0.\]
We estimate $Lg$ in
\[M:=\{(x,t):-\infty<x<\mu_{\gamma}(t),~t>0\}, \
\mu_{\gamma}(t)=-\bigg[b(1-\beta)(1-\gamma)C^{\beta-1}t\bigg]^{\frac{mp-\beta}{(1+p)(1-\beta)}}.\]
We have $Lg=bg^{\beta}S$, where
\begin{subequations}\label{S1}
\begin{align}
S= \gamma-C^{mp-\beta}\Bigg[1-\Bigg(\frac{-\mu_{\gamma}(t)}{(-x)_+}\Bigg)^{\frac{(1+p)(1-\beta)}{mp-\beta}}\Bigg]^\frac{p(m+\beta-1)-\beta}{1-\beta} \times \nonumber\\
\times \Bigg[R_1 + R_2 \Bigg[1-\Bigg(\frac{-\mu_{\gamma}(t)}{(-x)_+}\Bigg)^{\frac{(1+p)(1-\beta)}{mp-\beta}}\Bigg]^{-1}\Bigg], \label{k1} \\
S|_{t=0} = \gamma-(C/C_*)^{mp-\beta}, ~ S|_{x=\mu_{\gamma}(t)}=\gamma,
\end{align}
\end{subequations}
where $R_1, R_2 >0$ (see \cref{sec: appendix}). Moreover,
\[S_t \geq 0 ~\text{in} ~M.\]
Thus,
\[\gamma-(C/C_*)^{mp-\beta}\leq S\leq\gamma ~ \text{in} ~M.\]
If we take $\gamma=(C/C_*)^{mp-\beta}$ (respectively, $\gamma=0$), then we have:
\begin{subequations}\label{E20}
\begin{align}
Lg\geq 0 ~(\text{respectively}, Lg\leq 0)~\text{in} ~M, \label{l1}\\
Lg= 0, ~\text{for}~x>\mu_{\gamma}(t),~t>0. \label{l2}
\end{align}
\end{subequations}
From \eqref{CT}, the estimation \eqref{E3} follows.
Let $p(m+\beta)<1+p$ and $0<C<C_*.$ First, we establish the following rough estimation:
\begin{align}\label{E21}
\bigg[C^{1-\beta}(-x)_+^{\frac{(1+p)(1-\beta)}{mp-\beta}}-b(1-\beta)\big(1-(C/C_*)^{mp-\beta}\big)t\bigg]_+^{\frac{1}{1-\beta}}\leq  \nonumber\\
\leq u(x,t) \leq C(-x)_+^{\frac{1+p}{mp-\beta}},~ \text{for}~x\in \mathbb{R},~0\leq t<+\infty.
\end{align}
To prove the left-hand side we consider the function, $g$, as in the case when $p(m+\beta)>1+p$~with $\gamma = \big(C/C_*\big)^{mp-\beta}.$ As before, we then derive \eqref{k1}, and since: 
\[S_t\leq 0, \, \text{in} \, M,\]
we have $S\leq 0 ~\text{in}~ M.$ Hence, \eqref{E20} is valid with $\leq$ in \eqref{l1}. As before, from \eqref{CT}, the left-hand side of \eqref{E21} follows. To prove the right-hand side of \eqref{E21} it is enough to observe that:
\[Lu_0=bu_0^{\beta}\big(1-(C/C_*)^{mp-\beta}\big)\geq 0,~\text{for}~x\in \mathbb{R},~t\geq0.\]
Having \eqref{E21}, we can now establish a more accurate estimation \eqref{E4}. Consider a function:
\[g(x,t)=C_0\Big(-\zeta_0 t^{\frac{mp-\beta}{(1+p)(1-\beta)}}-x\Big)^{\frac{1+p}{mp-\beta}}_+,~\text{in}~ G_{\ell},\]
\[G_{\ell}:=\Big\{(x,t): \zeta(t)=-\ell t^{\frac{mp-\beta}{(1+p)(1-\beta)}}<x<+\infty,~0<t<+\infty\Big\},\]
where, $C_0,~\zeta_0>0, \ell>\zeta_0$. Calculating $Lg$ in
\[G^+_\ell:=\Big\{(x,t): \zeta(t)<x<-\zeta_0 t^{\frac{mp-\beta}{(1+p)(1-\beta)}},~0<t<+\infty\Big\},\]
we have:
\begin {align}\label{S3}
Lg=bg^{\beta}S,\quad S=1-(C_0/C_*)^{mp-\beta}-(b(1-\beta))^{-1}C_0^{1-\beta}\zeta_0 t^{\frac{p(m+\beta)-(1+p)}{(1+p)(1-\beta)}}\times\ \nonumber\\
\times \Big(-\zeta_0 t^{\frac{mp-\beta}{(1+p)(1-\beta)}}-x\Big)^{\frac{1+p-p(m+\beta)}{mp-\beta}}.
\end{align}
By choosing $C_0=C_*,$ we have:
\begin{equation}\label{E22}
Lg\leq 0, \text{ in}~ G_{\ell}^+;~Lg=0,~\text{in}~G_{\ell}\backslash \bar G_{\ell}^+.
\end{equation}
To obtain a lower estimation we choose $\zeta_0=\zeta_3, \text{and}~ \ell=\ell_0$ (see \cref{sec: appendix}). Using \eqref{E21}, we have:
\begin{subequations}\label{E23}
\begin{align}
g|_{x=\zeta(t)}=t^{\frac{1}{1-\beta}}C_*(\ell_0-\zeta_3)^{\frac{1+p}{mp-\beta}}=\Big(b(1-\beta)\theta_*t\Big)^{\frac{1}{1-\beta}}
=t^{\frac{1}{1-\beta}}\times\nonumber\\\times\Bigg[C^{1-\beta}\ell_0^{\frac{(1+p)(1-\beta)}{mp-\beta}}-b(1-\beta)\Big(1-\big(C/C_*\big)^{mp-\beta}\Big)\Bigg]^{\frac{1}{1-\beta}}\leq u(\zeta(t),t),~t\geq0,\label{m1}\\
g(x,0)=u(x,0)=0, ~0\leq x\leq x_0, \label{m2}\\
g(x_0,t)=u(x_0,t)=0, ~t\geq 0, \label{m3}
\end{align}
\end{subequations}
where $x_0>0$ is an arbitrary fixed number. By using \eqref{E22}, \eqref{E23}, we can apply \eqref{CT} in:
\[G'_{\ell_0}:=G_{\ell_0}\cap \Big\{x<x_0\Big\}.\]
Since $x_0>0$ is arbitrary, the left inequality in \eqref{E4} follows. Since
\[S_x\geq 0, ~\text{for}~ \zeta(t)<x<-\zeta_0 t^{\frac{mp-\beta}{1+p)(1-\beta)}},~t>0,\]
from \eqref{S3} it follows that
\[S\geq S|_{x=\zeta((t)}=1-(C_0/C_*)^{mp-\beta}-\big(b(1-\beta)\big)^{-1}C_0^{1-\beta}\zeta_0 (\ell-\zeta_0 )^{\frac{1+p-p(m+\beta)}{mp-\beta}}.\]
By choosing $ C_0=C_3,~\zeta_0=\zeta_4,~\ell=\ell_1$ (see \cref{sec: appendix}), we have
\[S|_{x=\zeta(t)}=0,\]
\[Lg\geq 0~\text{in}~G^+_{\ell_1},~Lg=0~\text{in}~G_{\ell_1}\backslash \bar G^+_{\ell_1},\]
\[u(\zeta(t),t)\leq t^{\frac{1}{1-\beta}} C\ell_1^{\frac{1+p}{mp-\beta}}=t^{\frac{1}{1-\beta}}C_3(\ell_1-\zeta_4)^{\frac{1+p}{mp-\beta}}=g(\zeta(t),t),~t\geq 0,\]
and, for arbitrary $x_0>0,$ \eqref{m2} and \eqref{m3} are valid. By applying \eqref{CT} in $G'_{\ell_1},$  due to the arbitrariness of $x_0>0$, we derive the right-hand side of \eqref{E4}.
From \eqref{E1}, \eqref{E3}, and \eqref{E4} it follows that:
\[\zeta_1t^{\frac{mp-\beta}{(1+p)(1-\beta)}}\leq \eta(t)\leq \zeta_2t^{\frac{mp-\beta}{(1+p)(1-\beta)}},~0\leq t<+\infty,\]
where the constants $\zeta_1$ and $\zeta_2$ are chosen according to relevant estimations for $u$. From \eqref{I2} and the respective estimations \eqref{E1}, \eqref{E3}, and \eqref{E4}, the estimation \eqref{E2} follows. If $u_0$ satisfies \eqref{IF2} with $\alpha=(1+p)/(mp-\beta)$ and with $C\neq C_*$, then the asymptotic formulae \eqref{I3} and \eqref{U4} may be proved as the similar estimations \eqref{I1} and \eqref{U1} were in \cref{lemma 1}. 
\end{proof}
\begin{proof}[Proof of \cref{theorem 3}] 
For $\forall~\epsilon>0$ from \eqref{IF2}, \eqref{E11} follows. Consider a function:
\begin{equation}\label{G2}
g_{\epsilon}(x,t)=\Big[(C+\epsilon)^{1-\beta}(-x)_+^{\alpha(1-\beta)}-b(1-\beta)(1-\epsilon)t\Big]_+^{1/(1-\beta)}.
\end{equation}
We estimate $Lg$ in:
\[M_1:=\big\{(x,t):x_{\epsilon}<x<\eta_{\ell}(t), \, 0<t \leq \delta_1\big\},\]
\[\eta_{\ell}(t)=-\ell t^{1/(\alpha(1-\beta)}, ~\ell(\epsilon)=(C+\epsilon)^{-1/\alpha}\big[b(1-\beta)(1-\epsilon)\big]^{1/\alpha(1-\beta)},\]
where  $\delta_1>0$ is chosen such that $\eta_{\ell(\epsilon)}(\delta_1)=x_{\epsilon}.$ We have
\[Lg_{\epsilon}=bg_{\epsilon}^{\beta}\{\epsilon+S\}\]
\[S=-b^{-1}(\alpha m)^{p}(C+\epsilon)^{mp-\beta}(-x)_+^{\alpha(mp-\beta)-(1+p)}\Big\{g|x|^{-\alpha}\Big/(C+\epsilon)\Big\}^{p( m+\beta)-(1+p)}S_1,\]
\[S_1=\Bigg\{\alpha p(m+\beta-1) +p(\alpha(1-\beta)-1)\Big[g|x|^{-\alpha}\Big/ (C+\epsilon)\Big]^{1-\beta}\Bigg\}.\]
If $ p(m+\beta) \geq 1+p$, we can choose $x_{\epsilon}<0$ such that
\[|S|<\frac{\epsilon}{2},~\text{in}~M_{1}.\]
Thus we have:
\begin{align}
Lg_{\epsilon}>b(\epsilon/2)g^{\beta}_{\epsilon}~ (\text{respectively}, ~ Lg_{-\epsilon}<-b\big(\epsilon/2)g^{\beta}_{-\epsilon}\big),~\text{in}~M_1, \\
Lg_{\pm \epsilon}=0, ~\text{for}~x>\eta_{\ell(\pm \epsilon)}(t),~0<t\leq \delta_1, \\
g_{\epsilon}(x,0)~\geq~ u_0(x)~\big(\text{respectively}, ~g_{-\epsilon}(x,0)\leq u_0(x)\big),~x\geq x_{\epsilon}. \
\end{align}
Since $u$ and $g$ are continuous functions, $\delta=\delta(\epsilon)\in (0,\delta_1]$, may be chosen such that:
\[g_{\epsilon}(x_{\epsilon},t)\geq u(x_{\epsilon},t)~\big(\text{respectively},~ g_{-\epsilon}(x_{\epsilon},t)\leq u(x_{\epsilon},t)\big),~0\leq t\leq \delta.~\]
From \eqref{CT} it follows that:
\begin{subequations}\label{E24}
\begin{align}
g_{-\epsilon}\leq u\leq g_{\epsilon}, \, x\geq x_{\epsilon}, \, 0\leq t\leq \delta, \label{n1}\\
\eta_{\ell(-\epsilon)}(t)\leq \eta(t)\leq \eta_{\ell(\epsilon)},~ 0\leq t\leq \delta, \label{n2}
\end{align}
\end{subequations}
which imply \eqref{I4} and \eqref{U5}.
Let $p(m+\beta)<1+p$. In this case the left-hand side of \eqref{E24} may be proved similarly. Moreover, we can replace $1+\epsilon$ with $1$ in $g_{-\epsilon}$ and $\eta_{\ell}(-\epsilon).$
For a sharp upper estimation, consider a function:
\[g(x,t)=C_6\Big(-\zeta_5t^{\frac{1}{\alpha(1-\beta)}}-x\Big)_+^{\alpha},~\text{in}~G_{\ell,\delta},\]
\[G_{\ell,\delta}:=\{(x,t):\eta_{\ell}(t)<x<+\infty,~0<t<\delta\},\]
where~$\ell \in (\ell_*,+\infty)$,
$C_6$ and $\zeta_5$ are defined in \cref{sec: appendix}.
From \eqref{U5} it follows that for all $\ell>\ell_*$ and for all sufficiently small $\epsilon>0$, there exists a $\delta=\delta(\epsilon,\ell)>0$ such that:
\begin{equation}\label{E26}
u(\eta_{\ell}(t),t)\leq t^{\frac{1}{1-\beta}}[C^{1-\beta}\ell^{\alpha(1-\beta)}-b(1-\beta)(1-\epsilon)]^{\frac{1}{1-\beta}},~0\leq t\leq\delta.
\end{equation}
Calculating  $Lg$ in
\[G^+_{\ell,\delta}=\Big\{(x,t):\eta_{\ell}(t)<x<-\zeta_5t^{\frac{1}{\alpha(1-\beta)}},~0<t<\delta\Big\},\]
we derive
\[Lg=bg^{\beta}S,~S=1-(b(1-\beta))^{-1}\zeta_5 C_6^{1/\alpha}\Big\{gt^{1/(\beta-1)}\Big\}^{(\alpha(1-\beta)-1)/\alpha}\]
\[-\frac{p}{b}(\alpha m)^{p}(\alpha m -1)C_6^{(1+p)/\alpha}g^{mp-\beta-((1+p)/\alpha)}.\]
Since
\[S_x\geq 0,~\text{in}~G^+_{l,\delta},\]
\[S \geq S\vert_{x=\eta_{\ell}(t)} =1-(b(1-\beta))^{-1}\zeta_5 C_6^{1-\beta}(\ell-\zeta_5)^{\alpha(1-\beta)-1}-\]
\[-t^\frac{\alpha(mp-\beta)-(1+p)}{\alpha(1-\beta)}b^{-1}p(\alpha m)^{p}(\alpha m-1)C_6^{mp-\beta}(\ell-\zeta_5)^{\alpha(mp-\beta)-(1+p)},\]
we have
\[ S\geq\epsilon-t^\frac{\alpha(mp-\beta)-(1+p)}{\alpha(1-\beta)}b^{-1}p(\alpha m)^{p}(\alpha m-1)C_6^{mp-\beta}(\ell-\zeta_5)^{\alpha(mp-\beta)-(1+p)}, \ \quad{in} \ G^+_{\ell,\delta}.\]
By choosing $\delta = \delta(\epsilon)>0$ sufficiently small we have
\begin{equation}\label{o1}
Lg \geq b(\epsilon/2)g^{\beta}, \text{ in } G^+_{\ell, \delta}.
\end{equation}
By applying \eqref{E26} and \eqref{CT} in $G'_{\ell,\delta}=G_{\ell,\delta}\cap \{x<x_0\}$ we have
\begin{subequations}\label{E27}
\begin{align}
Lg=0, ~\text{in}~G'_{\ell,\delta} \backslash \bar G^+_{\ell,\delta}, \label{o2}\\
u(\eta_{\ell}(t),t)\leq t^{\frac{1}{1-\beta}}\big[C^{1-\beta}\ell^{\alpha(1-\beta)}-b(1-\beta)(1-\epsilon)\big]^{\frac{1}{1-\beta}} = \nonumber \\
= C_6(\ell-\zeta_5)^{\alpha}t^{\frac{1}{\alpha(1-\beta)}}=g(\eta_{\ell}(t),t),~0\leq t\leq \delta, \label{o3}\\
u(x_0,t)=g(x_0,t)=0,~0\leq t\leq \delta, \label{o4}\\
u(x,0)~=g(x,0)~=0,~0\leq x\leq x_0. \label{o5}
\end{align}
\end{subequations}
Since $x_0>0$ is arbitrary, from \eqref{E27} and \eqref{CT}, it follows that for all $\ell>\ell_*$ and $\epsilon>0$, there exists $\delta=\delta(\epsilon,\ell)>0$ such that:
\begin{equation}\label{E29}
u(x,t)\leq C_6\Big(-\zeta_5 t^{\frac{1}{\alpha(1-\beta)}}-x\Big)_+^{\alpha},~\text{in}~\bar G_{\ell,\delta.}
\end{equation}
In view of \eqref{U5} (which is valid along $x=\eta_{\ell}(t)$),  $\delta$ may be chosen so small that:
\begin{equation}\label{E30}
-\ell t^{1/\alpha(1-\beta)}\leq \eta(t)\leq -\zeta_5t^{1/\alpha(1-\beta)},~0\leq t\leq\delta.
\end{equation}
Since $\ell>\ell_*$ and $\epsilon>0$ are arbitrary numbers, \eqref{I4} follows from \eqref{E30}.
\end{proof}
Proof of \cref{theorem 4}, and all the results described in \cref{sec: details of the main results} case (4a)-(4d), and in the special case of $b=0$ are almost identical to the similar proofs given in \cite{Abdulla1}.
	
	\section{Numerical Solution}\label{sec: numerical solution}

In this section, we investigate the numerical solutions to \eqref{OP1} using on a weighted essentially nonoscillatory (WENO) scheme. In \cref{sec:weno}, we briefly introduce the WENO discretization for the PDE. Numerical results and comparisons with analysis are presented in \cref{sec:ZKB} and \cref{sec:compare}. All figures can be found in the appendix, \cref{sec:fig}.   

\subsection{Finite Difference Discretization} 
\label{sec:weno}

WENO methods refer to a family of finite volume or finite difference methods for solutions of hyperbolic conservation laws and other convection dominated problems. The central idea behind the WENO scheme is to use nonlinear combinations of numerical stencils for solution interpolation/reconstruction, with weights adapted to the smoothness of the solution on these stencils. Therefore, interpolation across discontinuous or nonsmooth part of the solution is avoided as much as possible. This yields numerical solutions with high order accuracy in smooth regions, while maintaining non-oscillatory and sharp discontinuity transitions \cite{WENO_REVIEW}. These features make WENO schemes well suitable for the study of problems with piecewise smooth solutions containing discontinuities or sharp interfaces. A WENO scheme was proposed in \cite{WENO_LIU} to solve nonlinear degenerate parabolic equation of the form $u_t = (b(u))_{xx}$. In that paper, the second order derivative term is directly approximated using a conservative flux difference formula. Below we describe a finite difference WENO scheme for the solution of the nonlinear double degenerate parabolic equation \eqref{OP1}.  

As shown in \cref{fig:weno}, the numerical solution is defined at full grid node $u_{i} = u(x_i)$, where $x_i = i\Delta x$ and $\Delta x = x_{i+1}-x_i$ is the uniform grid size. Defining the flux function $f(u^m _x) = |(u^{m})_x|^{p-1}(u^{m})_x$, and introduce an auxillary function $h(\xi)$ such that:
\begin{equation}
f(u^m _x)(x) = \frac{1}{\Delta x}\int _{x-\frac{\Delta x}{2}} ^{x+\frac{\Delta x}{2}}h(\xi)d\xi .
\label{eq:avg}
\end{equation}      
Then at grid node $x_i$, we have: 
\begin{equation}
\Big(|(u^{m})_x|^{p-1}(u^{m})_x\Big)_x(x_i) = f(u^m _x)_x(x_i) = \frac{1}{\Delta x}(h(x_{i+\frac{1}{2}})-h(x_{i-\frac{1}{2}})).
\label{eq:hx}
\end{equation}
Notice that to evaluate the derivative $f(u^m _x)_x$ at $x_i$, we need the values of $h(x)$ at half grid nodes $x_{i-\frac{1}{2}} = x_i-\Delta x$ and $x_{i+\frac{1}{2}} = x_i+\Delta x$. Therefore, if the function $h(x)$ can be computed to $r$th order of accuracy, then the right hand side of equation \eqref{eq:hx} would be an $r$th order approximation to $f(u^m _x)$. Overall, the WENO approximation for $\Big(|(u^{m})_x|^{p-1}(u^{m})_x\Big)_x$ can be summarized as following:
\begin{enumerate}
\item With the given values of $u(x)$ (and thus the values of $u^m(x)$) defined at grid $x_i=i\Delta x$, approximate the derivative $u^m _x$ at $x_i$ using a fifth order WENO interpolation scheme. Based on this, compute the pointwise values of the flux function $f(u^m _x)(x_i)$. From \eqref{eq:avg}, this value is also the cell average of $h(x)$ over the interval $(x_{i-\frac{1}{2}},x_{i+\frac{1}{2}})$. 
\item From the cell average of $h(x)$, compute the values of $h(x)$ at the half grid nodes $x_{i+\frac{1}{2}}$ with a fifth order WENO reconstruction scheme.    
\end{enumerate}
Here the WENO interpolation scheme for $u^m _x$ is similar to that proposed in \cite{WENO_JIANG} for the solution of Hamilton-Jacobi Equations. In \cite{WENO_JIANG}, derivatives of the solution are computed on left-biased and right-biased numerical stencils to construct monotone Hamiltonians. For the solution of the nonlinear parabolic equation in this paper, we simply calculate the left and right biased derivatives by WENO approximation and take the arithmetic average of the two to be the value of $u^m _x$. A similar strategy is used for the construction of the value $h(x)$ at half grid nodes, where a fifth order WENO reconstruction scheme \cite{WENO_REVIEW} is applied.

To compute the numerical solution at a new time level $t_{k+1}=t_k+\Delta t$ from its value at time level $t_k$, we apply the third-order TVD Runge-Kutta time discretization \cite{WENO_REVIEW}:
\begin{equation}
u^{(1)} = u^{k}+\Delta t L(u^{k}), \\ \\
u^{(2)} = \frac{3}{4}u^{k}+\frac{1}{4}u^{(1)}+\frac{1}{4}\Delta t L(u^{(1)}), \\ \\
u^{(k+1)} = \frac{1}{3}u^{k}+\frac{2}{3}u^{(2)}+\frac{2}{3}\Delta t L(u^{(2)}).
\end{equation}
Here, $\Delta t$ is the time step, $u^{k}$ and $u^{k+1}$ are numerical solutions at time level $t_k$ and $t_{k+1}$, respectively. And $L(u)$ is the finite difference approximation to the right hand side of equation.

\subsection{Comparison with the Instantaneous Point Source (IPS) Solution}
\label{sec:ZKB}
Solution of the CP for \eqref{OP1}, $b=0$ with initial function being a Dirac's point mass (or $\delta$-function) is given by (\cite{zeldovich,Barenblatt1}):
\begin{equation}\label{ZKB:u}
u_*(x,t) = t^\frac{-1}{p(m+1)} \left[\Gamma - k(m,p) \left(|x|t^{\frac{-1}{p(m+1)}}\right)^{\frac{1+p}{p}}\right]_+^{\frac{p}{mp -1}},
\end{equation}
with $k(m,p) =  \frac{mp-1}{m(1+p)}\left(\frac{1}{p(m+1)}\right)^\frac{1}{p}$. Here $\Gamma$ is an integration constant, defined by the conservation of the energy. This solution has a compact support $[-\eta(t),\eta(t)]$ with the interface function given by:
\begin{equation}\label{ZKB:eta}
\eta(t) = t^{\frac{1}{p(m+1)}}\left(\frac{\Gamma}{k(m,p)}\right)^{\frac{p}{p+1}}.
\end{equation}
For our numerical test, we use parameters $m=6$, and $p=2$ and $3$ respectively. The computational domain is $[-5,5]$ with total number of $1024$ grid points. The initial condition is taken as the IPS solution $u_*$ at $t = 0.05$ with $\Gamma=1.0$. We set the domain large enough so that the interface does not reach the boundary at the end of numerical simulation. The periodic boundary condition is imposed to simplify the numerical implementation. Without the focus on efficiency of the algorithm, we always choose the time step $\Delta t$ small enough to get a stable solution.     

The comparison between the numerical and analytical solutions is shown in \cref{fig:zkbu} for time $t=2.0$. Here the filled circles represent the numerical solution and the solid line is the solution \eqref{ZKB:u}. The agreement is excellent. The WENO scheme can successfully capture the sharp transition in the solution without generating any apparent numerical oscillations. To identify the location of the (right) interface $\eta(t)$, we take the first $x_i$ where $u_i < 10^{-10}$ as the interface location. In \cref{fig:zkbeta}, the computed values for $\eta(t)$ (circles) are plotted together with that given by the IPS solution \eqref{ZKB:eta} (solid curve) at different stages of the simulation. It is clear that the dynamics of the interface is accurately captured by the WENO scheme. 
%
%

\subsection{Comparison with Analytical Results}
\label{sec:compare}

In this section, we apply the WENO scheme to equation \eqref{OP1} with initial condition given by \eqref{IF3}. To compare the numerical solution with the analytical results for the CP, we use numerical initial condition as shown in \cref{fig:IC}. Here $u(x,0)$ is given by condition \eqref{IF3} near the interface (interval $[-1,0]$ for this case). As the value of $x$ gets smaller, $u(x,0)$ is smoothly brought to zero by a hyperbolic tangent function. Notice that since the solution to \eqref{OP1} has a finite speed of propagation, it is expected that as long as the time is short enough, the numerical solution locally close to the interface should agree with that from the analysis. For all the numerical examples shown below, a grid size of $\Delta x = \frac{1}{128}$ is used. Since the interfaces never reach beyond the domain boundary at the end of the simulation, periodic boundary conditions are applied.
%
%
\subsubsection{Region 1 with Expanding Interface} 

For region 1, we choose $m=4$, $p=2$, $\beta = 0.5$, $b=0.5$, $C=1.0$, and $\alpha = 0.2 < (1+p)/(mp-\beta) = 0.4$. For these parameters, the interface expands and its location is given by $\eta(t) \sim \xi _{*}t^{1/(1+p-\alpha(mp-1))} = \xi _{*}t^{0.625}$ for a positive $\xi _{*}$. In order to compare numerical results with analysis, we need to solve the second order nonlinear ODE \eqref{selfsimilarODE1}, up to $\xi = \xi _{*}$ where $f(\xi _{*}) = 0$. Since $\xi _{*}$ is unknown, we transfer the BVP \eqref{selfsimilarODE1}, \eqref{selfsimilarODE1bc} to a system of IVP, by introducing another variable $g(\xi) = (f^m(\xi))' = mf^{m-1}(\xi)f'(\xi)$. We then solve the system with some given initial conditions. However, since the boundary condition \eqref{selfsimilarODE1bc} is given at negative infinity in the analysis, it is not clear how to set the initial conditions for $f(\xi)$ and $g(\xi)$, respectively.   

From the analysis, we know that $f(\xi)t^{\frac{\alpha}{1+p-\alpha(mp-1)}} \sim u(x,t)$ as $t \to 0^+$, along the curve $x = \xi t^{1/(1+p-\alpha(mp-1))}$. Thus one strategy is to use the numerical solution near the interface to estimate $f(\xi)$ and its derivative at specific value of $\zeta$. Therefore, we have the approximation $f(0) \approx u(0,t)t^{-\frac{\alpha}{1+p-\alpha(mp-1)}} = u(0,t)/t^{0.125}$ for small time $t$. Specifically, we choose to approximate $f(0)$ by 
\begin{equation}\label{eq:f0}
f(0) \approx \frac{1}{3} \sum_{i=1}^3 u(0,t_i)/t_i ^{0.125}, 
\end{equation}
where $t_1=0.01$, $t_2=0.02$ and $t_3=0.03$, respectively. Plug in the values of the numerical solution at $x=0$ and $t=t_i$, we get $f(0) \approx 0.752$. To evaluate $f'(0)$, we use the approximation $f(\xi _0) \approx u(\Delta x,t)/t^{0.125}$ and $f(2\xi _0) \approx u(2\Delta x,t)/t^{0.125}$ for small time $t$, where $\xi _0 = \Delta x/t^{1/(1+p-\alpha(mp-1))} = \Delta x/t^{0.625}$. The evaluation of $f(\xi _0)$ and $f(2\xi _0)$ is similar to \eqref{eq:f0} for $f(0)$. Then we fit a quadratic function to interpolate the three points $(0,f(0))$, $(\xi _0,f(\xi _0))$ and $(2\xi _0,f(2\xi _0))$ and use the derivative of the qudratic function at $0$ to approximate $f'(0)$. Through some simple calculation, we get $f'(0) \approx -0.309$. Finally, the values of $f(0)$ and $f'(0)$ are used as initial conditions in the ODE solver (third-order TVD Runge-Kutta Discretization) to solve for $g(\xi)$ and $f(\xi)$ . The numerical solutions for $f(\xi)$ and its derivative are plotted in \cref{fig:xi} (a) and (b), respectively. As the value of $\xi$ increases, function $f(\xi)$ decreases and the rate of decreasing gets larger. As $\xi \sim \xi _{*}$, the function becomes nonsmooth and the ODE solver fails to yield an accurate solution, even with a very small time step. We choose $\xi _{*}$ to be the value of $\xi$ which gives the smallest $f(\xi)$ and get $\xi _{*} \approx 0.696$. In \cref{fig:exp}(a), we plot the interface location computed by the WENO scheme together with the analytical curve $\eta(t) = \xi _{*}t^{0.625}$. Good agreement is achieved for small time intervals. 

We can estimate the range for $\xi _{*}$ based on analytical results given by \eqref{XI1}, \eqref{XI2} and bound $[\xi_1, \xi_2]$ given in \cref{sec: appendix}. In addition to the constants $C$, $m$, $p$ and $\alpha$, the solution to the CP \eqref{OP1}, \eqref{IF3} $w(0,1)$ with $b = 0$ and $C = 1$ is needed. We approximate the value $w(0,1)$ using the numerical solution from WENO scheme and get $w(0,1) \approx 0.725$. Then from \eqref{XI1}, \eqref{XI2} and bound $[\xi_1, \xi_2]$ given in \cref{sec: appendix} we compute the bounds for the range of $\xi _{*}$ as $\xi _1 = 0.678$ and $\xi _2 = 0.764$. It is clear that the value of $\xi _{*}$ computed before is within the range. In \cref{fig:exp}(b), it is shown that without the absorption term, the interface location given by the numerical solution is indeed bounded by the two curves predicted by analysis.

\subsubsection{Region 2 the Borderline Case}

For Region 2, we first choose $m=2.5$, $p=0.5$, $\beta=0.5$, $b=1.0$, $\alpha = (1+p)/(mp-\beta) = 2.0$. Thus $p(m+\beta) = 1+p = 1.5$. With these parameters, we have $C_{*} \approx 0.13572$. With the choice of $C > C_{*}$ and $C < C_{*}$, the analytical solution is given by the explicit formula \eqref{U2}. In \cref{fig:borderline1}, the numerical results show excellent agreement with the analytical traveling wave solution.

For the second set of tests, we choose $m=2.0$, $p=2.0$, $\beta=0.2$, $b=1.0$, and $\alpha = (1+p)/(mp-\beta) \approx 0.78947$. With these choices, we have $p(m+\beta) > 1+p$ and $C_{*} \approx 0.75655$. For this case, the analytical results are given by \eqref{U3} and \eqref{I2}. Here we use the similar strategy as described in the previous section to numerically estimate $\zeta _{*}$ through the solution of the nonlinear ODE \eqref{selfsimilarNODE3}. For the choice of $C = 1.2 > C_{*}$ and $C = 0.2 < C_{*}$, we get the estimation $\zeta _{*} \approx 12.3$ and $\zeta _{*} \approx -6.7$, respectively. The comparison between numerical solution and analysis is shown in \cref{fig:borderline2}. In the plot, the analytical curves are given by $\eta(t) = 12.3t^{1.25}$ for \cref{fig:borderline2}(a) and $\eta(t) = -6.7t^{1.25}$ for \cref{fig:borderline2}(b). The numerical results agree well with the analysis for short time durations. 

Finally we choose $m=0.5$, $p=2.0$, $\beta=0.2$, $b=1.0$, and $\alpha = (1+p)/(mp-\beta) = 3.75$. With these choices, we have $p(m+\beta) < 1+p$ and $C_{*} \approx 0.1032$. For the choice of $C = 0.4 > C_{*}$ and $C = 0.05 < C_{*}$, we solve the nonlinear ODE \eqref{selfsimilarNODE3} and get the estimation $\zeta _{*} \approx 0.895$ and $\zeta _{*} \approx -0.586$, respectively. As shown in \cref{fig:borderline3}, the agreement between numerics and analysis is again very good. In the plot, the analytical curves are given by $\eta(t) = 0.895t^{\frac{1}{3}}$ for \cref{fig:borderline3}(a) and $\eta(t) = -0.586t^{\frac{1}{3}}$ for \cref{fig:borderline3}(b).  
 \subsubsection{Region 3 with Shrinking Interface}

In Region 3, we choose the parameters $m=4$,$p=2$,~$\beta = 0.5$, $b=0.8$, $C=0.5$, and $\alpha=0.8 > (1+p)/(mp-\beta) = 0.4$. For this choice, the absorption term dominates and the interface shrinks. The analytical solution and interface location are given by \eqref{U5} and \eqref{I4}, respectively. Comparison between numerical and analytical results is plotted in \cref{fig:shrink}. 
It is interesting to note that for the interface location as shown in \cref{fig:shrink}(a), excellent agreement is obtained between the numerics and analysis during the whole simulation, even though the analysis is valid only for short time period. In \cref{fig:shrink}(b), the numerical solution $u(x,t)$ near the interface matches well with that from the analysis.     
\subsubsection{Region 4 with Waiting Time}

In Region 4, we choose $m=2$, $p=3$, $\beta = 1.0$, $b=0.5$, $C=0.5$. Corresponding to the analysis for Region (4a), we set $\alpha = (1+p)/(mp-1) = 0.8$. With these parameters, numerical solutions at different time are plotted in \cref{fig:case4a}(a). It is clear that the interface at $x=0$ remains stationary up to $t=1$. In \cref{fig:case4a}(b), the numerical solution near the interface agrees well with the analytical result given by \eqref{U6}.

\section*{Acknowledgement}
This research was funded by National Science Foundation: grant \#1359074--REU Site: Partial Differential Equations and Dynamical Systems at Florida Institute of Technology (Principal Investigator Professor Ugur G. Abdulla). 
	\FloatBarrier
           \section{Appendix A}\label{sec: appendix}
Here we give explicit values of the constants used in \cref{sec: description of the main results}.\\\\
$\xi_1=p^{\frac{1}{1+p}}\Big(\alpha(mp-1)\Big)^{-\frac{1}{1+p}},~ \xi_2=1, ~\text{if}~ p(mp-1)^{-1}\leq \alpha<(1+p)(mp-1)^{-1}$, 

$\xi_1=1,~\xi_2=p^{\frac{1}{1+p}}\Big(\alpha(mp-1)\Big)^{-\frac{1}{1+p}}, ~\text{if}~ 0<\alpha\leq p(mp-1)^{-1}$;

$\zeta_1=A^{\frac{mp-1}{1+p}}\big(1+b(1-\beta)A_1^{\beta-1}\big)^{-\frac{1}{1+p}}(p(mp)^{p}(1-\beta))^\frac{1}{1+p}(mp-1)^{-1},~~~~$

$C_1=A_1 \zeta_1^{-\frac{p}{mp-1}},~\text{if}~p(m+\beta)>1+p, C>C_*$,\\\\
$\zeta_1=A_1^{\frac{mp-1}{1+p}}\big(1+b(1-\beta)A_1^{\beta-1}\big)^{-\frac{1}{1+p}}((m(1+p))^{p}p(m+\beta)(1-\beta))^{\frac{1}{1+p}}(mp-\beta)^{-1},~~~$\\

$C_1=A_1 \zeta_1^{-\frac{1+p}{mp-\beta}},~\text{if}~p(m+\beta)<1+p, C>C_*$,\\\\
$\zeta_2=A_1^{\frac{mp-1}{1+p}}\big(1+b(1-\beta)A_1^{\beta-1}\big)^{-\frac{1}{1+p}}((m(1+p))^{p}p(m+\beta)(1-\beta))^{\frac{1}{1+p}}(mp-\beta)^{-1},~~~~$\\

$ C_2=A_1 \zeta_2^{-\frac{1+p}{mp-\beta}},~\text{if}~p(m+\beta)>1+p, C>C_*$,\\\\
$\zeta_2=(A_1/C_*)^{\frac{mp-\beta}{1+p}},~C_2=C_* ,~\text{if}~p(m+\beta)<1+p, C>C_*$;\\\\
$\zeta_1=-C^{-\frac{mp-\beta}{1+p}}\big(b(1-\beta)\big)^{\frac{mp-\beta}{(1+p)(1-\beta)}},~\text{if}~p(m+\beta)>1+p, 0<C<C_*$,\\\\
$\zeta_2=-C^{-\frac{mp-\beta}{1+p}}\bigg(b(1-\beta)\big(1-\big(C/C_*)^{mp-\beta}\big)^{\frac{mp-\beta}{(1+p)(1-\beta)}}\bigg),~\text{if}~p(m+\beta)<1+p, 0<C<C_*$,\\\\
$R_1 = (m(1+p))^{p}p(1+p-p(m+\beta))(b(mp-\beta)^{1+p})^{-1},$ \\\\
$R_2 = (m(1+p))^{p}(1+p)p(m+\beta-1)(b(mp-\beta)^{1+p})^{-1},$\\\\
$\theta_*=\Bigg[1-\Big(C/C_*\Big)^{mp-\beta}\Bigg]\Bigg[\Big(C_*/C\Big)^{\frac{(mp-\beta)(1-\beta)}{1+p-p(m+\beta)}}-1\Bigg]^{-1},$\\\\
$\ell_0=C_*^{-\frac{mp-\beta}{1+p}}(C_*/C)^{\frac{(mp-\beta)(1-\beta)}{1+p-p(m+\beta)}}(b(1-\beta)\theta_*)^{\frac{mp-\beta}{(1+p)(1-\beta)}}$,\\\\
$\zeta_3=C_*^{-\frac{mp-\beta}{1+p}}\Big[(C_*/C)^{\frac{(mp-\beta)(1-\beta)}{1+p-p(m+\beta)}}-1\Big](b(1-\beta)\theta_*)^{\frac{mp-\beta}{(1+p)(1-\beta)}}$,\\\\
$\ell_1=C^{-\frac{mp-\beta}{1+p}}\Big[b(1-\beta)(\delta_* \Gamma)^{-1}\Big(1-\delta_* \Gamma-\big(1-\delta_* \Gamma\big)^{-p}(C/C_*)^{mp-\beta}\Big) \Big]^{\frac{mp-\beta}{(1+p)(1-\beta)}}$,\\\\
$\zeta_4=\delta_* \Gamma \ell_1,~\Gamma=1- (C/C_*)^{\frac{mp-\beta}{1+p}},~C_3=C \big(1-\delta_* \Gamma\big)^{-\frac{1+p}{mp-\beta}},$~\\\\
where $\delta_*\in(0,1)$ satisfies:

$g(\delta_*)=\underset{[0,1]}\max g(\delta),\quad ~g(\delta)=\delta^{\frac{1+p-p(m+\beta)}{mp-\beta}}\Big(1-\delta \Gamma-\big(1-\delta \Gamma\big)^{-p}(C/C_*)^{mp-\beta}\Big)$.\\\\
$\bar C=\bigg[\frac{(mp-1)^{1+p}}{p(m+1)(m(1+p))^{p}}\bigg]^\frac{1}{mp-1}$, \ 
$\gamma_{\epsilon} = \frac{p(m+1)(m(1+p))^{p}(C+\epsilon)^{mp-1}}{(mp-1)^{p}} +\epsilon$.\\\\
$\xi_3=A_0^{\frac{mp-1}{1+p}}\Bigg[\frac{(mp)^{p}(1+p-\alpha(mp-1))}{(mp-1)^{p}}\Bigg]^{\frac{1}{1+p}}C^{\frac{mp-1}{1+p-\alpha(mp-1)}}\xi_1$,\\\\
$\xi_4=A_0^{\frac{mp-1}{1+p}}\Bigg[\frac{(mp)^{p}(1+p-\alpha(mp-1))}{(mp-1)^{p}}\Bigg]^{\frac{1}{1+p}}C^{\frac{mp-1}{1+p-\alpha(mp-1)}}\xi_2$,\\\\
$C_4 =C^{(1+p)/(1+p-\alpha(mp-1))}A_0\xi_3^{p/(1-mp)}, \, C_5 =C^{(1+p)/(1+p-\alpha(mp-1))}A_0\xi_4^{p/(1-mp)}$.\\\\
$\zeta_5= (\ell_*/\ell)^{\alpha(1-\beta)}(1-\epsilon)\ell$,\\\\
$C_6=\big[1-(\ell_*/\ell)^{\alpha(1-\beta)}(1-\epsilon)\big]^{-\alpha}\big[C^{1-\beta}-\ell^{-\alpha(1-\beta)}b(1-\beta)(1-\epsilon))\big]^{1/(1-\beta)}.$\\\\

\section{Appendix B}
\label{sec:fig}
Here we list the figures corresponding to the numerical results as described in  \cref{sec: numerical solution}.
\begin{figure}[tbhp]
\centering
\includegraphics[width=0.75\textwidth]{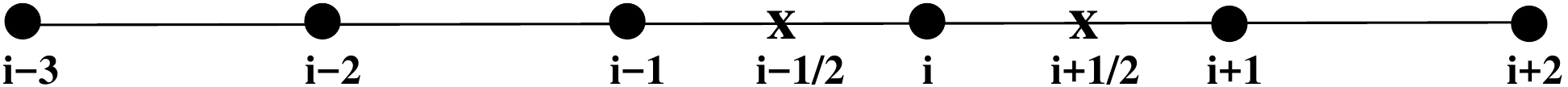}
\caption{Computational grid for the WENO scheme}
\label{fig:weno}
\end{figure}
\begin{figure}[tbhp]
\centering
\subfloat[$p=2$]{\includegraphics[width=0.43\textwidth]{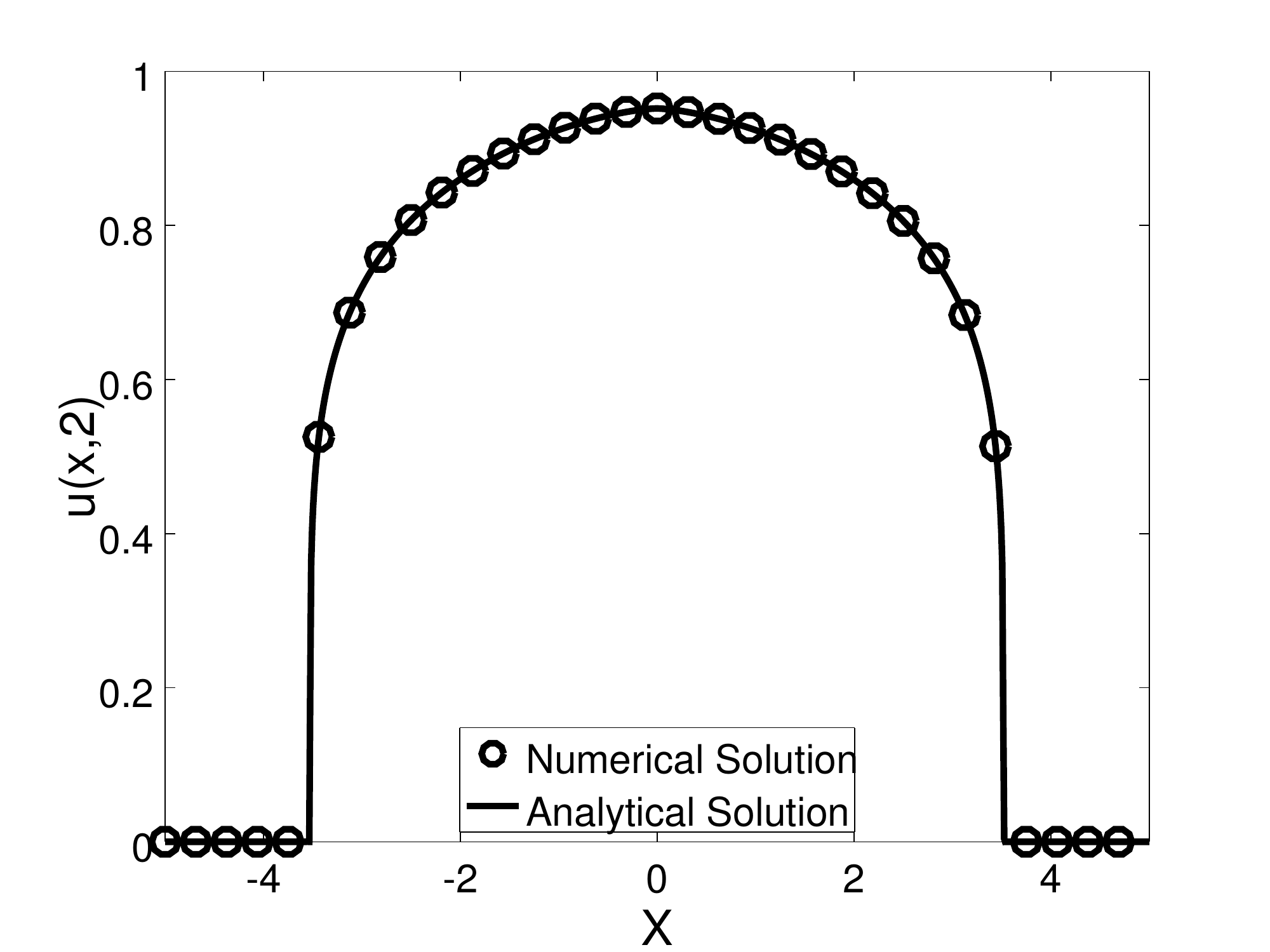}}
\subfloat[$p=3$]{\includegraphics[width=0.43\textwidth]{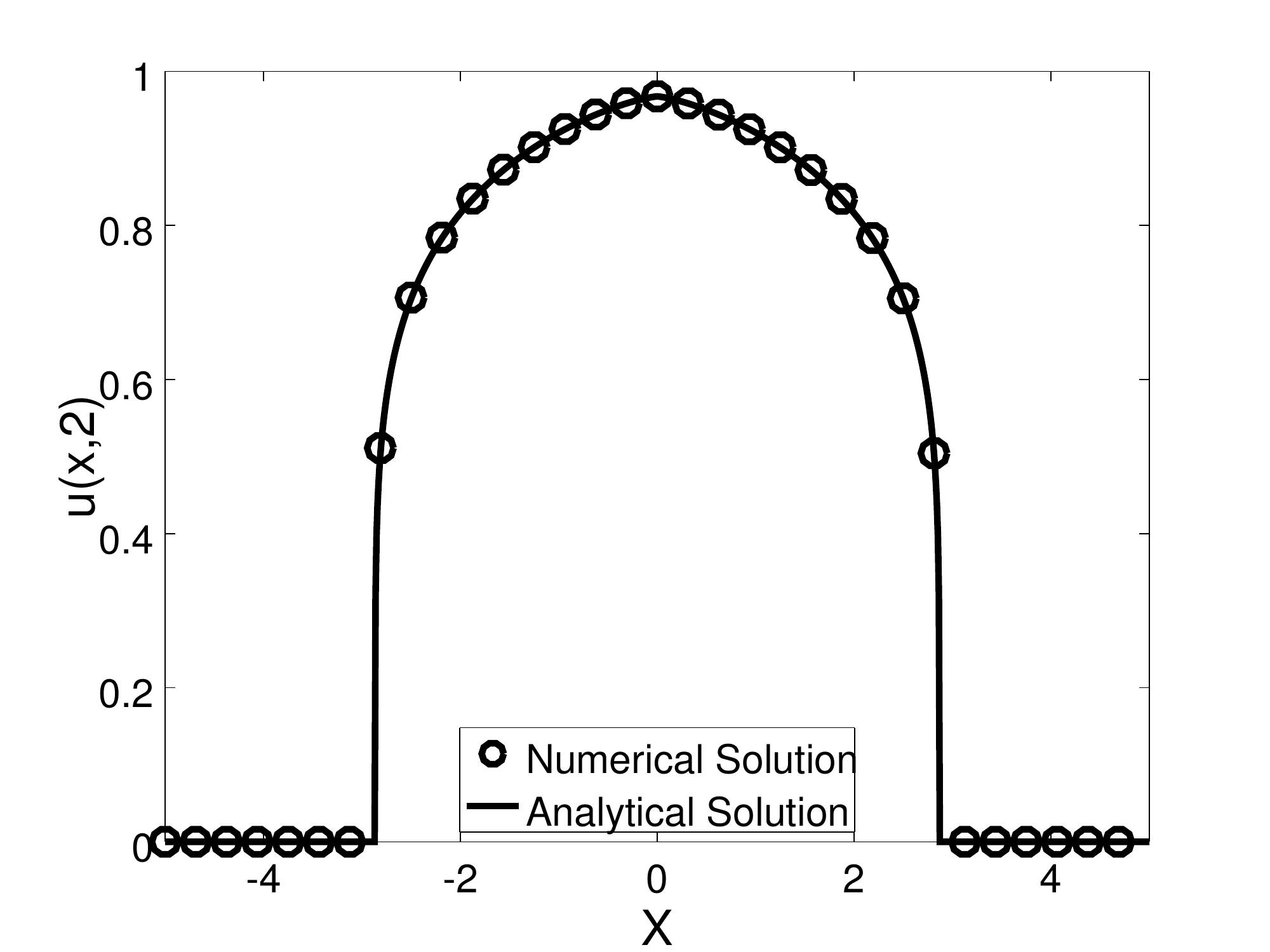}} 
\caption{IPS Solution: numerical and analytical solution at t = 2.0}
\label{fig:zkbu}
\end{figure}
\begin{figure}[tbhp]
\centering
\subfloat[$p=2$]{\includegraphics[width=0.43\textwidth]{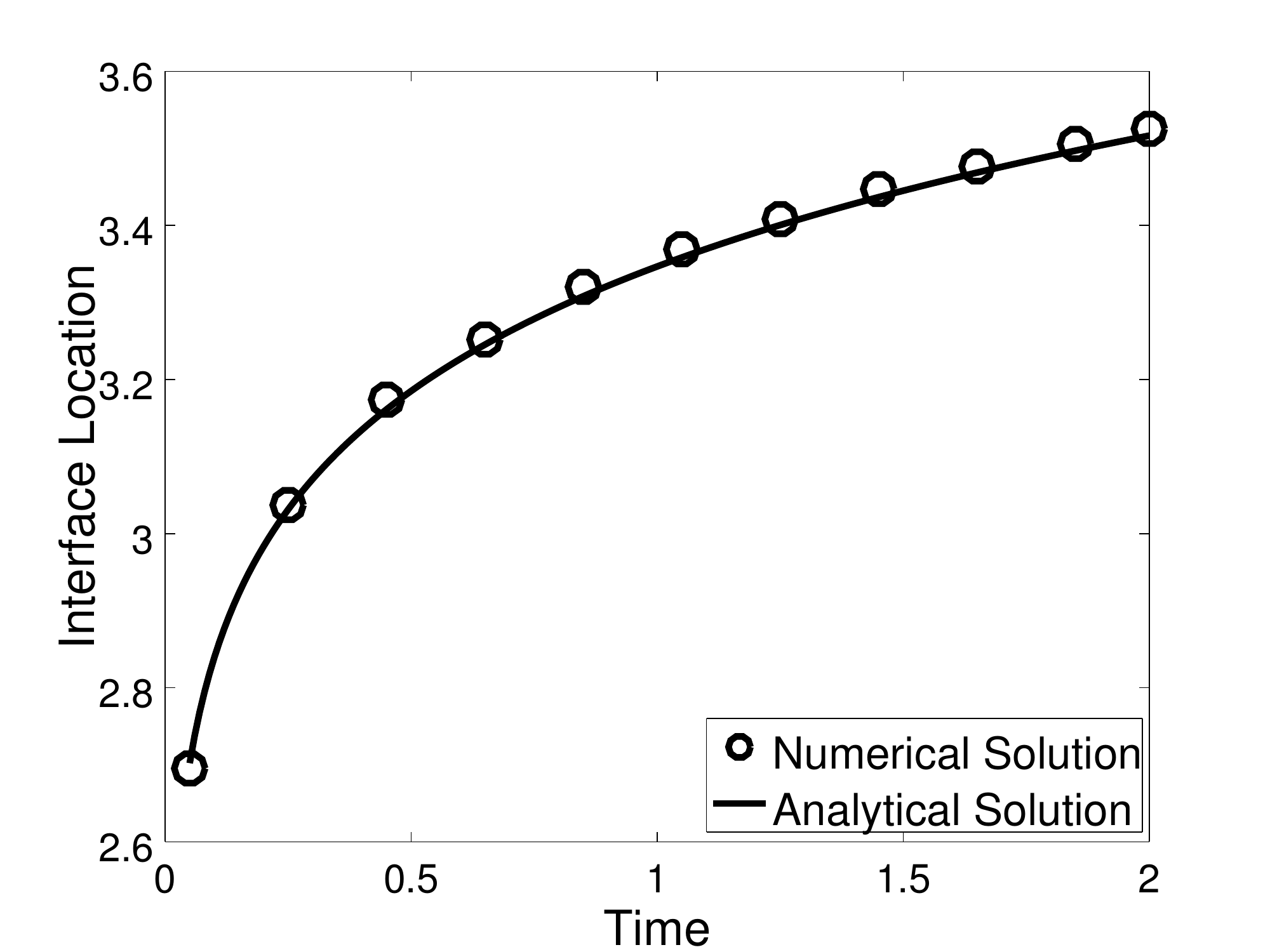}}
\subfloat[$p=3$]{\includegraphics[width=0.43\textwidth]{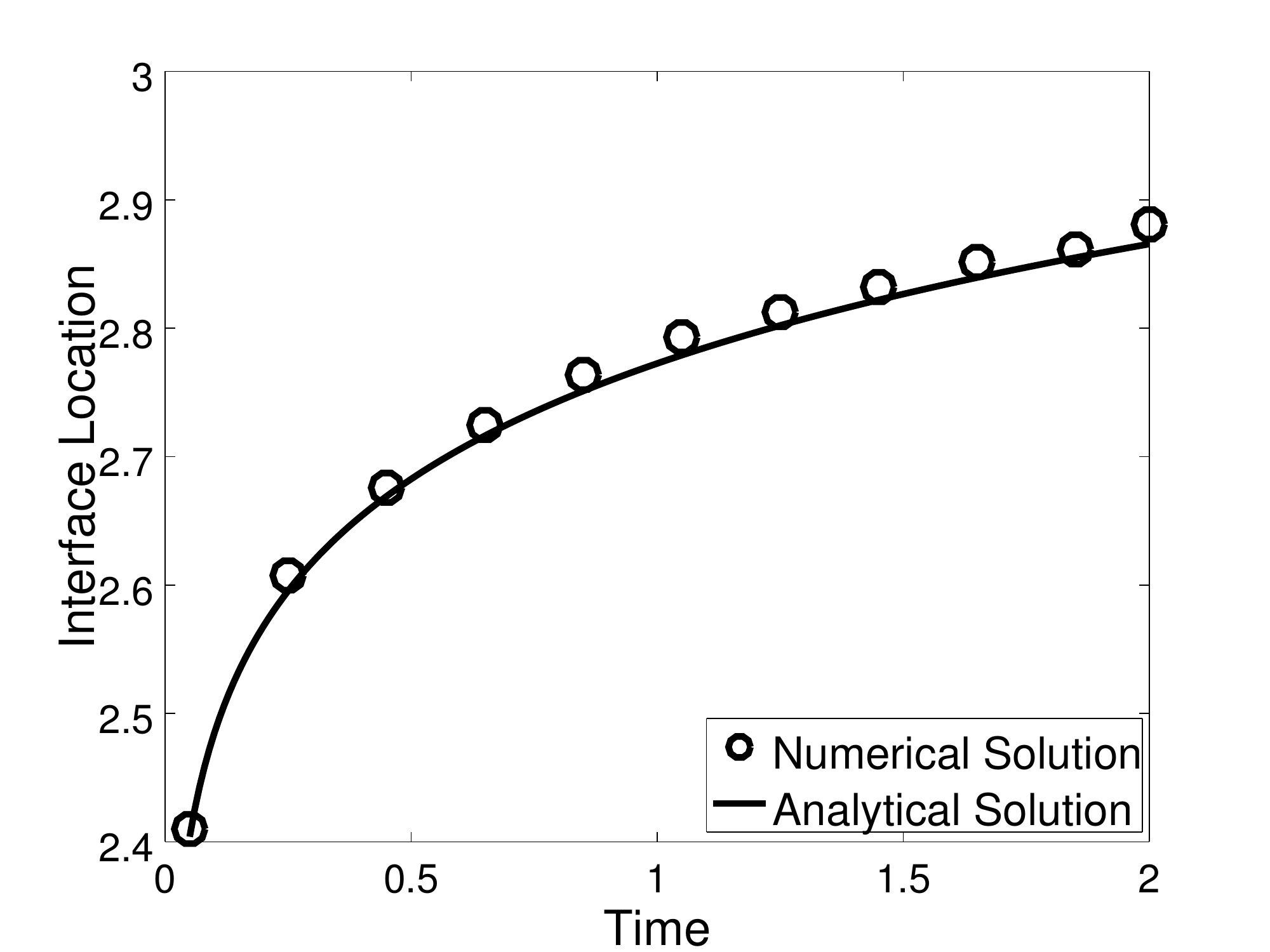}} 
\caption{IPS Solution: Interface Location Vs. Time}
\label{fig:zkbeta}
\end{figure}
\clearpage
\begin{figure}[tbhp]
\centering
\includegraphics[width=0.45\textwidth]{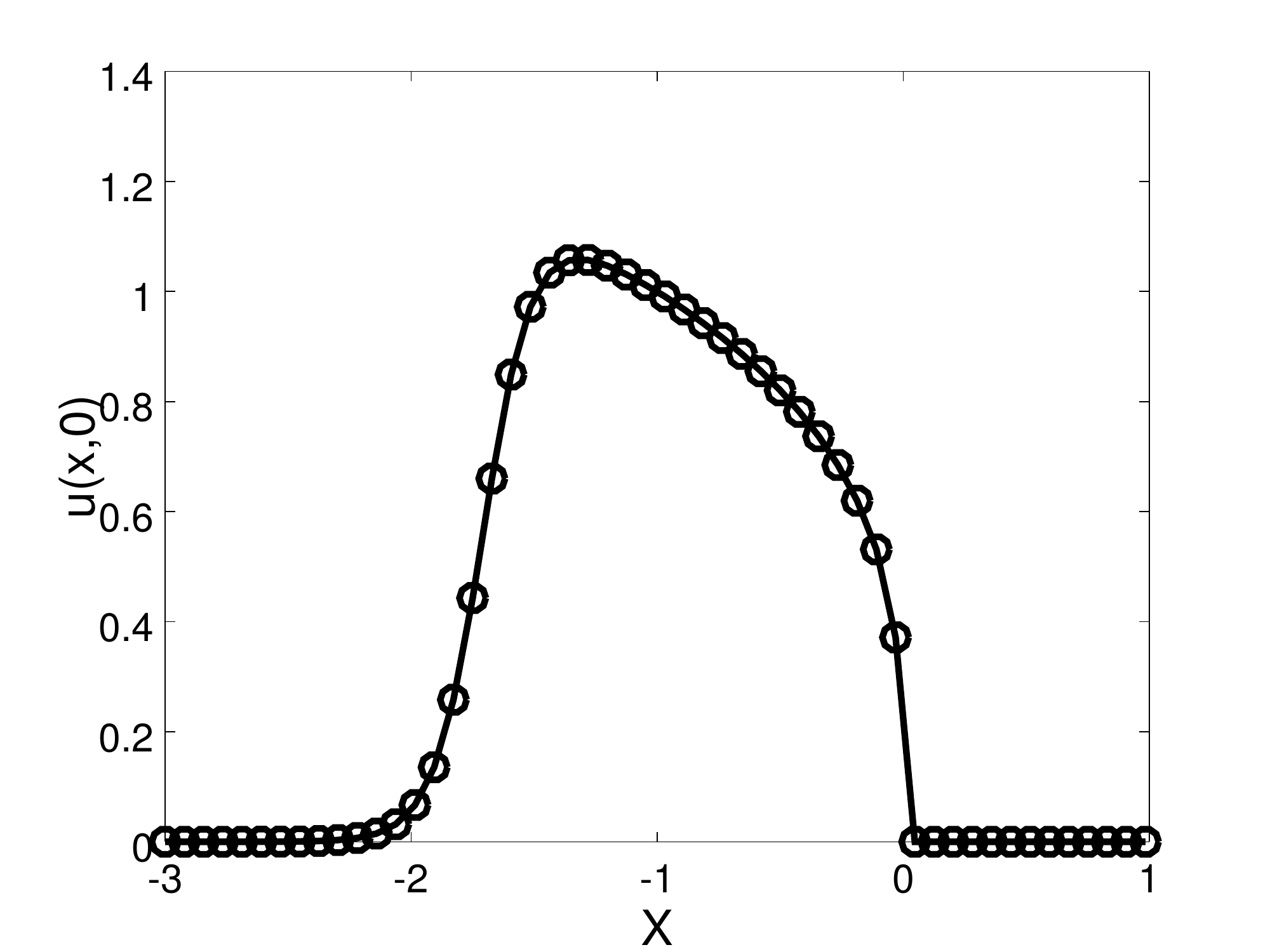}
\caption{Initial condition used for computational study}
\label{fig:IC}
\end{figure}
\begin{figure}[tbhp]
\centering
\subfloat[value of $f(\zeta)$]{\includegraphics[width=0.43\textwidth]{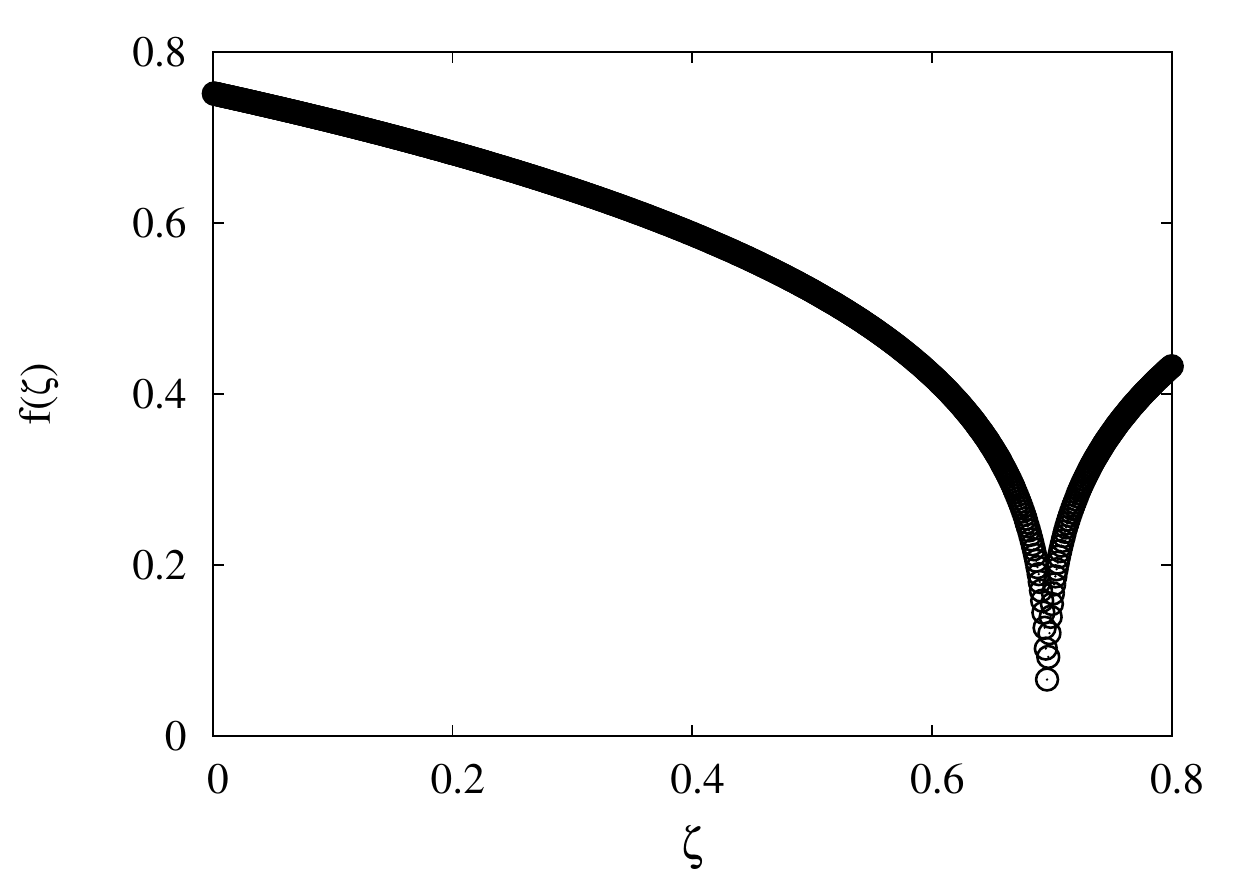}}
\subfloat[derivative of $f(\zeta)$]{\includegraphics[width=0.43\textwidth]{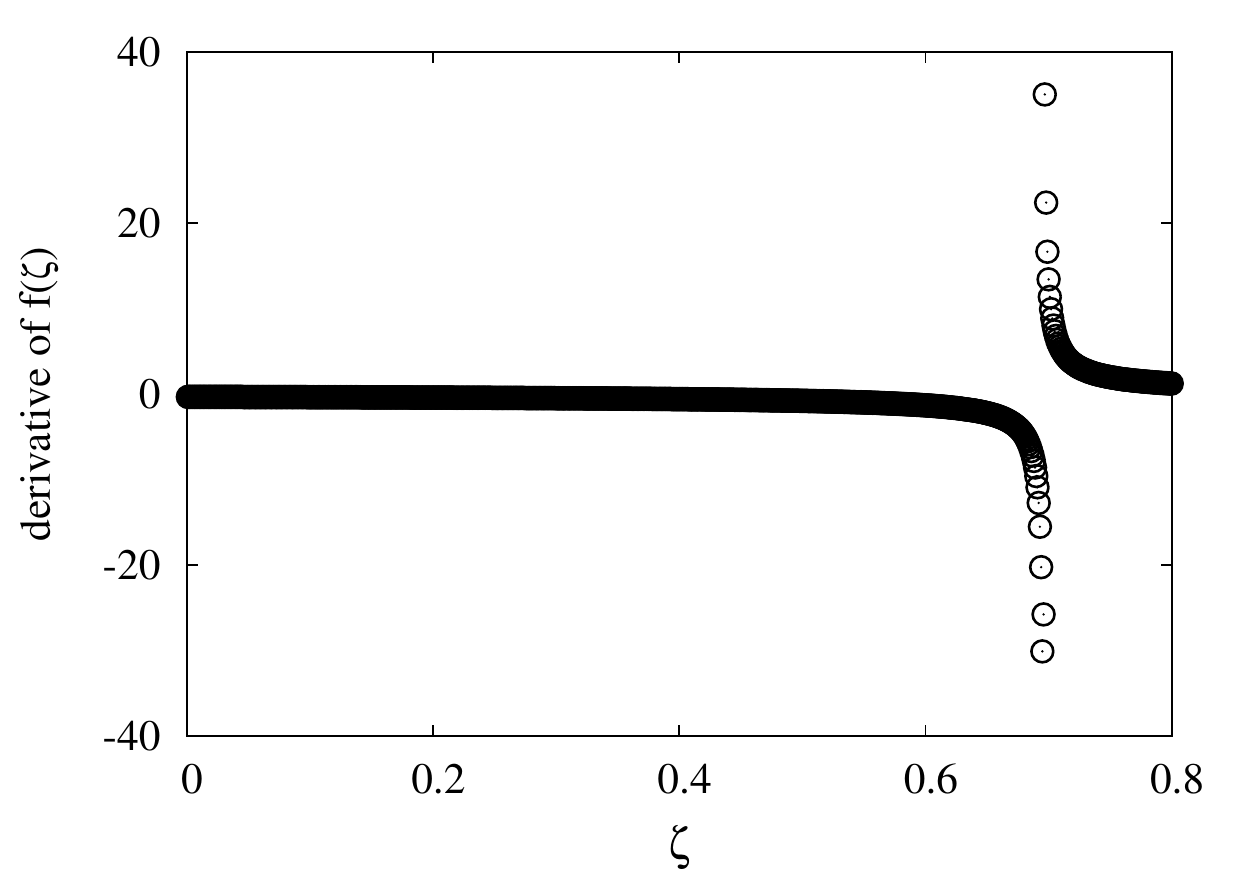}}
\caption{Numerical solution for $f(\zeta)$ and $f'(\zeta)$}
\label{fig:xi}
\end{figure}
\begin{figure}[tbhp]
\centering
\subfloat[$b=0.5$]{\includegraphics[width=0.43\textwidth]{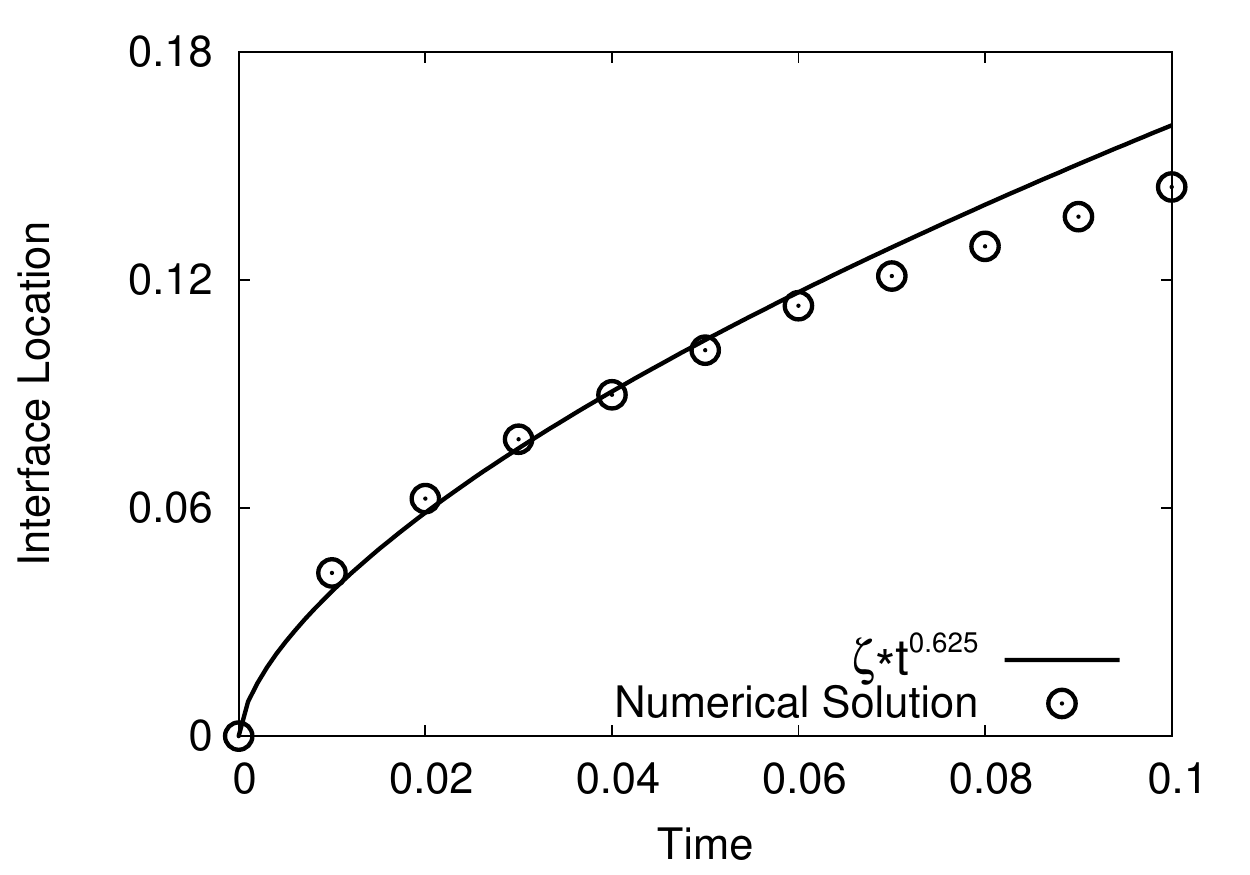}}
\subfloat[$b=0.0$]{\includegraphics[width=0.43\textwidth]{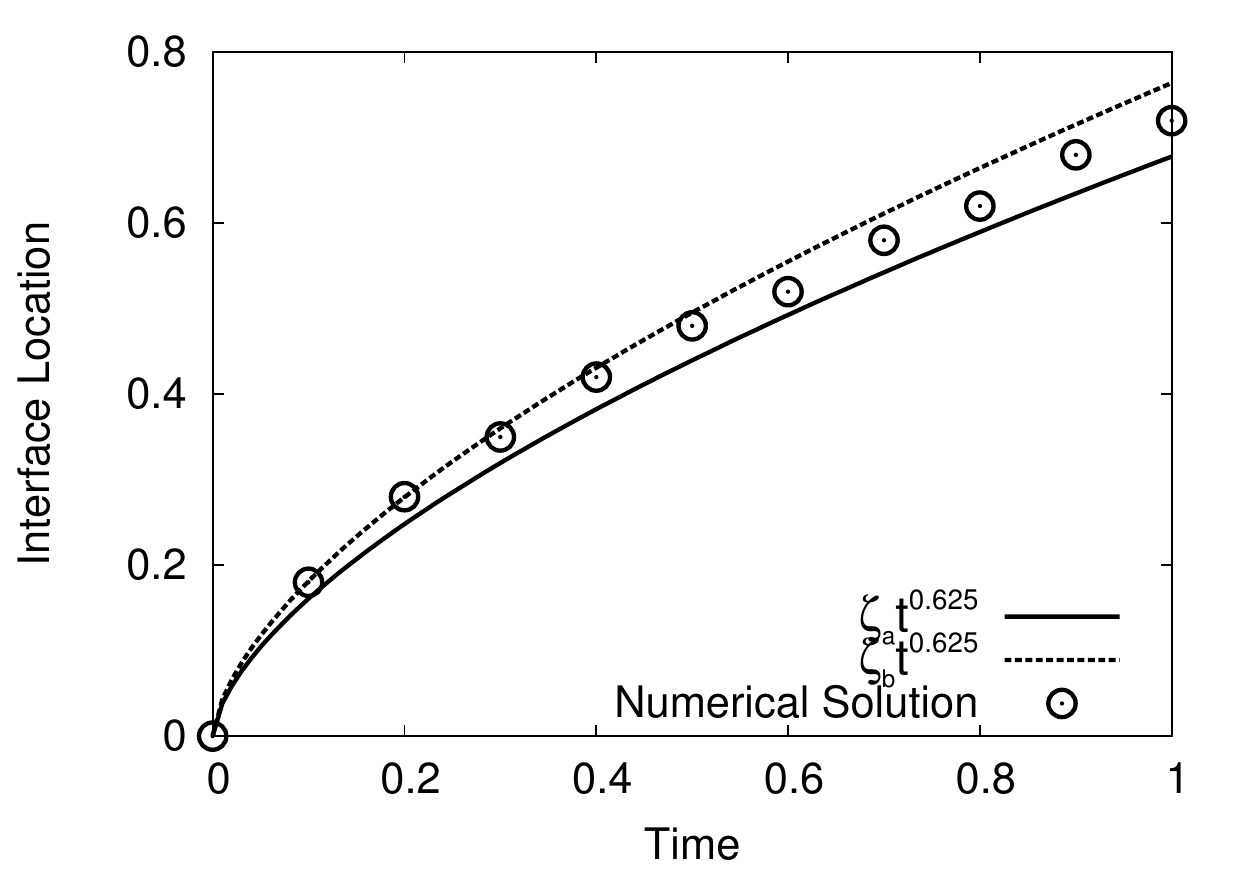}}
\caption{Region 1: Interface Location Vs. Time with (left) and without (right) the absorption term}
\label{fig:exp}
\end{figure}
\begin{figure}[tbhp]
\centering
\subfloat[$C = 0.5 > C_{*}$]{\includegraphics[width=0.43\textwidth]{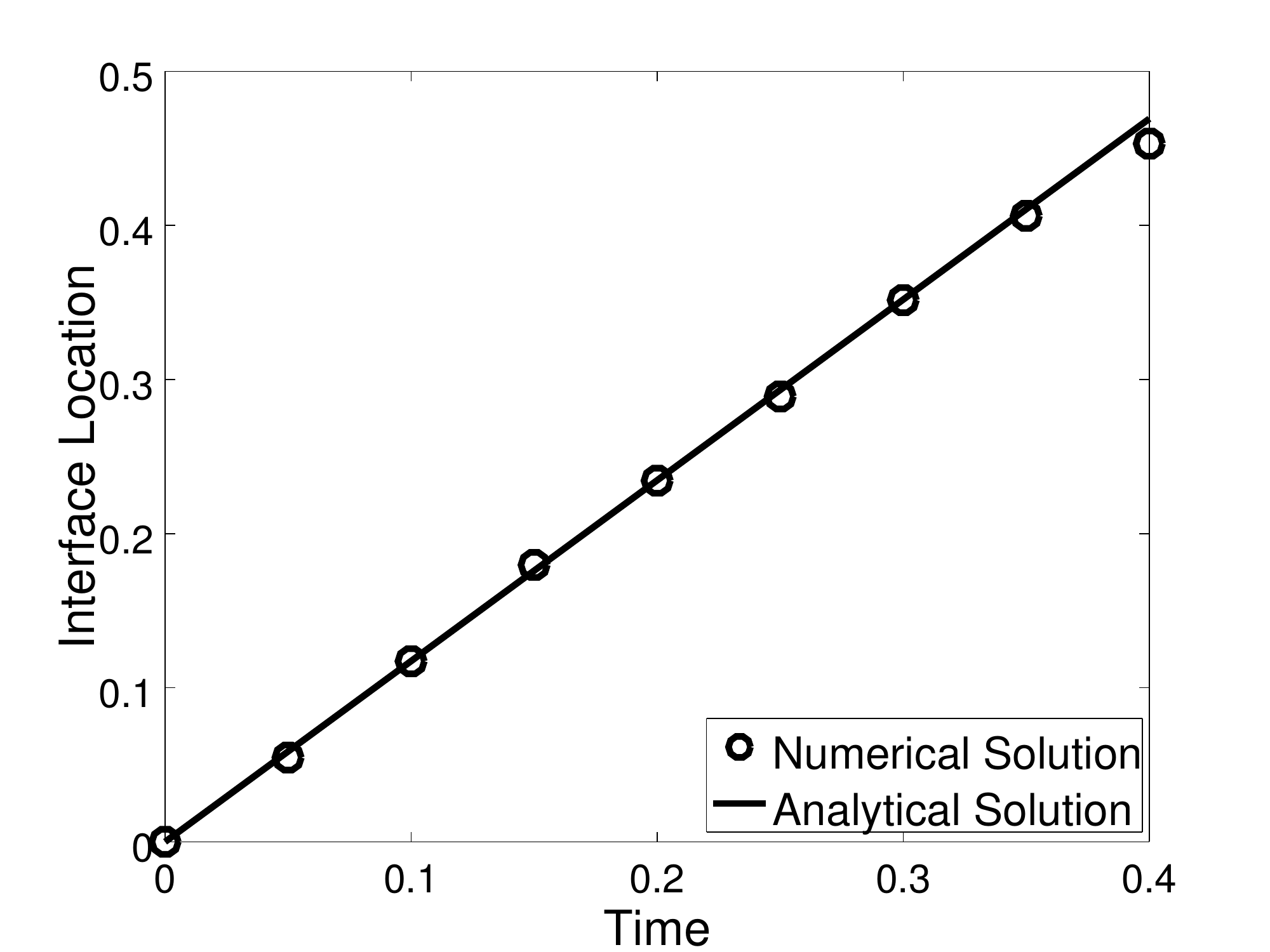}}
\subfloat[$C = 0.06 < C_{*}$]{\includegraphics[width=0.43\textwidth]{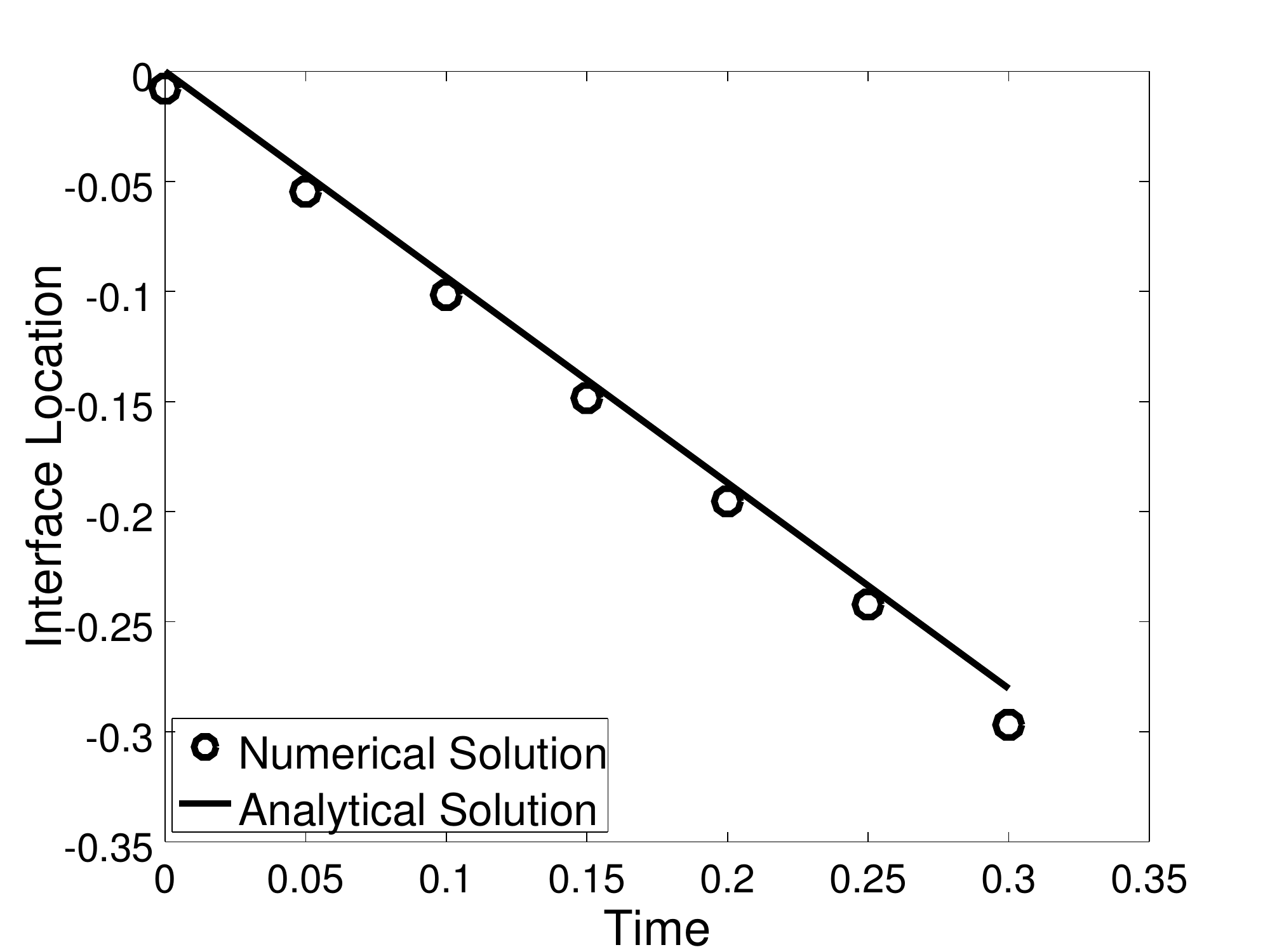}}
\caption{Region 2 with $p(m+\beta) = 1+p$. Interface Location Vs. Time for different $C$}
\label{fig:borderline1}
\end{figure}
\begin{figure}[tbhp]
\centering
\subfloat[$C = 1.2 > C_{*}$]{\includegraphics[width=0.43\textwidth]{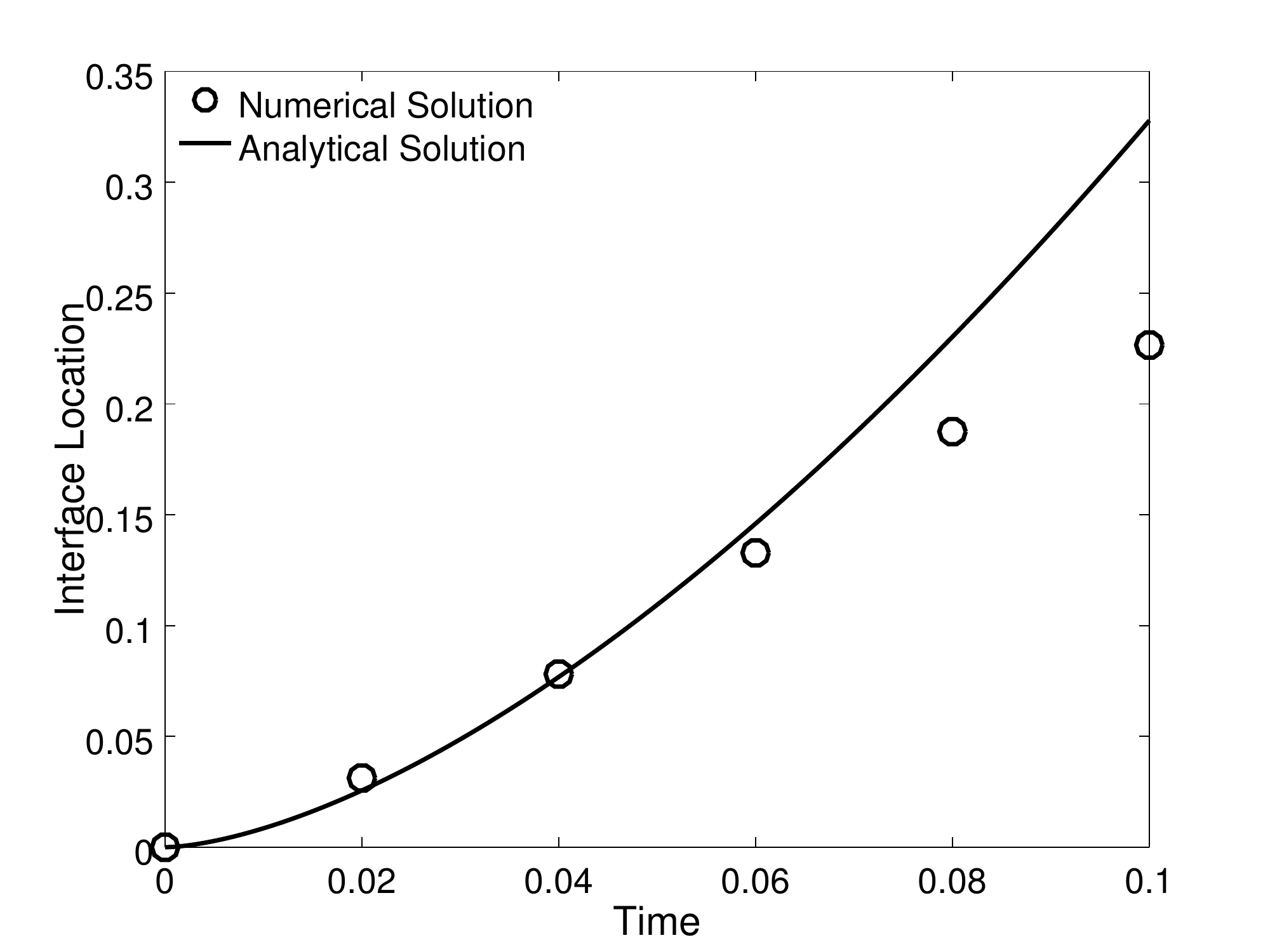}}
\subfloat[$C = 0.2 < C_{*}$]{\includegraphics[width=0.43\textwidth]{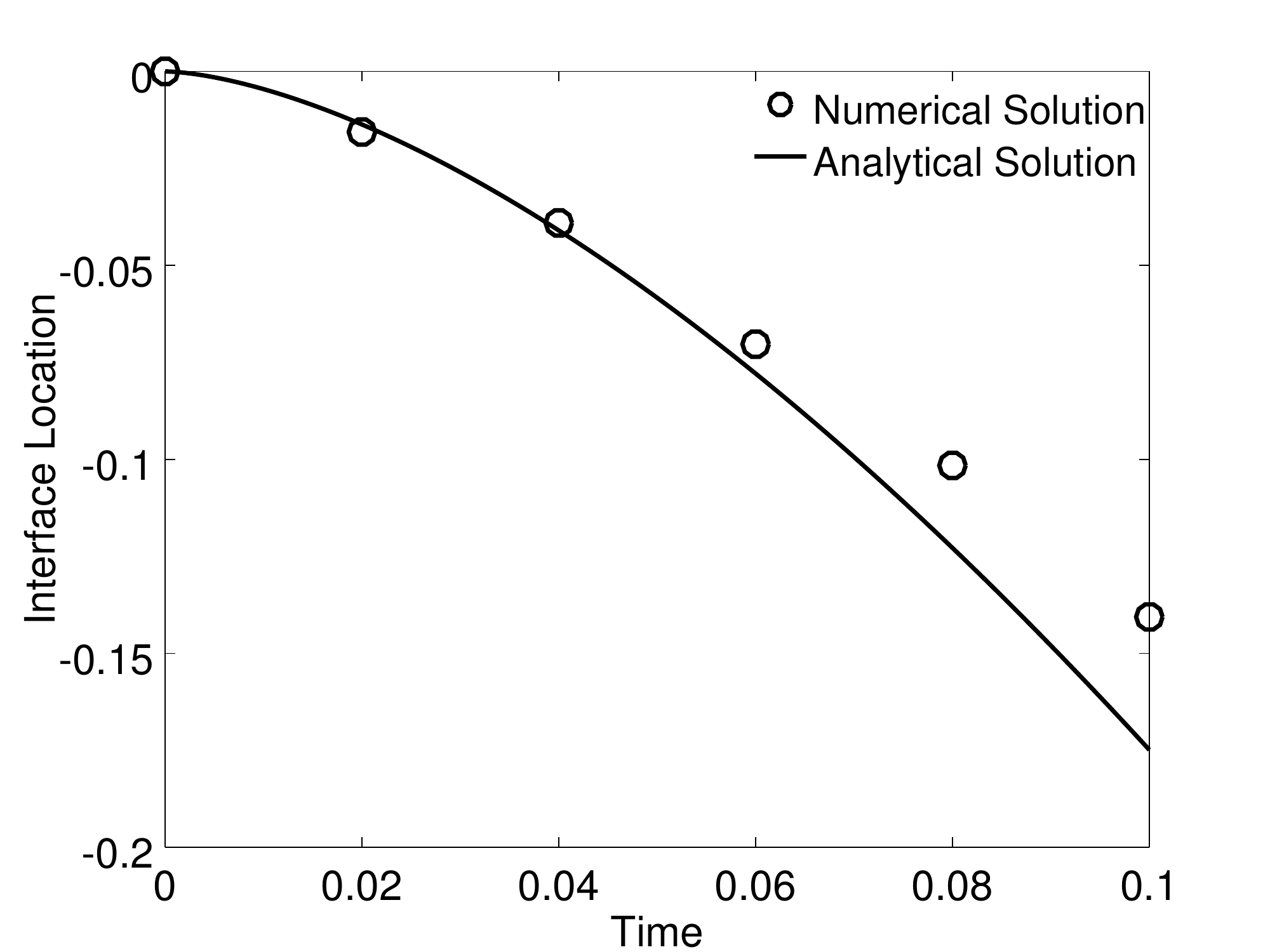}}
\caption{Region 2 with $p(m+\beta)> 1+p$. Interface Location Vs. Time for different $C$}
\label{fig:borderline2}
\end{figure}
\begin{figure}[tbhp]
\centering
\subfloat[$C = 0.4 > C_{*}$]{\includegraphics[width=0.43\textwidth]{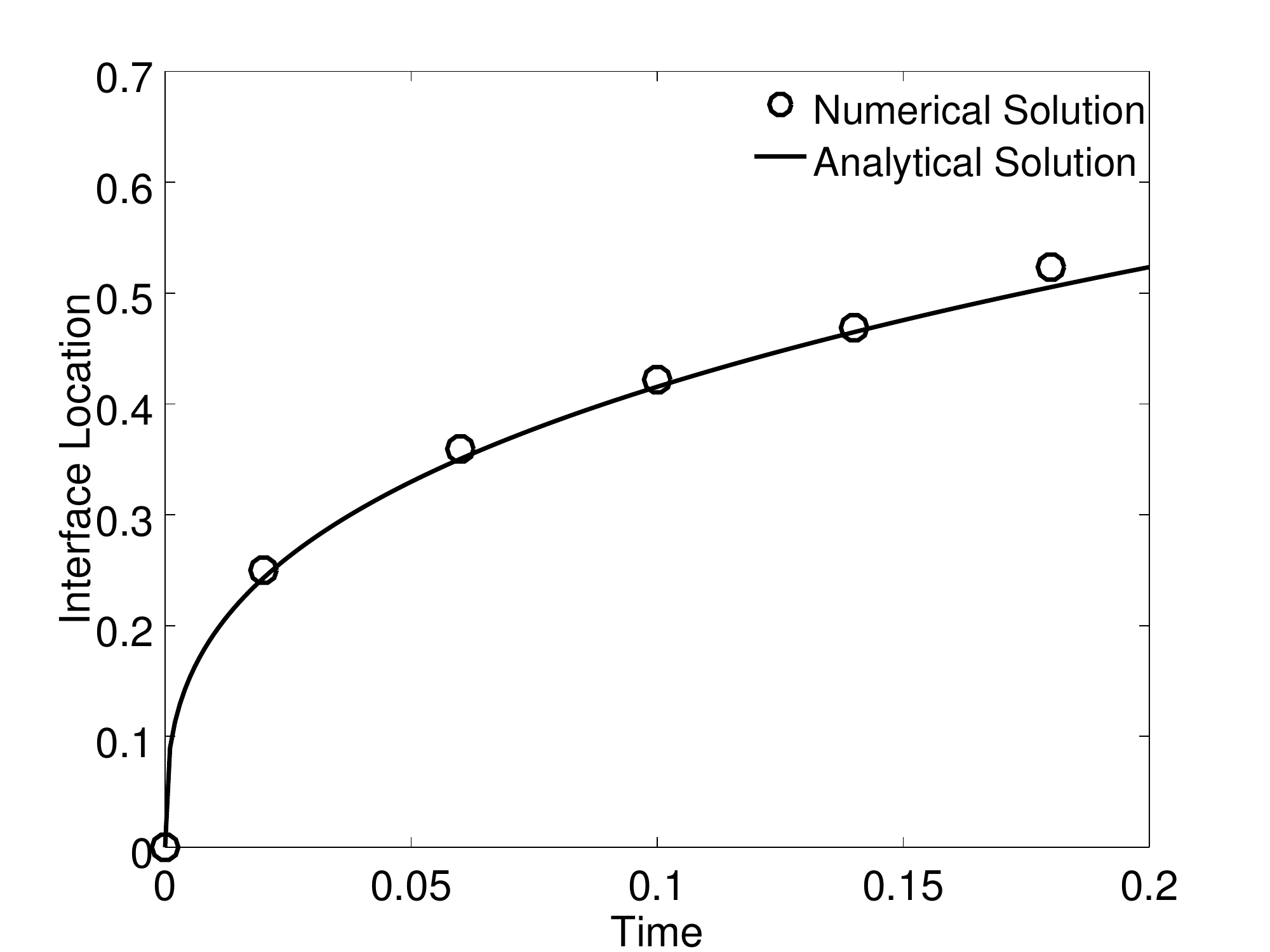}}
\subfloat[$C = 0.05 < C_{*}$]{\includegraphics[width=0.43\textwidth]{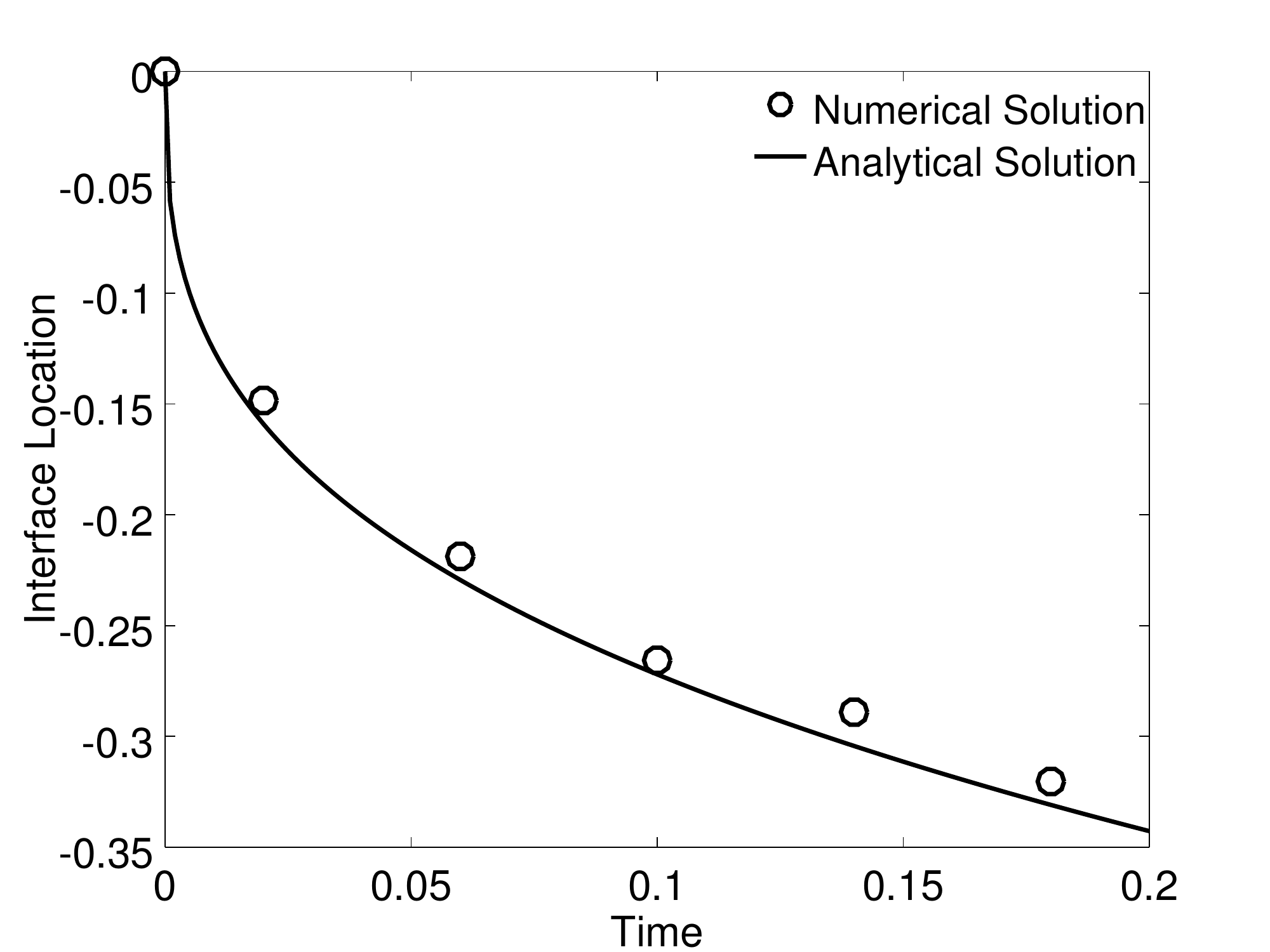}}
\caption{Region 2 with $p(m+\beta)< 1+p$. Interface Location Vs. Time for different $C$}
\label{fig:borderline3}
\end{figure}
\begin{figure}[tbhp]
\centering
\subfloat[$\eta(t)$ vs. Time]{\includegraphics[width=0.43\textwidth]{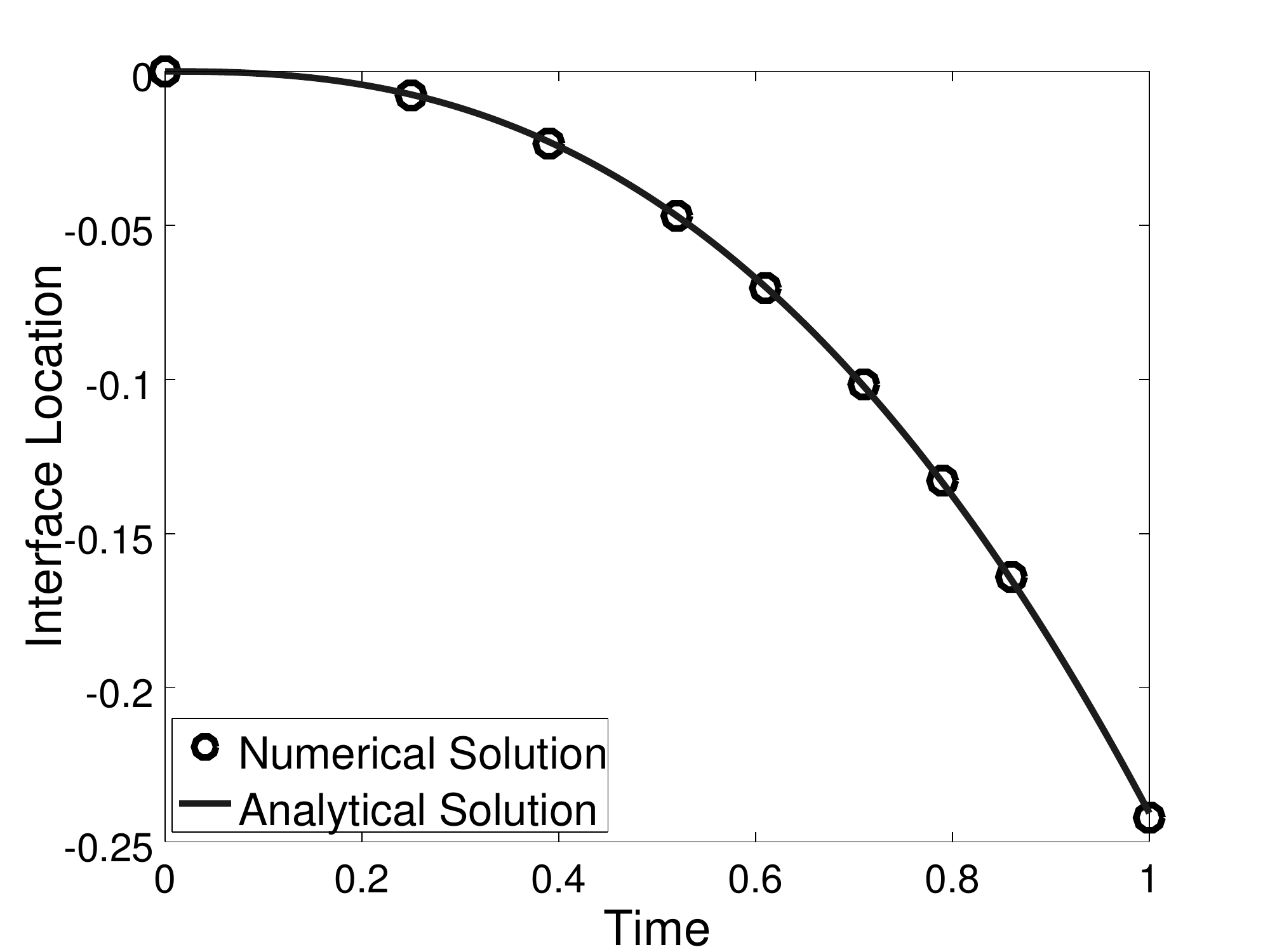}}
\subfloat[Solution at $t = 0.9$]{\includegraphics[width=0.43\textwidth]{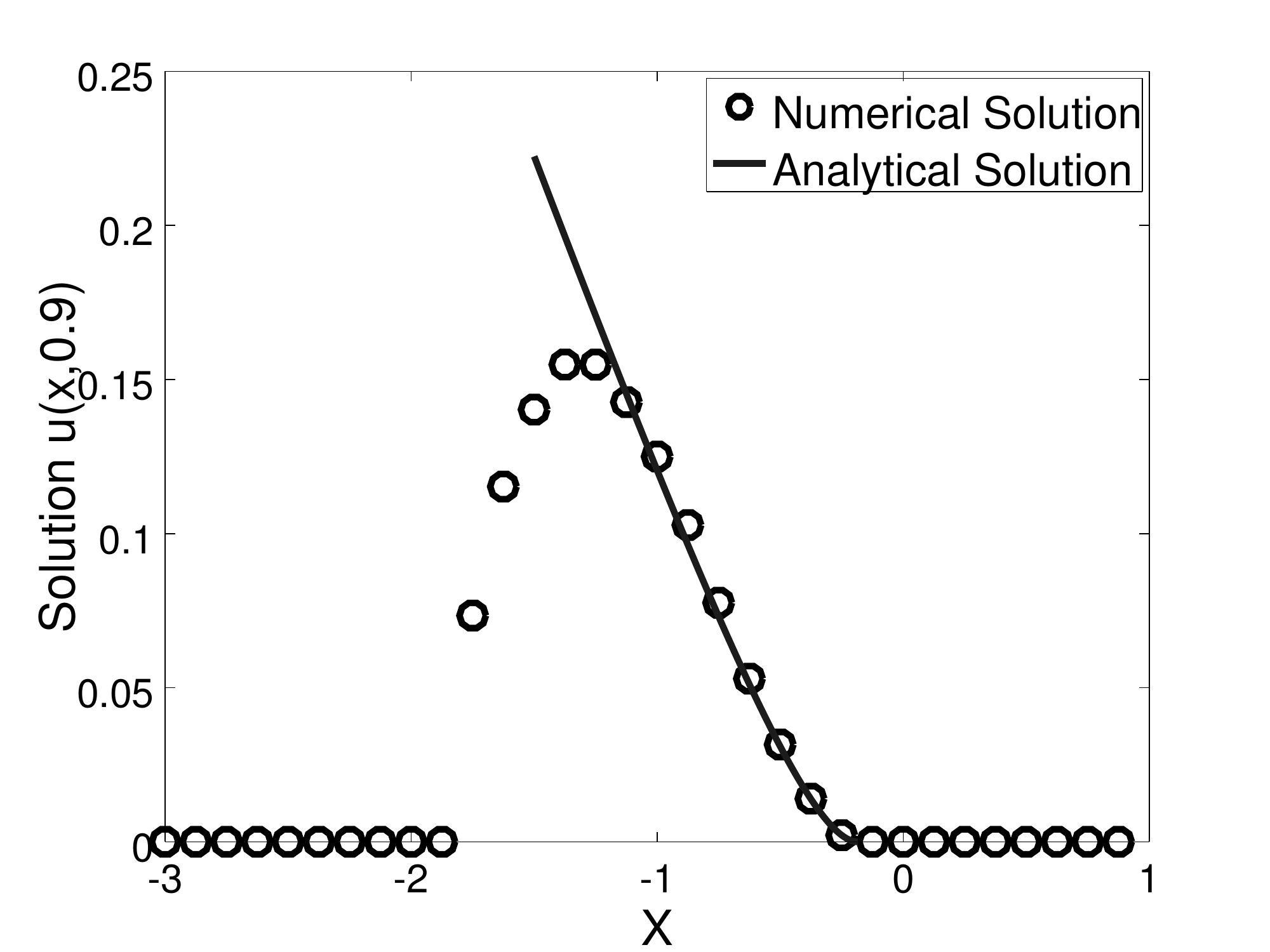}}
\caption{Region 3 Left: Interface Location Vs. Time; Right: Solution at $t=0.9$}
\label{fig:shrink}
\end{figure}
\begin{figure}[htbp]
\centering
\subfloat[Solution at different time]{\includegraphics[width=0.43\textwidth]{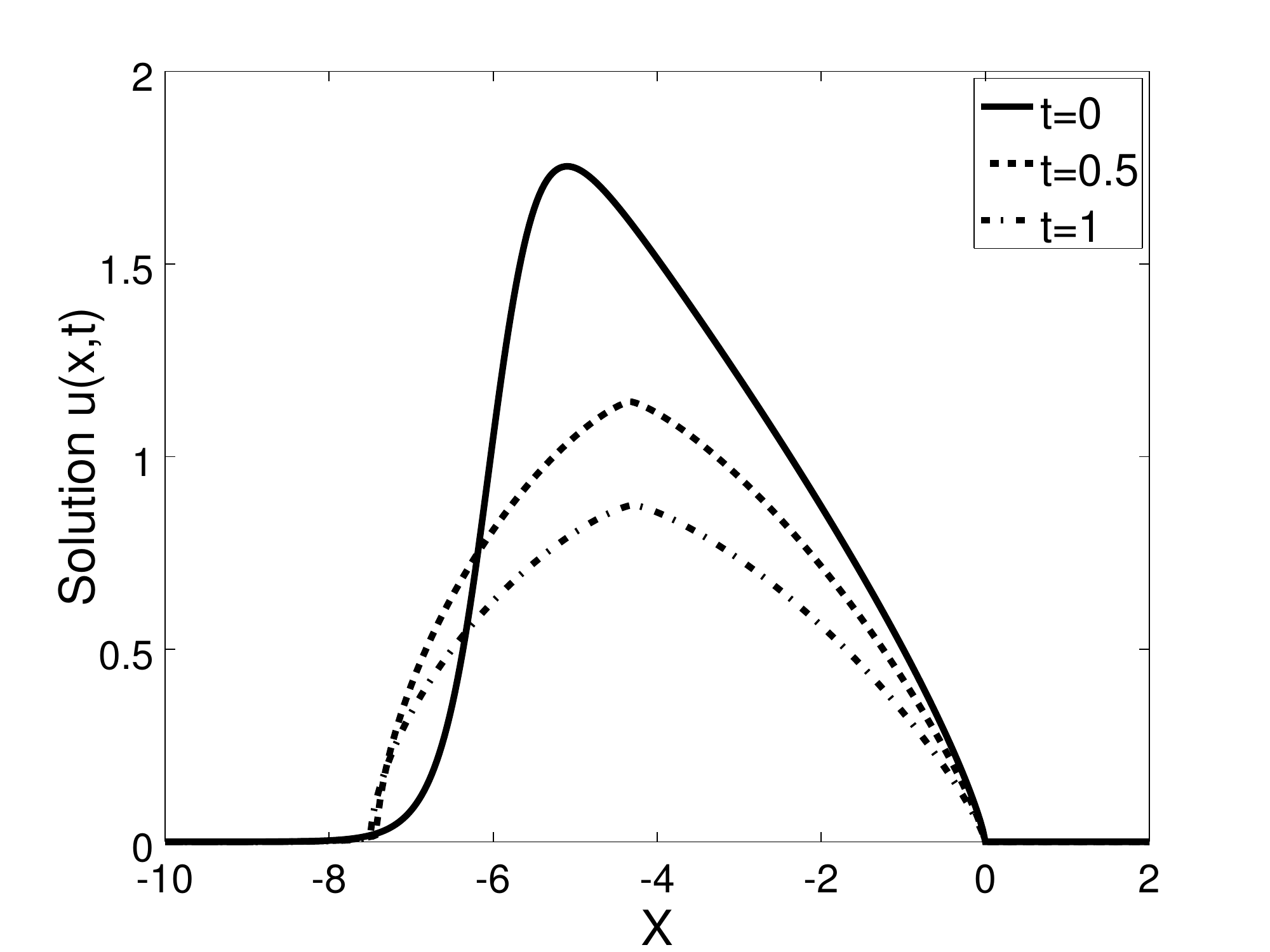}}
\subfloat[Solution at $t = 0.7$]{\includegraphics[width=0.43\textwidth]{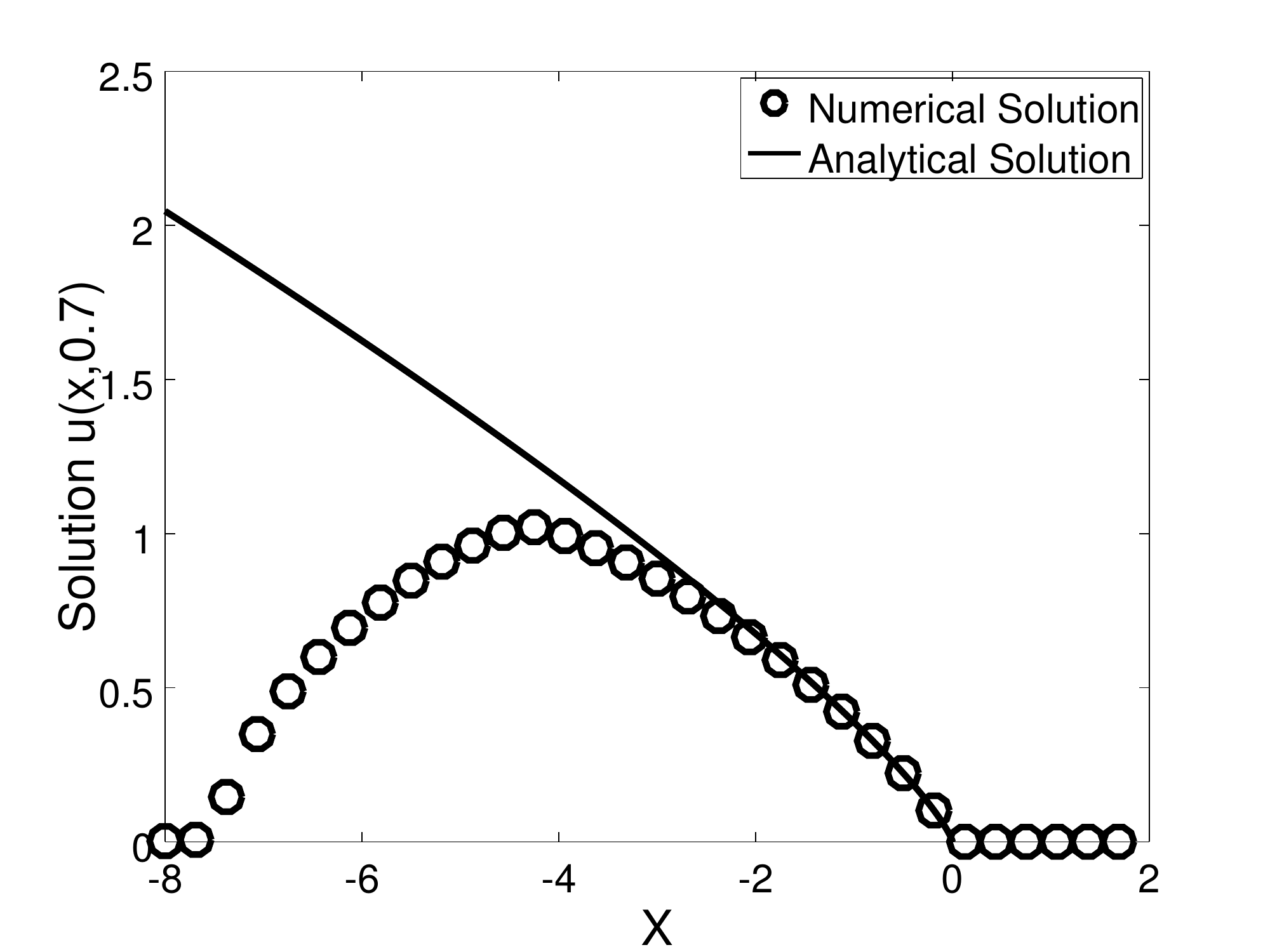}} \\
\caption{Region 4 Left: Numerical solution at different time; Right: Numerical solution at $t=0.7$}
\label{fig:case4a}
\end{figure}
\clearpage

	\bibliographystyle{plain}
	\bibliography{references}

\end{document}